\newtheorem{theorem}{Theorem}[section]
\newtheorem{lemma}[theorem]{Lemma}
\newtheorem{corollary}[theorem]{Corollary}
\theoremstyle{definition}
\newtheorem{remark}[theorem]{Remark}
\newtheorem{example}[theorem]{Example}
\newtheorem{examples}[theorem]{Examples}
\newtheorem{definition}[theorem]{Definition}
\numberwithin{equation}{section}
\numberwithin{equation}{section}
\newcommand{\nd}{{\ensuremath d}} 
\newcommand{\uD}{\mathrm{D}}
\newcommand{\dint}{\mathrm{d}}
\newcommand{\bit}{\begin{itemize}}
\newcommand{\eit}{\end{itemize}}
\newcommand{\beq}{\begin{equation}}
\newcommand{\eeq}{\end{equation}}
\newcommand{\supp}{\mathrm{supp}\,}
\def\ls{\lesssim}
\newcommand{\nat}{\ensuremath{\mathbb{N}}}
\newcommand{\no}{\ensuremath{\nat_0}}
\newcommand{\rr}{\ensuremath{{\mathbb R}}}
\newcommand{\rd}{\ensuremath{{\mathbb R}^\nd}}
\newcommand{\cc}{\ensuremath{{\mathbb C}}}
\newcommand{\zz}{\ensuremath{{\mathbb Z}}}
\newcommand{\zd}{\ensuremath{\mathbb{Z}^\nd}}
\newcommand{\M}{{\mathcal M}_{\varphi,p}}
\newcommand{\Me}{{\mathcal M}_{u_1,p_1}}
\newcommand{\Mz}{{\mathcal M}_{u_2,p_2}}
\newcommand{\MB}{{\mathcal N}^{s}_{\varphi,p,q}}
\newcommand{\MBa}{{\mathcal N}^{s_1}_{\varphi_1,p_1,q_1}}
\newcommand{\MBb}{{\mathcal N}^{s_2}_{\varphi_2,p_2,q_2}}
\newcommand{\MBfa}{{\mathcal N}^{s_1}_{\varphi,p_1,q_1}}
\newcommand{\MBfb}{{\mathcal N}^{s_2}_{\varphi,p_2,q_2}}
\newcommand{\Gp}{{\mathcal G}_p}
\newcommand{\n}{{n}^{s}_{\varphi,p,q}}
\newcommand{\na}{{n}^{s_1}_{\varphi_1,p_1,q_1}}
\newcommand{\nb}{{n}^{s_2}_{\varphi_2,p_2,q_2}}
\newcommand{\nfa}{{n}^{s_1}_{\varphi,p_1,q_1}}
\newcommand{\nfb}{{n}^{s_2}_{\varphi,p_2,q_2}}
\newcommand{\tn}{\tilde{n}^{s}_{\varphi,p,q}}
\newcommand{\tna}{\tilde{n}^{s_1}_{\varphi_1,p_1,q_1}}
\newcommand{\tnb}{\tilde{n}^{s_2}_{\varphi_2,p_2,q_2}}
\newcommand{\whole}[1]{\ensuremath \lfloor #1 \rfloor}
\newcommand{\up}[1]{\ensuremath  \lceil #1 \rceil}
\begin{document}

\title{Wavelet decomposition and embeddings of generalised Besov-Morrey  spaces}
\date{\today}
\author{Dorothee D. Haroske\footnotemark[1],
Susana D. Moura\footnotemark[2],   and Leszek Skrzypczak\footnotemark[1] \footnotemark[3]}
\maketitle

\footnotetext[1]{The first and third author were partially supported by the German Research Foundation (DFG), Grant no. Ha 2794/8-1.}
 \footnotetext[2]{The second author was partially supported by the Centre for Mathematics of the University of Coimbra - UIDB/00324/2020, funded by the Portuguese Government through FCT/MCTES.}
\footnotetext[3]{The third author was partially supported by National Science Center, Poland,  Grant No.
  2013/10/A/ST1/00091.}

\begin{abstract} 
We study embeddings between  generalised Besov-Morrey spaces  $\MB(\rd)$. Both   sufficient and  necessary conditions for the  embeddings are proved. Embeddings of the Besov-Morrey spaces into the Lebesgue spaces $L_r(\rd)$ are also considered. Our approach requires a  wavelet characterisation of the spaces which we establish for the system of  Daubechies wavelets.
\end{abstract}

\medskip

{\bfseries MSC (2010)}: 46E35 \smallskip

{\bfseries Key words}: Besov-Morrey spaces, generalised Morrey spaces,  embeddings, wavelet decompositions

\medskip

\section{Introduction}
In this paper we study smoothness function spaces built upon generalised Morrey spaces $\M(\rd)$, $0<p<\infty$, $\varphi:(0,\infty)\rightarrow [0,\infty)$. The generalised version of Morrey spaces $\mathcal{M}_{u,p}(\rd)$, $0 < p \le u < \infty$, {was} introduced by  T.~Mizuhara \cite{mi} and E.~Nakai \cite{Nak94} in the beginning of the 1990's. The spaces were applied successfully   to PDEs, e.g.  to nondivergence elliptic differential problems, cf. \cite{FHS},  \cite{KMR} or \cite{WNTZ}, to parabolic differential equations \cite{ZJSZ} or Schr\"odinger equations \cite{KNS}.  We refer to \cite{Saw18} for further information about the spaces and the historical remarks. 
 
Also smoothness function spaces built upon Morrey spaces $\mathcal{M}_{u,p}(\rd)$,  in particular Besov-Morrey spaces  $\mathcal{N}^s_{u,p,q}(\rd)$, $0 < p \le u < \infty$, $0 < q \le \infty$, $s \in \rr$, were investigated intensively  in recent years. Yu.V.~Netrusov  was the first who combined the Besov and Morrey norms  cf. \cite{Net}. He considered function spaces on domains and proved some embedding theorem, but 
the further attention paid to the spaces was motivated first of all by possible applications to PDEs.   
The Besov-Morrey spaces $\mathcal{N}^s_{u,p,q}(\rd)$ were introduced by H.~Kozono and M.~Yamazaki in \cite{KY} and used by them to study Navier-Stokes equations. Further applications of the spaces to PDEs can be found e.g. in the  papers written by A.L.~Mazzucato \cite{Maz03}, by  L.C.F.~Ferreira, M.~Postigo \cite{FP} or by M.~Yang, Z.~Fu, J.~Sun, \cite{YFS}. 

Here we study the Besov spaces  $\MB(\rd)$ built upon generalised Morrey spaces. The spaces were introduced and studied by S.~Nakamura, T.~Noi and Y.~Sawano  \cite{NNS16}, cf. also \cite{AGNS}. In particular they proved the  atomic decomposition theorem for the spaces. In the recent paper \cite{IN19} M.~Izuki and T.~Noi investigated the spaces on domains. The generalised Besov-Morrey spaces cover Besov-Morrey spaces and local Besov-Morrey spaces considered by H.~Triebel \cite{Tri13} as special cases. 
Our main aim here is to find the sufficient and necessary conditions for the embeddings
\[ \MBa(\rd)\hookrightarrow \MBb(\rd) .\] 
Our main tools are the atomic decomposition and the wavelet characterisation. This approach allows us to consider first embeddings on the level of sequence spaces, cf. Theorem \ref{main}, and afterwards to transfer the result to function spaces, cf. Theorem \ref{fs}. In particular we regain the characterisation of embeddings of Besov-Morrey spaces $\mathcal{N}^s_{u,p,q}(\rd)$ proved  in  \cite{hs12}.  

The paper is organised as follows. In  Section~\ref{prelim} we present some preliminaries. We  recall definitions and facts needed later on.
In Section~\ref{wavelet} we obtain  the wavelet characterisation of the generalised Besov-Morrey spaces, cf. Theorem \ref{waveletth}.  Section~\ref{emb-seq} deals with the  sequence spaces $\n$ that correspond to $\MB(\rd)$ via the wavelet characterisation theorem. Theorem \ref{main}  contains the sufficient and necessary conditions for the embeddings. In the concluding Section~\ref{emb-func} we transfer the results to the function spaces. We  discuss several concrete examples.

\section{Preliminaries}\label{prelim}

First we fix some notation. By $\nat$ we denote the \emph{set of natural numbers},
by $\no$ the set $\nat \cup \{0\}$,  and by $\zd$ the \emph{set of all lattice points
in $\rd$ having integer components}. Let $\no^\nd$, where $\nd\in\nat$, be the set of all multi-indices, $\alpha = (\alpha_1, \ldots,\alpha_\nd)$ with $\alpha_j\in\no$ and $|\alpha| := \sum_{j=1}^\nd \alpha_j$. If $x=(x_1,\ldots,x_\nd)\in\rd$ and $\alpha = (\alpha_1, \ldots,\alpha_\nd)\in\no^\nd$, then we put $x^\alpha := x_1^{\alpha_1} \cdots x_\nd^{\alpha_\nd}$. 
For $a\in\rr$, let   $\whole{a}:=\max\{k\in\zz: k\leq a\}$, $\up{a} = \min\{k\in \zz:\; k\ge a \}$, and $a_{+}:=\max(a,0)$. 
Given any $u\in (0,\infty]$, it will be denoted by $u'$ the number, possible $\infty$, defined by the expression  $\frac{1}{u'}=(1-\frac{1}{u})_+$; in particular when $1\leq u\leq \infty$, $u'$ is the same as the conjugate exponent defined through $\frac{1}{u}+ \frac{1}{u'}=1$.
All unimportant positive constants will be denoted by $C$, 
occasionally the same letter $C$ is used to denote different constants  in the same chain of inequalities.
 By the notation $A \ls B$, we mean that there exists a positive constant $c$ such that
 $A \le c \,B$, whereas  the symbol $A \sim B$ stands for $A \ls B \ls A$.
 We denote by
 $|\cdot|$ the Lebesgue measure when applied to measurable subsets of $\rd$.
For each cube $Q\subset \rd $ we denote  its side length by $ \ell(Q)$, and, for $a\in (0,\infty)$, we denote by $aQ$ the cube concentric with $Q$ having the side length $a\ell(Q)$. For $x\in\rd$ and $r \in (0, \infty)$  we denote by $Q(x, r)$ the compact cube centred at $x$ with side length $r$, whose sides are parallel to the axes of coordinates. We write simply $Q(r)=Q(0,r)$ when $x=0$.
By $\mathcal{Q}$ we denote the collection of all \emph{dyadic cubes} in $\rd$, namely, $\mathcal{Q}:= \{Q_{j,k}:= 2^{-j}([0,1)^\nd+k):\ j\in\zz,\ k\in\zd\}$. 
Given two (quasi-)Banach spaces $X$ and $Y$, we write $X\hookrightarrow Y$
if $X\subset Y$ and the natural embedding of $X$ into $Y$ is continuous.

Recall first  that the classical  Morrey space
${\mathcal M}_{u,p}(\rd)$, $0<p\leq u<\infty $, is defined to be the set of all
  locally $p$-integrable functions $f\in L_p^{\mathrm{loc}}(\rd)$  such that
$$
\|f \mid {\mathcal M}_{u,p}(\rd)\| :=\, \sup_{Q\in \mathcal{Q}} |Q|^{\frac{1}{u}-\frac{1}{p}}
\left(\int_{Q} |f(y)|^p \dint y \right)^{\frac{1}{p}}\, <\, \infty\, .
$$

In this paper we consider generalised Morrey spaces where the parameter $u$ is replaced by a function $\varphi$ according to the following definition. 

\begin{definition}
Let $0<p<\infty$ and $\varphi:(0,\infty)\rightarrow [0,\infty)$ be a function which does not satisfy $\varphi\equiv 0$. Then $\M(\rd)$ is the set of all 
locally $p$-integrable functions $f\in L_p^{\mathrm{loc}}(\rd)$ for which 
\begin{equation} \label{Morrey-norm}
\|f\mid \M(\rd)\|:=\sup_{Q\in\mathcal{Q}}\varphi \bigl(\ell(Q)\bigr)\biggl(\frac{1}{|Q|}\int_Q |f(y)|^p \dint y\biggr)^{\frac{1}{p}} \, <\, \infty\, .
\end{equation}
\end{definition}

\begin{remark} \label{rmk1}
The above definition goes back to \cite{Nak94}.
When $\varphi(t)=t^{\frac{\nd}{u}}$ for $t>0$ and $0<p\leq u<\infty$, then $\M(\rd)$ coincides with ${\mathcal M}_{u,p}(\rd)$, which in turn recovers the Lebesgue space $L_p(\rd)$ when $u=p$. 
In the definition of $\|\cdot\mid \M(\rd)\|$ balls or all  cubes with sides parallel to the axes of coordinates can be taken. This  change leads to  equivalent quasi-norms. Note that for $\varphi_0\equiv 1$ (which would correspond to $u=\infty$) we obtain 
\begin{equation}\label{M1p}
{\mathcal M}_{\varphi_0,p}(\rd) = L_\infty(\rd),\quad 0<p<\infty,\quad \varphi_0\equiv 1,
\end{equation}
due to Lebesgue's differentiation theorem.  

When $\varphi(t)=t^{-\sigma} \chi_{(0,1)}(t)$ where  $-\frac{\nd}{p}\leq \sigma <0$, then $\M(\rd)$ coincides with the local Morrey spaces $\mathcal{L}^{\sigma}_p(\rd)$ introduced by H.~Triebel in \cite{Tri11}, cf. also \cite[Section~1.3.4]{Tri13}. If $ \sigma=-\frac{\nd}{p}$, then the space is a uniform Lebesgue space $\mathcal{L}_p(\rd)$.
\end{remark}

\bigskip 

For $\M(\rd)$ it is usually required that $\varphi\in \Gp$, where $ \Gp$ 
is the set of all nondecreasing functions $\varphi:(0,\infty)\rightarrow [0,\infty)$ such that $\varphi(t)t^{-\nd/p}$ is a nonincreasing function, i.e.,
$$
1\leq \frac{\varphi(r)}{\varphi(t)}\leq \left(\frac{r}{t}\right)^{\nd/p}, \quad 0<t\leq r<\infty.
$$
A justification for the use of the class $\Gp$ comes from the  lemma below, cf. e.g. \cite[Lemma 2.2]{NNS16}.  One can easily check that $\mathcal{G}_{p_2}\subset \mathcal{G}_{p_1}$ if $0<p_1\le p_2<\infty$.

\begin{lemma}[\cite{NNS16,Saw18}]
Let $0<p<\infty$ and $\varphi:(0,\infty)\rightarrow [0,\infty)$ be a function satisfying $\varphi(t_0)\neq 0$ for some $t_0>0$. 
\begin{itemize}
\item[{\upshape\bfseries (i)}] Then $\M(\rd)\neq \{0\}$ if and only if 
$$
\sup_{t>0} \varphi(t)  \min (t^{-\frac{\nd}{p}},1) < \infty.
$$
\item[{\upshape\bfseries (ii)}] Assume \ $\displaystyle\sup_{t>0} \varphi(t)  \min (t^{-\frac{\nd}{p}},1) < \infty$. Then there exists $\varphi^*\in\Gp$ such that
$$
\M(\rd)={\mathcal M}_{\varphi^*,p}(\rd)
$$
in the sense of equivalent (quasi-)norms.
 \end{itemize}
\end{lemma}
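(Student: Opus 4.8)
The plan is to handle the two parts separately, and throughout to use the equivalence recorded in Remark~\ref{rmk1}: the supremum in \eqref{Morrey-norm} may be taken over \emph{all} axis-parallel cubes rather than only the dyadic ones. This freedom is what makes the arguments below work, since $\varphi$ is a priori neither continuous nor monotone.

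For the sufficiency in part~(i), writing $M:=\sup_{t>0}\varphi(t)\min(t^{-\nd/p},1)<\infty$, I would simply test with $f=\chi_{Q(0,1)}$: for any cube $Q$ one has $\int_Q|f|^p\le\min(\ell(Q)^\nd,1)$, hence $\varphi(\ell(Q))\bigl(|Q|^{-1}\int_Q|f|^p\bigr)^{1/p}\le\varphi(\ell(Q))\min(1,\ell(Q)^{-\nd/p})\le M$, so that $0\ne f\in\M(\rd)$. For the necessity, given $0\ne f\in\M(\rd)$, by Lebesgue's differentiation theorem applied to $|f|^p$ there is a point $x_0$ with $r^{-\nd}\int_{Q(x_0,r)}|f|^p\to|f(x_0)|^p>0$ as $r\to0^+$; testing the quasi-norm against the cubes $Q(x_0,r)$ gives $\varphi(r)\ls\|f\mid\M(\rd)\|$ for all sufficiently small $r\le r_0$, while for $r\ge r_0$ the inclusion $Q(x_0,r_0)\subset Q(x_0,r)$ forces $\varphi(r)r^{-\nd/p}\ls\|f\mid\M(\rd)\|$ and hence also $\varphi(r)\ls r^{\nd/p}\|f\mid\M(\rd)\|$. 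Splitting $\sup_{t>0}\varphi(t)\min(t^{-\nd/p},1)$ over the ranges $t\le r_0$, $r_0\le t\le1$ and $t\ge1$ and using these three bounds gives the asserted finiteness.

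For part~(ii) I would define the majorant explicitly,
\[
\varphi^*(t):=\max\Bigl(\sup_{0<s\le t}\varphi(s),\ t^{\nd/p}\sup_{s\ge t}\varphi(s)\,s^{-\nd/p}\Bigr),\qquad t>0 .
\]
Here $\varphi^*(t)<\infty$ because $\varphi(s)\le M$ for $s\le1$ and $\varphi(s)s^{-\nd/p}\le M$ for $s\ge1$, and $\varphi^*\ge\varphi$ pointwise shows $\varphi^*\not\equiv0$ (indeed $\varphi^*>0$ everywhere, using $\varphi(t_0)>0$ and the second monotonicity below). The two key properties — that $\varphi^*$ is nondecreasing and that $t\mapsto t^{-\nd/p}\varphi^*(t)$ is nonincreasing, so that $\varphi^*\in\Gp$ — follow from a short elementary computation: for $t_1\le t_2$ one splits the supremum defining a given term at the value $t_2$ and estimates each piece by (one of the two terms of) $\varphi^*(t_2)$, and symmetrically for the other monotonicity. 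Since $\varphi\le\varphi^*$ one immediately gets ${\mathcal M}_{\varphi^*,p}(\rd)\hookrightarrow\M(\rd)$; for the reverse inclusion, fix a cube $Q$ with $\ell(Q)=t$ and look at which term realises $\varphi^*(t)$. If it is $\sup_{0<s\le t}\varphi(s)$, choose $s_n\le t$ with $\varphi(s_n)\to\varphi^*(t)$, cover $Q$ by at most $(2t/s_n)^\nd$ cubes of side length $s_n$ and sum the Morrey estimates over them, obtaining $|Q|^{-1}\int_Q|f|^p\le2^\nd\varphi(s_n)^{-p}\|f\mid\M(\rd)\|^p$. If it is $t^{\nd/p}\sup_{s\ge t}\varphi(s)s^{-\nd/p}$, choose $s_n\ge t$ with $\varphi(s_n)s_n^{-\nd/p}\to\varphi^*(t)t^{-\nd/p}$, enlarge $Q$ to a concentric cube of side $s_n$ and apply the Morrey estimate there. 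In either case, letting $n\to\infty$ yields $\varphi^*(t)\bigl(|Q|^{-1}\int_Q|f|^p\bigr)^{1/p}\ls\|f\mid\M(\rd)\|$, hence $\|f\mid{\mathcal M}_{\varphi^*,p}(\rd)\|\ls\|f\mid\M(\rd)\|$, and the two spaces coincide with equivalent quasi-norms.

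The part I expect to be most delicate is the transfer of scale-localised information to all scales: in~(i) the differentiation theorem only gives control near one point and at small radii, and in~(ii) the suprema defining $\varphi^*$ are realised only along sequences $s_n$, so in both cases one must pass from cubes of special size to an arbitrary cube $Q$ — this is exactly where the arbitrary-cube formulation of the Morrey quasi-norm from Remark~\ref{rmk1} is indispensable. The verification that the explicit $\varphi^*$ lies in $\Gp$ is the only genuinely computational step, but it is routine.
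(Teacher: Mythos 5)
The paper itself gives no proof of this lemma --- it is imported from \cite{NNS16,Saw18} --- so there is no internal argument to compare against; your proof is correct and reconstructs what is essentially the standard argument from those references: $\chi_{Q(0,1)}$ and a Lebesgue point of $|f|^p$ for part (i), and for part (ii) the least majorant of $\varphi$ in $\Gp$ together with a covering argument (for scales below $\ell(Q)$) and an enlarging argument (for scales above $\ell(Q)$) to get the reverse norm inequality. All the computational steps check out: the finiteness of $\varphi^*$, the two monotonicity properties obtained by splitting the suprema at $t_2$ (resp.\ $t_1$), the strict positivity $\varphi^*>0$ needed so that $\varphi(s_n)>0$ eventually along the optimising sequences, and the factor $2^\nd$ from covering a cube of side $t$ by $\lceil t/s_n\rceil^\nd$ cubes of side $s_n\le t$. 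The one point you rightly single out is the reliance on the all-cubes formulation of the quasi-norm from Remark~\ref{rmk1}: the dyadic definition \eqref{Morrey-norm} only evaluates $\varphi$ at the scales $2^{-j}$, so for a completely arbitrary $\varphi$ neither the necessity in (i) nor the construction in (ii) could be run on dyadic cubes alone; this equivalence is genuinely part of the set-up (it is how the spaces are defined in the cited sources) rather than something your proof could avoid, and your explicit acknowledgement of it is the honest way to present the argument.
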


  \begin{remark}
    In \cite[Thm.~3.3]{GHLM17} it is shown that for $1\leq p_2\leq p_1<\infty$, $\varphi_i\in \mathcal{G}_{p_i}$, $i=1,2$, then
    \[ {\mathcal M}_{\varphi_1, p_1}(\rd) \hookrightarrow {\mathcal M}_{\varphi_2, p_2}(\rd)\]
    if and only if there exists some $C>0$ such that for all $t>0$, $\varphi_1(t)\leq C \varphi_2(t)$. The argument can be immediately extended to $0<p_2\leq p_1<\infty$.
    
In case of $\varphi_i(t)=t^{d/u_i}$, $0<p_i\leq u_i<\infty$, $i=1,2$, it is well-known that
\[
\Me(\rd) \hookrightarrow \Mz(\rd)\qquad\text{if and only if}\qquad
p_2\leq p_1\leq u_1=u_2, 
\]
cf. \cite{piccinini-1} and \cite{rosenthal}.
\end{remark}

We consider the following examples.

\begin{examples}\label{exm-Gp}
\begin{enumerate}[\bfseries (i)]
\item The function 
\begin{equation}\label{example1} \varphi_{u,v}(t)=
\begin{cases}
t^{\nd/u} & \text{if}\qquad t\le 1,\\ 
t^{\nd/v} & \text{if}\qquad t >1, 
\end{cases}
\end{equation}
with $0<u,v<\infty$ belongs to $\Gp$ with $p=\min(u,v)$. In particular, taking $u=v$, the
function $\varphi(t)=t^{\frac{\nd}{u}}$ belongs to $\mathcal{G}_p$ whenever  $0<p\leq u<\infty$. 
\item The function  $\varphi(t)=\max(t^{\nd/v},1)$ belongs to $\mathcal{G}_v$. It corresponds to \eqref{example1} with $u=\infty$.
\item The function  $\varphi(t)=\sup\{s^{\nd/u}\chi_{(0,1)}(s): s\le t\}= \min(t^{\nd/u},1)$ defines an equivalent (quasi)-norm in $\mathcal{L}^{\sigma}_p(\rd)$, $\sigma= -\frac{\nd}{u}$, $p\le u$. The function $\varphi$  belongs to $\mathcal{G}_u\subset \mathcal{G}_p$. It corresponds to \eqref{example1} with $v=\infty$. 
\item  The   function $\varphi(t)=t^{\nd/u}(\log (L+t))^a$, with $L$ being a sufficiently large constant, belongs to $\mathcal{G}_u$ if  $0< u<\infty$ and $a\leq 0$. 
\end{enumerate}
\smallskip
Other examples   can be found e.g. in \cite[Ex.~3.15]{Saw18}.
\end{examples}

Let $\mathcal{S}(\rd)$ be the set of all Schwartz functions on $\rd$, endowed
with the usual topology,
and denote by $\mathcal{S}'(\rd)$ its topological dual, namely,
the space of all bounded linear functionals on $\mathcal{S}(\rd)$
endowed with the weak $\ast$-topology.
For all $f\in \mathcal{S}(\rd)$ or $f\in\mathcal{S}'(\rd)$, we
use $\mathcal{F} f $ to denote its Fourier transform, and $\mathcal{F}^{-1}f$ for its inverse.
Now let us define the generalised Besov-Morrey spaces introduced in \cite{NNS16}.

Let $\eta_0,\eta\in \mathcal{S}(\rd)$ be nonnegative compactly supported functions satisfying
\begin{equation*}
\eta_0(x)>0 \quad \text{if}\quad x \in Q(2),
\end{equation*}
\begin{equation*}
0\notin \supp \eta\quad \text{and} \quad \eta(x)>0 \quad \text{if}\quad x \in Q(2) \setminus Q(1).
\end{equation*}
For $j\in\nat$, let $\eta_j(x):=\eta(2^{-j}x)$,  $x\in\rd$.

\begin{definition} \label{def-spaces} 
Let $0<p<\infty$, $0<q\leq \infty$, $s\in \rr$, and $\varphi\in \Gp$. 
The  generalised Besov-Morrey   space
  $\MB(\rd)$ is defined to be the set of all  $f\in\mathcal{S}'(\rd)$ such that
\begin{align*}
\big\|f\mid \MB(\rd)\big\|:=
\bigg(\sum_{j=0}^{\infty}2^{jsq}\big\| \mathcal{F}^{-1}(\eta_j  \mathcal{F} f)\mid
\M(\rd)\big\|^q \bigg)^{1/q} < \infty,
\end{align*}
with the usual modification made in case of $q=\infty$.
\end{definition}

\begin{remark}\label{rem-coinc}
 The above spaces have been introduced in \cite{NNS16}. There the authors have proved that those spaces are independent of the choice of the functions $\eta_0$ and $\eta$ considered in the definition, as different choices lead to equivalent quasi-norms, cf.~\cite[Thm~1.4]{NNS16}. 
 
  When $\varphi(t)=t^{\frac{\nd}{u}}$ for $t>0$ and $0<p\leq u<\infty$, then 
$$
\MB(\rd)={\mathcal N}^s_{u,p,q}(\rd) 
$$
are the usual Besov-Morrey, 
which are studied in \cite{YSY10} or in the recent survey papers by W.~Sickel \cite{s011,s011a}.  
Of course, we can recover the  classical Besov spaces $B^s_{p,q}(\rd)$ 
for any $0<p<\infty$, $0<q\leq \infty$, and $s\in \rr$, since 
$$
B^s_{p,q}(\rd)={\mathcal N}^s_{p,p,q}(\rd) . 
$$
 When  $\varphi(t)=\min(t^\frac{\nd}{u},1)$, then we recover  the local Besov-Morrey spaces introduced by H.~Triebel, 
\[ \MB(\rd) = B^s_q(\mathcal{L}^\sigma_p, \rd), \quad\sigma= -\frac{\nd}{u}, \quad p\le u, \] 
cf. \cite[Section~1.3.4]{Tri13}.

Besides the elementary embeddings
$$\mathcal  N^{s+\varepsilon}_{\varphi,p,q_1} (\rd)\hookrightarrow
\mathcal  N^{s}_{\varphi,p,q_2}(\rd), \quad \varepsilon>0,$$
and 
$$\mathcal  N^{s}_{\varphi,p,q_1} (\rd) \hookrightarrow \mathcal  N^{s}_{\varphi,p,q_2}(\rd) , \quad q_1\leq q_2,$$
cf. \cite[Prop.~3.3]{NNS16}, we can  also 
prove that 
$$
\mathcal{N}^0_{\varphi,p,\min\{p,2\}} (\rd)\hookrightarrow \, \M(\rd)\, \hookrightarrow \,{\cal N}^0_{\varphi,p,\infty} (\rd)\qquad \text{if}\qquad 1<p<\infty, 
$$
when $\varphi$ satisfies the additional condition
$$
c\left(\frac{r}{t}\right)^{\varepsilon}\leq \frac{\varphi(r)}{\varphi(t)}, \quad 0<t\leq r<\infty,
$$
for some constants $\varepsilon>0$ and $c>0$.
This is a consequence of  Corollary 6.17 of \cite{NNS16}.
\end{remark}

\subsubsection*{The  atomic decomposition}

An important tool in our later considerations is the characterisation of the generalised Besov-Morrey spaces by means of atomic decompositions.
We follow \cite{NNS16} and start by defining the appropriate sequence spaces and atoms.

\begin{definition}\label{def-seq-spaces} 
Let $0<p<\infty$, $0<q\leq \infty$, $s\in \rr$, and $\varphi\in \Gp$. 
The  generalised Besov-Morrey sequence space $\n(\rd)$ is the set of all double-indexed sequences 
$\lambda:=\{\lambda_{j,m}\}_{j\in\no,m\in\zd}\subset \cc$ for which the quasi-norm 
\begin{equation} \label{seq-besov}
\| \lambda\mid \n\|:= \biggl(\sum_{j=0}^{\infty}2^{jsq}   \Big\| \sum_{m\in\zd} \lambda_{j,m} \chi_{Q_{j,m}} \,\Big|\, \M(\rd) \Big\|^q \biggr)^{1/q} 
\end{equation}
is finite (with the usual modification if $q=\infty$).
\end{definition}
\smallskip

\begin{remark} \label{seq-besov-rem} When $\varphi(t)=t^{\frac{\nd}{u}}$ for $t>0$ and $0<p\leq u<\infty$, then 
$$ 
\n(\rd)={n}^s_{u,p,q}(\rd) 
$$
are the usual Besov-Morrey sequence spaces. 
Moreover if $u=p$, then the space $\n(\rd)$ coincides with a classical Besov sequence space  $b^s_{p,q}(\rd)$ since $\M(\rd)=L_p(\rd)$ in that case. 
\end{remark} 

\smallskip

\begin{definition}\label{def-atoms} 
Let $L\in\no\cup\{-1\}$, $K\in\no$, and $c>1$. 
A $C^K$-function $a:\rd\rightarrow \cc$ is said to be a $(K,L,c)$-atom centered at  $Q_{j,m}$, where $j\in\no$ and $m\in\zd$, if 
\begin{equation}
2^{-j|\alpha|} |\uD^{\alpha}a(x)| \leq \chi_{cQ_{j,m}}(x)
\end{equation}
 for all  $x\in\rd$  and for all  $\alpha\in \no^\nd$ with $|\alpha|\leq K$, and when 
\begin{equation}  
   \int_{\rd} x^{\beta} a(x) \dint x=0,
\end{equation}
for all  $\beta\in \no^\nd$ with $|\beta|\leq L$ when $L\geq 0$.
In the sequel we write $a_{j,m}$ instead of $a$ if the atom is located at  $Q_{j,m}$, i.e.,  $\supp a_{j,m}\subset cQ_{j,m}$.
\end{definition}

We use the notation
\[
\sigma_p:=\nd\left(\frac{1}{\min (1,p)}-1\right),\quad 0<p\leq\infty,
\]
in the sequel. 
The following result coincides with \cite[Thm.~4.4]{NNS16}, cf. also \cite[Rmk.~4.3]{NNS16}, see also \cite[Thm.~10.15]{LSUYY13}.

\begin{theorem}  \label{atomic-decomposition}
Let $0<p<\infty$, $0<q\leq \infty$, $s\in \rr$, and $\varphi\in \Gp$. 
Let also $c>1$, $L\in\no\cup\{-1\}$ and $K\in\no$ be such that
$$
K\geq \whole{1+s}_+ \quad  \text{and} \quad L\geq  \max(-1,\whole{\sigma_p-s}) .
$$
\begin{itemize}
\item[{\upshape\bfseries (i)}] Let $f\in\MB(\rd)$. Then there exists a family $\{a_{j,m}\}_{j\in\no, m\in\zd}$ of $(K,L,c)$-atoms and a sequence $\lambda=\{\lambda_{j,m}\}_{j\in\no,m\in\zd}\in\n(\rd)$ such that
$$
f=\sum_{j=0}^{\infty}\sum_{m\in\zd} \lambda_{j,m} a_{j,m} \qquad \text{in }\quad {\mathcal S}'(\rd)
$$
and 
$$
\|\lambda\mid \n(\rd)\|\lesssim \|f\mid \MB(\rd)\|.
$$
\item[{\upshape\bfseries (ii)}] Let $\{a_{j,m}\}_{j\in\no, m\in\zd}$ be a family of $(K,L,c)$-atoms and $\lambda=\{\lambda_{j,m}\}_{j\in\no,m\in\zd}\in\n(\rd)$. Then 
$$
f=\sum_{j=0}^{\infty}\sum_{m\in\zd} \lambda_{j,m} a_{j,m} 
$$
converges in ${\mathcal S}'(\rd)$ and belongs to $\MB(\rd)$. Furthermore
$$
\|f\mid \MB(\rd)\| \lesssim \|\lambda\mid \n(\rd)\|.
$$
\end{itemize}
\end{theorem}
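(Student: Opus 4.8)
The plan is to run the classical Frazier--Jawerth scheme for atomic decompositions, adapted to the Morrey scale; the one genuinely non-Euclidean ingredient is the boundedness on $\M(\rd)$ of the powered Hardy--Littlewood maximal operator $M_\theta h:=\bigl(M(|h|^\theta)\bigr)^{1/\theta}$ with $0<\theta<\min(1,p)$, which holds for $\varphi\in\Gp$ (via $\varphi^\theta\in\mathcal G_{p/\theta}$ and $p/\theta>1$ one reduces to a Nakai-type criterion for $\mathcal M_{\varphi^\theta,p/\theta}(\rd)$; cf.\ \cite{NNS16}). By the independence of the quasi-norm of the admissible generating pair (\cite[Thm.~1.4]{NNS16}) I may assume $\{\eta_\nu\}_{\nu\in\no}$ to be a smooth dyadic resolution of unity, $\sum_{\nu\in\no}\eta_\nu\equiv1$, and I write $f_\nu:=\mathcal F^{-1}(\eta_\nu\mathcal Ff)$.

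For part (ii) I would first check that $f=\sum_{j,m}\lambda_{j,m}a_{j,m}$ converges in $\mathcal S'(\rd)$ by pairing with $\psi\in\mathcal S(\rd)$, using the Taylor polynomial of $\psi$ of degree $L$ against the vanishing moments of the high-frequency atoms and absorbing the $m$-sum into $\|\lambda\mid\n(\rd)\|<\infty$. The core is the pointwise bound
\[ \bigl|\mathcal F^{-1}(\eta_\nu\mathcal Ff)(x)\bigr|\;\ls\;\sum_{j\in\no}2^{-|\nu-j|N}(M_\theta g_j)(x),\qquad g_j:=\sum_{m\in\zd}|\lambda_{j,m}|\chi_{Q_{j,m}},\]
valid for any prescribed $N$ once $K,L$ are large enough: for $j\le\nu$ one Taylor-expands $a_{j,m}$ to order $K-1$ and uses that $\mathcal F^{-1}\eta_\nu$ has, for $\nu\ge1$, vanishing moments of all orders, gaining $2^{-(\nu-j)K}$ (this is where $K\ge\whole{1+s}_+$ enters); for $j>\nu$ one uses the vanishing moments of $a_{j,m}$ against the Taylor polynomial of the rescaled kernel, gaining $2^{-(j-\nu)(L+1)}$, while $\supp a_{j,m}\subset cQ_{j,m}$ and the Schwartz decay of the kernel give, after summing in $m$, a majorant $\ls(M_\theta g_j)(x)$ --- it is here that $M_\theta$ rather than $M$ must be used when $p\le1$, forcing $L\ge\max(-1,\whole{\sigma_p-s})$. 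Taking $\M(\rd)$-norms in the display, using boundedness of $M_\theta$ on $\M(\rd)$, multiplying by $2^{\nu s}$ and applying the $\ell_q(\no)$-quasi-norm together with the discrete Young inequality (valid once $N>|s|$) yields $\|f\mid\MB(\rd)\|\ls\|\lambda\mid\n(\rd)\|$.

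For part (i) I would use $f=\sum_\nu f_\nu$ in $\mathcal S'(\rd)$, each $f_\nu$ band-limited, and localise by a smooth partition of unity $\{\kappa(2^\nu\cdot-m)\}_{m\in\zd}$ at scale $2^{-\nu}$, putting $\lambda_{\nu,m}$ proportional to $\sup_{cQ_{\nu,m}}|f_\nu|$, suitably normalised, and $a_{\nu,m}:=\lambda_{\nu,m}^{-1}\kappa(2^\nu\cdot-m)f_\nu$. Bernstein's inequality for band-limited functions supplies the derivative bounds in Definition~\ref{def-atoms} with any prescribed $K$; the support property $0\notin\supp\eta_\nu$ for $\nu\ge1$ gives the moment conditions up to any prescribed $L$, while for $\nu=0$ one takes $L=-1$. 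The pointwise bound $\sup_{cQ_{\nu,m}}|f_\nu|\ls(M_\theta f_\nu)(x)$ for $x\in Q_{\nu,m}$ yields $\sum_m|\lambda_{\nu,m}|\chi_{Q_{\nu,m}}\ls M_\theta f_\nu$ pointwise; taking $\M(\rd)$-norms, using again boundedness of $M_\theta$ on $\M(\rd)$ and the Nikol'skij-type control $\|f_\nu\mid\M(\rd)\|\ls\sum_{|\mu-\nu|\le1,\,\mu\in\no}\|\mathcal F^{-1}(\eta_\mu\mathcal Ff)\mid\M(\rd)\|$, and finally the $\ell_q(\no)$-quasi-norm with weight $2^{\nu s}$, gives $\|\lambda\mid\n(\rd)\|\ls\|f\mid\MB(\rd)\|$; the identity $f=\sum_{\nu,m}\lambda_{\nu,m}a_{\nu,m}$ holds by construction.

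I expect the main obstacle to be the numerology in the kernel estimate of part (ii): one has to balance the decay $2^{-|\nu-j|N}$ against both the weight $2^{\nu s}$ and the loss $2^{j\nd(1/\theta-1)}$ incurred in passing from $M$ to $M_\theta$ when $p\le1$, and it is precisely this that produces the thresholds $K\ge\whole{1+s}_+$ and $L\ge\max(-1,\whole{\sigma_p-s})$. The more structural point is the maximal inequality itself: it is the class $\Gp$ (with $\theta<p$) that makes $M_\theta$ bounded on $\M(\rd)$, so the hypothesis $\varphi\in\Gp$ is used essentially in both directions.
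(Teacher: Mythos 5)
The paper does not prove this theorem itself: it is quoted verbatim from \cite[Thm.~4.4]{NNS16} (see also \cite[Thm.~10.15]{LSUYY13}), so there is no in-paper argument to compare against. Your sketch follows the standard Frazier--Jawerth/local-means route that underlies those references, and the key structural points you identify are the right ones: the boundedness of $M_\theta$ on $\M(\rd)$ for $\theta<\min(1,p)$ via $\varphi^\theta\in\mathcal G_{p/\theta}$, and the two-sided kernel estimates trading $K$ against $2^{-(\nu-j)K}$ and $L$ against $2^{-(j-\nu)(L+1)}$. Part (ii) of your proposal is a sound sketch of the standard argument.

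Part (i), however, contains a genuine gap. You define $a_{\nu,m}:=\lambda_{\nu,m}^{-1}\kappa(2^\nu\cdot-m)f_\nu$ and claim that the moment conditions ``up to any prescribed $L$'' follow from $0\notin\supp\eta_\nu$. That support property gives $\int x^\beta f_\nu(x)\,\dint x=0$ for the \emph{whole} band-limited piece $f_\nu$, but multiplication by the cutoff $\kappa(2^\nu\cdot-m)$ destroys these cancellations: in general $\int x^\beta\,\kappa(2^\nu x-m)f_\nu(x)\,\dint x\neq 0$. So whenever the theorem forces $L\geq 0$ (i.e.\ $s\leq\sigma_p$), your localized pieces are not $(K,L,c)$-atoms and the decomposition in (i) is not established. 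The standard repair is to avoid the naive partition of unity and instead use a Calder\'on reproducing formula with \emph{compactly supported} kernels $k_\nu$ having vanishing moments up to order $L$ (Triebel's local means), writing $f=\sum_\nu k_\nu\ast(\tilde k_\nu\ast f)$ and setting $a_{\nu,m}(x)\propto\int_{Q_{\nu,m}}k_\nu(x-y)(\tilde k_\nu\ast f)(y)\,\dint y$; these inherit both the support in $cQ_{\nu,m}$ and the moment conditions from $k_\nu$, and the coefficient estimate then goes through with $M_\theta$ exactly as you describe. With that substitution your outline matches the proof in \cite{NNS16}; without it, part (i) does not produce admissible atoms.
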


{The next lemma will be useful in the sequel and shows that the sequence spaces $\n$ can be defined through a more convenient  equivalent norm, extending the result for  $n^s_{u,p,q}$ from \cite[Prop.~3.1]{hs12}.

\begin{lemma} \label{equiv-norm}
Let $0<p<\infty$, $0<q\leq \infty$, $s\in \rr$, and $\varphi\in \Gp$. Then 
$$
\n(\rd)= \{\lambda=\{\lambda_{j,m}\}_{j,m}:  \| \lambda\mid \n\|^*<\infty \}
$$
where
$$
\| \lambda\mid \n\|^*:=\Bigg(\sum_{j=0}^\infty 2^{jsq} 
\mathop{\sup_{\nu: \nu \leq j}}_{k\in \zd}\!  \varphi(2^{-\nu})^q \,2^{(\nu-j)\frac{\nd}{p} q}\Big(\!\! \mathop{\sum_{m\in\zd:}}_{Q_{j,m}\subset Q_{\nu,k}}\!\!|\lambda_{j,m}|^p\Big)^{\frac q p}\Bigg)^{1/q}
$$
with the usual modification if $q=\infty$.
\end{lemma}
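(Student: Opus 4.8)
The plan is to reduce the claim to an explicit computation of the Morrey quasi-norm of a function that is constant on the dyadic cubes of one fixed generation. First I would fix $j\in\no$ and consider $f_j:=\sum_{m\in\zd}\lambda_{j,m}\chi_{Q_{j,m}}$; since $\|\cdot\mid\M(\rd)\|$ is by definition a supremum over the dyadic cubes $Q\in\mathcal{Q}$, I would split this supremum into the contribution of the "large" cubes $Q=Q_{\nu,k}$ with $\nu\le j$ and that of the "small" cubes $Q=Q_{\nu,k}$ with $\nu>j$.

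For a large cube $Q_{\nu,k}$, $\nu\le j$, it is the disjoint union of the level-$j$ cubes $Q_{j,m}$ it contains, on each of which $f_j$ is constant equal to $\lambda_{j,m}$, so that, using $|Q_{j,m}|=2^{-j\nd}$ and $|Q_{\nu,k}|=2^{-\nu\nd}$,
$$
\varphi\bigl(\ell(Q_{\nu,k})\bigr)\Bigl(\frac{1}{|Q_{\nu,k}|}\int_{Q_{\nu,k}}|f_j(y)|^p\,\dint y\Bigr)^{1/p}
=\varphi(2^{-\nu})\,2^{(\nu-j)\frac{\nd}{p}}\Bigl(\mathop{\sum_{m\in\zd:}}_{Q_{j,m}\subset Q_{\nu,k}}|\lambda_{j,m}|^p\Bigr)^{1/p},
$$
and the supremum of the right-hand side over all $\nu\le j$ and $k\in\zd$ is precisely the inner term occurring in $\|\lambda\mid\n\|^*$. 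For a small cube $Q_{\nu,k}$, $\nu>j$, nesting of dyadic cubes gives $Q_{\nu,k}\subset Q_{j,m}$ for a single $m$, hence $f_j\equiv\lambda_{j,m}$ on $Q_{\nu,k}$ and the corresponding term equals $\varphi(2^{-\nu})\,|\lambda_{j,m}|$; since $\varphi\in\Gp$ is nondecreasing and $2^{-\nu}<2^{-j}$, this is at most $\varphi(2^{-j})\,|\lambda_{j,m}|$, which is exactly the value of the large-cube expression for $\nu=j$, $k=m$. Thus the small cubes never increase the supremum, and I obtain the identity
$$
\Bigl\|\,\sum_{m\in\zd}\lambda_{j,m}\chi_{Q_{j,m}}\,\Big|\,\M(\rd)\Bigr\|
=\mathop{\sup_{\nu:\,\nu\le j}}_{k\in\zd}\varphi(2^{-\nu})\,2^{(\nu-j)\frac{\nd}{p}}\Bigl(\mathop{\sum_{m\in\zd:}}_{Q_{j,m}\subset Q_{\nu,k}}|\lambda_{j,m}|^p\Bigr)^{1/p}.
$$

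It then suffices to raise this identity to the power $q$, multiply by $2^{jsq}$ and sum over $j\in\no$ (with the usual modification for $q=\infty$, replacing the outer sum by a supremum), which yields $\|\lambda\mid\n\|=\|\lambda\mid\n\|^*$; in particular the two sequence spaces are literally equal. I do not expect a real obstacle here: the argument is just bookkeeping about which dyadic cubes lie above or below generation $j$, the only points to be careful about being the observation that the small cubes are harmless precisely because $\varphi$ is nondecreasing, and the routine handling of the $q=\infty$ case.
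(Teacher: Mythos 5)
Your argument is correct and is essentially identical to the paper's own proof: both compute $\bigl\|\sum_m\lambda_{j,m}\chi_{Q_{j,m}}\mid\M(\rd)\bigr\|$ by splitting the supremum over dyadic cubes into the generations $\nu\le j$ (exact computation) and $\nu>j$ (dominated by the $\nu=j$ term via monotonicity of $\varphi$), and then sum over $j$. No issues.
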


\begin{proof}
For each $j\in\no$  we calculate the quasi-norm
$$
 \Big\| \sum_{m\in\zd} \lambda_{j,m} \chi_{Q_{j,m}} \,\Big|\, \M(\rd) \Big\|.
$$
Let $Q=Q_{\nu, k}$, $\nu\in\zz$, $k\in\zd$, be a dyadic cube. If $j\geq \nu$, then
\begin{equation} \label{seq:1}
 \varphi \bigl(\ell(Q)\bigr)\biggl(\frac{1}{|Q|} \int_Q  \Bigl| \sum_{m\in\zd} \lambda_{j,m} \chi_{Q_{j,m}}(x)\Big|^p \ \dint x\biggr)^{1/p} 
 = \varphi (2^{-\nu})  2^{(\nu-j)\frac{\nd}{p}}\biggl(\mathop{\sum_{m\in\zd:}}_{Q_{j,m}\subset Q_{\nu, k}}  
 | \lambda_{j,m}|^p \biggr)^{1/p} .
\end{equation}
If $j<\nu$, there exists only one $m_0\in\zd$ such that $Q=Q_{\nu, k}\subset Q_{j, m_0}$, and, moreover, since $\varphi$ is nondecreasing, we obtain
\begin{equation} \label{seq:2}
 \varphi \bigl(\ell(Q)\bigr)\biggl(\frac{1}{|Q|} \int_Q  \Bigl| \sum_{m\in\zd} \lambda_{j,m} \chi_{Q_{j,m}}(x)\Big|^p \dint x\biggr)^{1/p} 
=  \varphi (2^{-\nu})  
\biggl(\frac{1}{|Q|} \int_Q |\lambda_{j,m_0}|^p \dint x\biggr)^{1/p} 
 \leq   \varphi (2^{-j})  |\lambda_{j,m_0}|.
\end{equation}
From \eqref{seq:1} and  \eqref{seq:2} we immediately have 
$$
 \Big\| \sum_{m\in\zd} \lambda_{j,m} \chi_{Q_{j,m}} \,\Big|\, \M(\rd) \Big\| 
 \leq   \mathop{\sup_{\nu: \nu \leq j}}_{k\in \zd} \varphi (2^{-\nu})  2^{(\nu-j)\frac{\nd}{p}}\biggl(\mathop{\sum_{m\in\zd:}}_{Q_{j,m}\subset Q_{\nu, k}}  
 | \lambda_{j,m}|^p \biggr)^{1/p}. 
$$
The reverse inequality is clear, from the definition of $\|\cdot\mid \M(\rd)\|$  and  \eqref{seq:1}. Therefore 
\begin{equation} \label{seq:3}
 \Big\| \sum_{m\in\zd} \lambda_{j,m} \chi_{Q_{j,m}} \,\Big|\, \M(\rd) \Big\| = \mathop{\sup_{\nu: \nu \leq j}}_{k\in \zd} \varphi (2^{-\nu})  2^{(\nu-j)\frac{\nd}{p}}\biggl(\mathop{\sum_{m\in\zd:}}_{Q_{j,m}\subset Q_{\nu, k}}   | \lambda_{j,m}|^p \biggr)^{1/p}.
\end{equation}
The result follows from \eqref{seq:3} taking into account \eqref{seq-besov}.
\end{proof}
}

\section{The  wavelet characterisation}\label{wavelet}
We assume that the reader is familiar with the basic notation and  assertions of the wavelet theory. There is a variety of excellent books that present  general background material on wavelets, we can refer, in particular, to \cite{Dau2}, \cite{HW96} and \cite{Woj}. We will follow the approach presented in \cite{HST} and consider here the compactly supported Daubechies wavelets.

Let $L\in \mathbb{N}$ and let   $\psi_F,\psi_M\in C^L(\mathbb{R})$ are real-valued compactly supported ($L^2$-normalised) functions with
\begin{equation}
\int_\rr \psi_F^2(t) \dint t =1 ,\qquad \int_\rr \psi_M(t)t^\ell \dint t = 0, \qquad \ell< L. 
\end{equation} 
The function  $\psi_F$ is called   scaling function (or father wavelet) and  $\psi_M $ is called  an associated  function (mother wavelet).

Let $G=(G_1,...,G_\nd)\in G^*=\left\{F,M\right\}^{\nd *}$, where $^*$ indicates that at least one of the components of $G$ must be an $M$. 
Then we set 
\begin{equation}\label{eq2:1}
\psi_{j,m}^G = 2^{j\nd/2} \prod_{r=1}^\nd\psi^{G_r}(2^jx_r-m_r), \qquad \psi_m(x)= \prod_{r=1}^\nd\psi_{F}(x_r-m_r),
\end{equation}
where $j\in\mathbb{N}_0$, $m\in\mathbb{Z}^\nd$, $G\in G^*$.  The family $\{\psi_m,\psi_{j,m}^G: j\in\mathbb{N}_0, \ m\in\zd,\ G\in G^*\}$  is called a (Daubechies) wavelet system.

We will need the following modified version $\tn(\rd)$  of $\n(\rd)$ spaces. The space $\tn(\rd) $ collects all sequences 
\[
\lambda= \left\{ \lambda_m\in \mathbb{C}, \lambda_{j,m}^G \in \mathbb{C} : \; m\in \zd, \; j \in \mathbb{N}_0,\; G\in G^* \right\}
\]
quasi-normed by 
\begin{equation}\label{norm-tn}
\| \lambda\mid \tn(\rd)\|:= \Big\| \sum_{m\in\zd} \lambda_{m} \chi_{Q_{m}} \,\Big|\, \M(\rd) \Big\|  + \sum_{G\in G^*} \Big\|
\{\lambda_{j,m}^G\}| \n(\rd)\Big\| . 
\end{equation}

\begin{theorem}\label{waveletth}
Let $0<p<\infty$, $0<q\leq \infty$, $s\in \rr$, and $\varphi\in \Gp$.  For the wavelets defined in \eqref{eq2:1} we take
\begin{equation}\label{L-cond-wave}
L> \max\{ \lfloor 1+s\rfloor_+, \frac{\nd}{p} -s\}.
\end{equation}
Let $f\in \mathcal{S}'(\rd)$. Then $f\in \MB(\rd)$ if and only if it can be represented as
\begin{equation*}
f=  \sum_{m\in \zd}\lambda_{m}\psi_{m} \;+\;\sum_{G\in G^*}\sum_{j\in \mathbb{N}_0} \sum_{m\in \zd}\lambda_{jm}^G2^{-j\nd/2}\psi_{j,m}^G,\qquad \lambda\in \tn(\rd),
\end{equation*}
unconditional convergence being in $\mathcal{S}'(\rd)$. 
The representation is unique with
\begin{equation*}
\lambda_{j,m}^G = \lambda_{j,m}^G(f)=  2^{j\nd/2}\left(f,\psi_{j,m}^G\right)\quad\text{and}\quad \lambda_{m} = \lambda_{m}(f)=  \left(f,\psi_{m}\right),
\end{equation*}
and
\begin{equation*}
I:f\mapsto \{\lambda_{m}(f), 2^{j\nd/2}(f,\psi^G_{j,m})\}
\end{equation*}
is a linear isomorphism of $\MB(\rd)$ onto $\tn(\rd)$.
\end{theorem}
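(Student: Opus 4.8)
The plan is to reduce the wavelet characterisation to the already-established atomic decomposition, Theorem~\ref{atomic-decomposition}, by observing that suitably normalised Daubechies wavelets \emph{are} admissible atoms in the sense of Definition~\ref{def-atoms}, and that the decomposition operator $I$ maps tempered distributions in $\MB(\rd)$ to sequences with controlled norm. First I would verify the atom conditions. Fix the decay/smoothness parameters $K$ and the moment parameter $L$ dictated by Theorem~\ref{atomic-decomposition}, namely $K\geq\whole{1+s}_+$ and $L\geq\max(-1,\whole{\sigma_p-s})$; the wavelet hypothesis \eqref{L-cond-wave}, $L>\max\{\whole{1+s}_+,\tfrac{\nd}{p}-s\}$, is precisely what guarantees that the Daubechies functions $2^{-j\nd/2}\psi^G_{j,m}$, after multiplication by a fixed constant, satisfy $2^{-j|\alpha|}|\uD^\alpha a(x)|\le\chi_{cQ_{j,m}}(x)$ for $|\alpha|\le K$ (using $\psi^G\in C^L$ and compact support) and the moment conditions $\int x^\beta a\,\ud x=0$ for $|\beta|\le L$ (using the vanishing moments of $\psi_M$ and the product structure, since every $G\in G^*$ has at least one $M$-component). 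The $\psi_m$ play the role of the $j=0$ atoms with no moment conditions required. Thus any $\lambda\in\tn(\rd)$ produces, via $f=\sum_m\lambda_m\psi_m+\sum_{G}\sum_{j}\sum_m\lambda^G_{j,m}2^{-j\nd/2}\psi^G_{j,m}$, a function in $\MB(\rd)$ with $\|f\mid\MB(\rd)\|\lesssim\|\lambda\mid\tn(\rd)\|$, by applying part~(ii) of Theorem~\ref{atomic-decomposition} to each $G$ separately (there are finitely many $G\in G^*$) and summing; convergence in $\cs'(\rd)$ is inherited from the atomic statement, and unconditionality follows since the sum over $m$ for fixed $(j,G)$ is locally finite and the $j$-sum converges absolutely in the relevant quasi-norm.

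Next I would prove the converse: if $f\in\MB(\rd)$ then the coefficients $\lambda_m(f)=(f,\psi_m)$ and $\lambda^G_{j,m}(f)=2^{j\nd/2}(f,\psi^G_{j,m})$ are well defined and $\|I f\mid\tn(\rd)\|\lesssim\|f\mid\MB(\rd)\|$. For this I would start from \emph{some} atomic decomposition $f=\sum_{\nu,k}\mu_{\nu,k}b_{\nu,k}$ with $\|\mu\mid\n(\rd)\|\lesssim\|f\mid\MB(\rd)\|$ furnished by part~(i) of Theorem~\ref{atomic-decomposition}, insert it into the duality pairing $(f,\psi^G_{j,m})$, and estimate the resulting double series $\sum_{\nu,k}\mu_{\nu,k}2^{j\nd/2}(b_{\nu,k},\psi^G_{j,m})$. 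The kernel $2^{j\nd/2}(b_{\nu,k},\psi^G_{j,m})$ enjoys the standard almost-diagonal decay: using the smoothness of the wavelet against the atom (when $\nu\le j$) and the moment conditions of the atom against the wavelet (when $\nu>j$), one gets a bound of the form $2^{-|j-\nu|N}$ times a factor reflecting the spatial separation of $Q_{j,m}$ and $Q_{\nu,k}$, with $N$ as large as the chosen $K,L$ permit. This is exactly an almost-diagonal operator on the sequence space $\n(\rd)$, and I would invoke (or, if not available in the quoted references, briefly establish via the equivalent norm in Lemma~\ref{equiv-norm} together with a Hardy-type/maximal-function estimate on the Morrey scale) the boundedness of such operators on $\n(\rd)$; the same argument handles the $\psi_m$ coefficients. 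The reconstruction identity $f=\sum_m\lambda_m(f)\psi_m+\sum_{G,j,m}\lambda^G_{j,m}(f)2^{-j\nd/2}\psi^G_{j,m}$ then holds because the Daubechies system is an orthonormal basis of $L_2(\rd)$ and both sides agree as tempered distributions (test against Schwartz functions, using density and the $\cs'$-convergence already shown); uniqueness of the coefficients follows from this orthonormality by pairing the representation against a single $\psi^{G'}_{j',m'}$ or $\psi_{m'}$.

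Finally, combining the two directions: $I$ is well defined and bounded $\MB(\rd)\to\tn(\rd)$, the synthesis map $S\colon\lambda\mapsto\sum\lambda_m\psi_m+\sum\lambda^G_{j,m}2^{-j\nd/2}\psi^G_{j,m}$ is bounded $\tn(\rd)\to\MB(\rd)$, and $S\circ I=\id$ on $\MB(\rd)$ by the reconstruction identity while $I\circ S=\id$ on $\tn(\rd)$ by orthonormality of the wavelet system; hence $I$ is a linear isomorphism, which is the assertion. The main obstacle I anticipate is the almost-diagonal estimate on $\n(\rd)$ in the converse direction: Morrey-type sequence spaces are not as forgiving as ordinary $b^s_{p,q}$ because the supremum over larger dyadic cubes in the definition \eqref{seq-besov} (equivalently the form in Lemma~\ref{equiv-norm}) interacts delicately with the off-diagonal spatial shifts of the wavelets, so the decay rate $N$ must be chosen carefully against $\nd/p$ and $s$, and one must be sure \eqref{L-cond-wave} is strong enough — this is precisely why the condition there is a strict inequality rather than the non-strict one in the atomic theorem. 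Everything else is bookkeeping of the kind already carried out in \cite{HST} for the classical and Besov-Morrey cases.
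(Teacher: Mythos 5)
Your proposal is correct and follows essentially the same route as the paper: the paper also reduces the wavelet characterisation to the atomic decomposition of Theorem~\ref{atomic-decomposition}, but packages the atoms-to-wavelets transition abstractly via Theorem~5.1 of \cite{HST}, so that the only thing left to prove is that $\n(\rd)$ is a $\varkappa$-sequence space --- which is precisely the almost-diagonal boundedness on $\n(\rd)$ (your ``main obstacle'') together with the pointwise coefficient bound $|\mu_{J,M}|\le C_Q 2^{J\varkappa}\|\mu\mid\n\|$. The paper carries out that estimate exactly as you sketch, using the equivalent norm of Lemma~\ref{equiv-norm}, a dyadic-cube shift argument for the supremum, and a case distinction $p\le 1$ versus $p>1$, with $\varkappa$ chosen between $\max\{\sigma_p-s,s,(\tfrac{\nd}{p}-s)_+\}$ and $L$, which is where \eqref{L-cond-wave} enters.
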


\begin{proof}
\emph{Step 1.}
We prove that the theorem follows from Theorem 5.1 in \cite{HST}. The space  $\MB(\rd)$ is an (isotropic, inhomogeneous) quasi-Banach function space which satisfies
\[ \mathcal{S}(\rd)\hookrightarrow \MB(\rd)\hookrightarrow \mathcal{S}'(\rd) \] 
and  which can be characterised in terms of an $L$-atomic decomposition with $L=K$, cf. Theorem \ref{atomic-decomposition}. Please note that the inequality $L>\frac{\nd}{p}-s$ implies $L>\sigma_p -s$.  So it is sufficient to prove that the sequence space $\tn(\rd)$ is a $\varkappa$-sequence space for some $\varkappa$, $0<\varkappa<L$, cf. Definition 4.1 in \cite {HST}. One can easily check that it is sufficient to prove that the space $\n(\rd)$ is a $\varkappa$-space, so to simplify the notation we restrict our attention to the  space $\n=\n(\rd)$.

Let $b>1$ and $C_1>0$.  For $j,J\in \no$ and $m,M\in \zd$  we put 
\[ I^j_J(m) = \left\{ M\in \zd: \; b Q_{J,M}\cap C_1Q_{j,m}\not= \emptyset\right\} \quad\text{and}\quad 
 \hat{I}^j_J(M) = \left\{ m\in \zd: \; b Q_{J,M}\cap C_1Q_{j,m}\not= \emptyset\right\} .\]

Note that the cardinalities of $\#I^j_J(m)$ and $ \#\hat{I}^j_J({M})$ satisfy 
\begin{equation}\label{kappa0}
\#I^j_J(m)\sim 
\begin{cases}
1, & \quad J\le j,\\
2^{\nd(J-j)},&\quad  J>j, 
\end{cases} 
\qquad \text{and}\qquad
\#\hat{I}^j_J(M)\sim 
\begin{cases}
1, & \quad j\le J,\\
2^{\nd(j-J)},&\quad j>J. 
\end{cases}
\end{equation}

We prove that there exists $\varkappa$, $0<\varkappa< L$, such that
\begin{itemize}
\item[{(i)}] for any $b>1$, $C_1>0$, and all $\mu\in\n 
  $, any sequence $\lambda=\{\lambda_{j,m}\}$ with 
\begin{align}\label{kappa1}
|\lambda_{j,m}| \le C_1 \sum_{J\in \mathbb{N}_0} 2^{-\varkappa |J-j|} \sum_{M\in I^j_J(m)} 2^{-\nd(J-j)_+} |\mu_{J,M}|, \qquad 
j\in \mathbb{N}_0, \quad m\in \zd,
\end{align} 
belongs to $\n$ 
and satisfies
 \begin{equation}\label{kappa2}
 \|\lambda|\n\| \le C  \|\mu|\n\| ;
 \end{equation}
\item[{(ii)}] for any cube $Q$ there is a constant $c_Q>0$ such that for all $\mu\in \n$
 \begin{align}\label{kappa3}
 |\mu_{J,M}| \le C_Q 2^{J\varkappa} \|\mu|\n\|\quad\text{for all} \quad J\in \mathbb{N}_0, \quad M\in \zd\quad\text{with}\quad Q_{J,M}\subset Q.
 \end{align}
\end{itemize}
Please note that  there is a constant  $\eta=\eta(b,C_1)\in \mathbb{N}$ such that if  $Q_{j,m}\subset Q_{\nu,k}$  and $bQ_{j,m}\cap C_1Q_{J,M}\not= \emptyset$, then $Q_{J,M}\subset Q_{\nu-\eta,\ell}$ for some  dyadic cube $Q_{\nu-\eta,\ell}$, $\ell=\ell(k)$, such that $ Q_{\nu,k}\subset Q_{\nu-\eta,\ell}$.

 \emph{Step 2.} We prove the property  (i)  for $0<p\le 1$. We decompose the sum in \eqref{kappa1} into two parts for  $J\le j$ and  for $J>j$. Let $\nu\in {\zz}$ and $k\in\zd$ be fixed, {with $\nu\leq j$}. Then 
\begin{align}\label{kappa4}
\lefteqn{{ \mathop{\sum_{m\in\zd:}}_{Q_{j,m}\subset Q_{\nu,k}}}
\!\! |\lambda_{j,m}|^p }\\ \nonumber
\le & \  C_1  \sum_{J=0}^j 2^{-\varkappa (j-J)p}  {\mathop{\sum_{m\in\zd:}}_{Q_{j,m}\subset Q_{\nu,k}}} 
 \sum_{M\in I^j_J(m)} |\mu_{J,M}|^p + 
C_1 \sum_{J=j+1}^\infty 2^{-(\varkappa+\nd) (J-j)p} 
{ \mathop{\sum_{m\in\zd:}}_{Q_{j,m}\subset Q_{\nu,k}}}
\sum_{M\in I^j_J(m)} |\mu_{J,M}|^p  \\ \nonumber 
\le & \ C_1  \sum_{J=0}^j 2^{-\varkappa (j-J)p}   \!\!\!  { \mathop{\sum_{M\in\zd:}}_{Q_{J,M}\subset Q_{\nu-\eta,\ell}}} \!\!\!\! 
\#\hat{I}^j_J(M)\, |\mu_{J,M}|^p   + 
C_1  \sum_{J=j+1}^\infty 2^{-(\varkappa+\nd) (J-j)p}  \!\!\! { \mathop{\sum_{M\in\zd:}}_{Q_{J,M}\subset Q_{\nu-\eta,\ell}} } \!\!\!\! 
\#\hat{I}^j_J(M) \, |\mu_{J,M}|^p \\ \nonumber
\le   
 & C_1   \sum_{J=0}^j 2^{-(\varkappa-\frac{\nd}{p}) (j-J)p}  \!\!\! { \mathop{\sum_{M\in\zd:}}_{Q_{J,M}\subset Q_{\nu-\eta,\ell}} } \!\!\!\!  
  |\mu_{J,M}|^p   + 
C_1 \sum_{J=j+1}^\infty 2^{-(\varkappa+\nd) (J-j)p} \!\!\! { \mathop{\sum_{M\in\zd:}}_{Q_{J,M}\subset Q_{\nu-\eta,\ell}}} \!\!\!\! 
 |\mu_{J,M}|^p  ,
\end{align}
where the last inequality follows from \eqref{kappa0} and the last but one follows from the definition of the set $\hat{I}^j_J(M)$. 
 
If $\frac{q}{p}\le 1$, then 
\begin{align}\label{kappa5}
&\Big(\!\!\!{ \mathop{\sum_{m\in\zd:}}_{Q_{j,m}\subset Q_{\nu,k}}} 
\!\!\!|\lambda_{j,m}|^p\Big)^\frac{q}{p} \le \\ \nonumber
 &\qquad \le C_1 \sum_{J=0}^j 2^{-(\varkappa-\frac{\nd}{p}) (j-J)q} \,\Big( \!\!\!\!\!{ \mathop{\sum_{M\in\zd:}}_{Q_{J,M}\subset Q_{\nu-\eta,\ell}}} \!\!\!\! 
 |\mu_{J,M}|^p\Big)^\frac{q}{p}   + 
 \leq C_1\sum_{J=j+1}^\infty 2^{-(\varkappa+\nd) (J-j)q} \,\Big( \!\!\!\!\! { \mathop{\sum_{M\in\zd:}}_{Q_{J,M}\subset Q_{\nu-\eta,\ell}}} \!\!\!\! 
  |\mu_{J,M}|^p\Big)^\frac{q}{p}.   
\end{align}
If $\frac{q}{p}> 1$, then for any $\varepsilon>0$ we get, using the H\"older inequality,
\begin{align}\label{kappa6}
&\Big(\!\!\!\!{ \mathop{\sum_{m\in\zd:}}_{Q_{j,m}\subset Q_{\nu,k}}} 
\!\!\!|\lambda_{j,m}|^p\Big)^\frac{q}{p} \le \\ \nonumber
 &\qquad \le C_1 \sum_{J=0}^j 2^{-(\varkappa-\varepsilon-\frac{\nd}{p}) (j-J)q} \Big( \!\!\!\!\!\! { \mathop{\sum_{M\in\zd:}}_{Q_{J,M}\subset Q_{\nu-\eta,\ell}} } \!\!\!\! 
  |\mu_{J,M}|^p\Big)^\frac{q}{p}   + 
C_1  \sum_{J=j+1}^\infty 2^{-(\varkappa-\varepsilon+\nd) (J-j)q}\Big( \!\!\!\!\! \!{ \mathop{\sum_{M\in\zd:}}_{Q_{J,M}\subset Q_{\nu-\eta,\ell}} } \!\!\!\! 
 |\mu_{J,M}|^p\Big)^\frac{q}{p}.   
\end{align}
So in both cases we have 
\begin{align}\label{kappa7}
& \mathop{\sup_{\nu: \nu \leq j}}_{k\in \zd}\!  \varphi(2^{-\nu})^q \,2^{\nu \nd\frac{q}{p}} \Big(\!\!\! { \mathop{\sum_{m\in\zd:}}_{Q_{j,m}\subset Q_{\nu,k}}}
|\lambda_{j,m}|^p\Big)^\frac{q}{p}  \\ \nonumber
  \le &\quad 2^{\eta \nd\frac{q}{p}} C_1 \sum_{J=0}^j 2^{-(\varkappa-\varepsilon-\frac{\nd}{p}) (j-J)q} \mathop{\sup_{\nu: \nu \leq j}}_{\ell\in \zd}\!  \varphi(2^{-\nu+\eta})^q \,2^{(\nu-\eta) \nd\frac{q}{p}} \Big(\!\!\! { \mathop{\sum_{M\in\zd:}}_{Q_{J,M}\subset Q_{\nu-\eta,\ell}}} \!\!\!\!
   |\mu_{J,M}|^p\Big)^\frac{q}{p}   + \\
  \nonumber &\quad 
  2^{\eta \nd\frac{q}{p}}  C_1 \sum_{J=j+1}^\infty 2^{-(\varkappa-\varepsilon+\nd) (J-j)q} \mathop{\sup_{\nu: \nu \leq j}}_{\ell\in \zd}\!  \varphi(2^{-\nu+\eta})^q \,2^{(\nu-\eta) \nd\frac{q}{p}} \Big( \!\!\!\!\! { \mathop{\sum_{M\in\zd:}}_{Q_{J,M}\subset Q_{\nu-\eta,\ell}}} \!\!\!\!
   |\mu_{J,M}|^p\Big)^\frac{q}{p}  \\
  \nonumber \le &\quad
  2^{\eta \nd\frac{q}{p}} C_1 \sum_{J=0}^j 2^{-(\varkappa-\varepsilon-\frac{\nd}{p}) (j-J)q} \mathop{\sup_{\nu: \nu \leq J}}_{k\in \zd}\!  \varphi(2^{-\nu})^q \,2^{\nu \nd\frac{q}{p}} \Big( \!\!\!\!\! { \mathop{\sum_{M\in\zd:}}_{Q_{J,M}\subset Q_{\nu,k}}} \!\!\!\!
   |\mu_{J,M}|^p\Big)^\frac{q}{p}   + \\ \nonumber
  &\quad 
2^{\eta \nd\frac{q}{p}}  C_1 \sum_{J=j+1}^\infty 2^{-(\varkappa-\varepsilon+\nd) (J-j)q} \mathop{\sup_{\nu: \nu \leq J}}_{k\in \zd}\!  \varphi(2^{-\nu})^q \,2^{\nu \nd\frac{q}{p}} \Big( \!\!\!\!\!  { \mathop{\sum_{M\in\zd:}}_{Q_{J,M}\subset Q_{\nu,k}}} \!\!\!\!
  |\mu_{J,M}|^p\Big)^\frac{q}{p}.   
\end{align}
The first inequality follows from {\eqref{kappa5} and } \eqref{kappa6} and the fact that any $k$ appoints one $\ell=\ell(k)$, so the supremum  over $k$ can be dominated by the supremum over $\ell$. The second inequality follows  by rescaling.

In consequence, 
 \begin{align}\label{kappa8}
 & \|\lambda|\n\|^q   \le    c \sum_{j=0}^\infty 2^{j(s-\frac{\nd}{p}) q} 
 \sum_{J=0}^j 2^{-(\varkappa-\varepsilon-\frac{\nd}{p}) (j-J)q} \mathop{\sup_{\nu: \nu \leq J}}_{k\in \zd}\!  \varphi(2^{-\nu})^q \,2^{\nu \nd\frac{q}{p}} \Big(
  \!\!\!\!\!  { \mathop{\sum_{M\in\zd:}}_{Q_{J,M}\subset Q_{\nu,k}}} \!\!\!\! 
   |\mu_{J,M}|^p\Big)^\frac{q}{p} \\ \nonumber 
& \qquad  +  c \sum_{j=0}^\infty 2^{j(s-\frac{\nd}{p}) q} 
\sum_{J=j+1}^\infty 2^{-(\varkappa-\varepsilon+\nd) (J-j)q} \mathop{\sup_{\nu: \nu \leq J}}_{k\in \zd}\!  \varphi(2^{-\nu})^q \,2^{\nu \nd\frac{q}{p}}
 \Big( \!\!\!\!\!  { \mathop{\sum_{M\in\zd:}}_{Q_{J,M}\subset Q_{\nu,k}}} \!\!\!\! 
  |\mu_{J,M}|^p\Big)^\frac{q}{p}     \\ \nonumber 
& \qquad \le c \sum_{j=0}^\infty 
 \sum_{J=0}^j  2^{J(s-\frac{\nd}{p}) q} 2^{-(\varkappa-\varepsilon-s)(j-J)q} \mathop{\sup_{\nu: \nu \leq J}}_{k\in \zd}\!  \varphi(2^{-\nu})^q \,2^{\nu \nd\frac{q}{p}} 
 \Big( \!\!\!\!\!  { \mathop{\sum_{M\in\zd:}}_{Q_{J,M}\subset Q_{\nu,k}}} \!\!\!\! 
  |\mu_{J,M}|^p\Big)^\frac{q}{p}  
\\\nonumber &\qquad + 
c\sum_{j=0}^\infty  
\sum_{J=j+1}^\infty 2^{J(s-\frac{\nd}{p}) q} 2^{-(\varkappa-\varepsilon- \sigma_p+s) (J-j)q} \mathop{\sup_{\nu: \nu \leq J}}_{k\in \zd}\!  \varphi(2^{-\nu})^q \,2^{\nu \nd\frac{q}{p}} \Big( \!\!\!\!\!  { \mathop{\sum_{M\in\zd:}}_{Q_{J,M}\subset Q_{\nu,k}}} \!\!\!\! 
 |\mu_{J,M}|^p\Big)^\frac{q}{p}   
\\
\nonumber &\qquad \le
c \sum_{J=0}^\infty  
2^{J(s-\frac{\nd}{p}) q}  \mathop{\sup_{\nu: \nu \leq J}}_{k\in \zd}\!  \varphi(2^{-\nu})^q \,2^{\nu \nd\frac{q}{p}} 
\Big( \!\!\!\!\!  { \mathop{\sum_{M\in\zd:}}_{Q_{J,M}\subset Q_{\nu,k}}} \!\!\!\! 
|\mu_{J,M}|^p\Big)^\frac{q}{p} \sum_{j= J}^\infty  2^{-(\varkappa-\varepsilon-s)(j-J)q} \\\nonumber
&\qquad +
c\sum_{J=1}^\infty 
2^{J(s-\frac{\nd}{p}) q}  \mathop{\sup_{\nu: \nu \leq J}}_{k\in \zd}\!  \varphi(2^{-\nu})^q \,2^{\nu \nd\frac{q}{p}} \Big(
 \!\!\!\!\!  { \mathop{\sum_{M\in\zd:}}_{Q_{J,M}\subset Q_{\nu,k}}} \!\!\!\!
  |\mu_{J,M}|^p\Big)^\frac{q}{p} \sum_{j=0}^{J-1} 2^{-(\varkappa-\varepsilon- \sigma_p+s) (J-j)q} 
 \\ &\qquad  \le c \|\mu|\n\|^q \nonumber 
 \end{align}
 if we choose $\varepsilon >0$ such that  $\varkappa-\varepsilon- \sigma_p+s>0$ and $ \varkappa-\varepsilon-s>0$. This is always possible if $\varkappa>\max\{\sigma_p-s,s\}$, that is, we need $L>\max\{\sigma_p-s,s\}$ here which is implied by \eqref{L-cond-wave}. This finishes the proof of (i) for $0<p\le 1$. 

\emph{Step 3}. Now we prove the property  (i)  for $p> 1$. Applying the H\"older inequality twice yields for some $\varepsilon>0$, 
\begin{align}\label{kappa9}
\sum_{J=0}^j 2^{-\varkappa(j-J)} \sum_{M\in I^j_J(m)} |\mu_{J,M}| \le c_2 \Big( \sum_{J=0}^j 2^{-(\varkappa-\varepsilon)(j-J)p} \sum_{M\in I^j_J(m)} |\mu_{J,M}|^p\Big)^\frac{1}{p} 
\intertext{and}
\sum_{J=j+1}^\infty 2^{-(\varkappa+ \nd)(J-j)} \sum_{M\in I^j_J(m)} |\mu_{J,M}| \le c_2 \Big( \sum_{J=j+1}^\infty 2^{-(\varkappa-\varepsilon+\frac{\nd}{p})(J-j)p} \sum_{M\in I^j_J(m)} |\mu_{J,M}|^p\Big)^\frac{1}{p}, \label{kappa10}
\end{align}
in view of \eqref{kappa0}, see also \cite{HST}. 
 Now using \eqref{kappa9} and \eqref{kappa10} and a similar method as in \eqref{kappa4} we can prove that 
  \begin{align}\label{kappa11}
{ \mathop{\sum_{m\in\zd:}}_{Q_{j,m}\subset Q_{\nu,k}}} 
|\lambda_{j,m}|^p \le & \      
 C_1 \sum_{J=0}^j 2^{-(\varkappa-\varepsilon-\frac{\nd}{p}) (j-J)p}  \!\!\!\! { \mathop{\sum_{M\in\zd:}}_{Q_{J,M}\subset Q_{\nu-\eta,\ell}}} \!\!\!\!  
 |\mu_{J,M}|^p   + 
\\ \nonumber &\qquad \qquad \nonumber
 C_1 \sum_{J=j+1}^\infty 2^{-(\varkappa-\varepsilon+\frac{\nd}{p}) (J-j)p}  \!\!\!\!\! { \mathop{\sum_{M\in\zd:}}_{Q_{J,M}\subset Q_{\nu-\eta,\ell}}} \!\!\!\! 
  |\mu_{J,M}|^p.  
\end{align}
  The rest of the proof goes similarly as in the case $p\le 1$. Now we  should  choose $\varepsilon >0$ such that  $\varkappa-2\varepsilon- \sigma_p+s=\varkappa-2\varepsilon+s>0$ and $ \varkappa-2\varepsilon-s>0$. In other words, we need $\varkappa$ to satisfy $L>\varkappa > |s|$,  but this is again possible in view of \eqref{L-cond-wave}.
 
\emph{Step 4.} The proof of  the property (ii) is straightforward. Let $Q$ be some cube, $J\in\no$ and $M\in\zd$ such that $Q_{J,M}\subset Q$. Then we have 
\begin{align*}
|\mu_{J,M}| & \le  \sup_{k\in \zd} \, \Bigl(  \!\!\!\! { \mathop{\sum_{M\in\zd:}}_{Q_{J,M}\subset Q_{0,k}}} 
     \!\!\!\!  |\mu_{J,M}|^p\Bigr)^{1/p} { \sim }\, 
   2^{J\frac{\nd}{p}} \sup_{k\in\zd} \varphi(2^{-0}) 2^{(0-J)\frac{\nd}{p}} 
     \Big(  \!\!\!\!  { \mathop{\sum_{M\in\zd:}}_{Q_{J,M}\subset Q_{0,k}}} 
       \!\!\!\! |\mu_{J,M}|^p\Big)^{\frac1p} \\
& \leq  2^{J(\frac{\nd}{p}-s)} 2^{Js} \mathop{\sup_{\nu: \nu \leq J}}_{k\in \zd}\!  \varphi(2^{-\nu}) \,2^{(\nu-J)\frac{\nd}{p} }\Big(\!\! \mathop{\sum_{N\in\zd:}}_{Q_{J,N}\subset Q_{\nu,k}}\!\!|\mu_{J,N}|^p\Big)^{\frac1p} \\
& \le   c  2^{J(\frac{\nd}{p}- s)} \, \|\mu|n^s_{\varphi,p,\infty}\| \le  c  2^{J\varkappa} \, \|\mu|\n\|.
  \end{align*}
So the estimate holds with the same constant for any cube $Q$  and $\varkappa>(\frac{\nd}{p}- s)_+$. In view of \eqref{L-cond-wave} it is always possible to find $\varkappa$ such that $L>\varkappa>(\frac{\nd}{p}- s)_+$. This concludes the proof.
\end{proof}

\begin{remark}
 As in the paper \cite{HST} we do not claim the condition in \eqref{L-cond-wave}  to be sharp, the assumption on $L$ is just taken for convenience, following the argument in \cite{HST}. Moreover, for our purposes, that is, to transfer our sequence space results from Section~\ref{emb-seq} to the function space counterparts in Section~\ref{emb-func}, it is absolutely sufficient to find {\em some} number $L$ satisfying \eqref{L-cond-wave}. But we did not care for minimal assumptions. 
The result for the `classical' case $\varphi(t)=t^{\nd/u}$, $t>0$, $0<p\leq u<\infty$, can be found  in \cite[Thm. 4.5, Cor. 4.17]{MR-1} and \cite{Saw2}.
  \end{remark}


\section{Embeddings of generalised Besov-Morrey sequence spaces}\label{emb-seq}
First we deal with the embeddings of generalised Besov-Morrey sequence spaces $\n$, for the definitions we refer to Section~\ref{prelim}. These sequence spaces appear naturally when applying the wavelet decomposition result Theorem~\ref{waveletth} for generalised Besov-Morrey (function) spaces.

\begin{theorem}  \label{main}
Let $s_i\in\rr$, $0<p_i<\infty$, $0<q_i\leq \infty$, and $\varphi_i\in {\mathcal G}_{p_i}$, for $i=1,2$. 
We assume without loss of generality that $\varphi_1(1)=\varphi_2(1)=1$. 
Let $ \varrho=\min(1,\frac{p_1}{p_2})$ and $\alpha_j= \sup_{\nu\le j}\frac{\varphi_2(2^{-\nu})}{\varphi_1(2^{-\nu})^\varrho}$, $j\in \mathbb{N}_0$. 

There is a continuous embedding 
\begin{equation} \label{embed1}
\na \hookrightarrow \nb
\end{equation}
 if and only if 
 
\begin{align}\label{cond0}
\sup_{\nu\le 0}\frac{\varphi_2(2^{-\nu})}{\varphi_1(2^{-\nu})^\varrho} & < \infty , 
\intertext{and} 
\label{cond2}
\left\{   2^{j(s_2-s_1)} \alpha_j \frac{\varphi_1(2^{-j})^\varrho}{\varphi_1(2^{-j})}\right\}_j & \in \ell_{q^*}  \qquad \text{where}  \quad \frac{1}{q^*}=\left(\frac{1}{q_2}-\frac{1}{q_1}\right)_+  .
\end{align}
 The embedding  \eqref{embed1} is never compact.
\end{theorem}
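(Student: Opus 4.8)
The plan is to prove the characterisation in two directions, using the equivalent norm from Lemma~\ref{equiv-norm}, and then to show non-compactness by a separate dilation/translation argument. Throughout I would normalise by $\varphi_1(1)=\varphi_2(1)=1$ and work with the quantities
\[
\|\lambda\mid \na\|^* = \Big(\sum_{j\ge 0} 2^{js_1q_1}\, \big(N^{(1)}_j(\lambda)\big)^{q_1}\Big)^{1/q_1},\qquad
N^{(i)}_j(\lambda) := \mathop{\sup_{\nu\le j}}_{k\in\zd}\varphi_i(2^{-\nu})\,2^{(\nu-j)\nd/p_i}\Big(\!\!\mathop{\sum_{m:\,Q_{j,m}\subset Q_{\nu,k}}}\!\!|\lambda_{j,m}|^{p_i}\Big)^{1/p_i},
\]
so the whole problem becomes: for which parameters does $\{2^{js_1}N^{(1)}_j(\lambda)\}_j\in\ell_{q_1}$ imply $\{2^{js_2}N^{(2)}_j(\lambda)\}_j\in\ell_{q_2}$?

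\textbf{Sufficiency.} The first step is a pointwise (in $j$) comparison of $N^{(2)}_j$ and $N^{(1)}_j$. Fix $j$ and a dyadic cube $Q_{\nu,k}$ with $\nu\le j$. If $p_2\le p_1$ one uses the embedding $\ell_{p_2}\hookrightarrow\ell_{p_1}$ restricted to the $2^{\nd(j-\nu)}$ indices $m$ with $Q_{j,m}\subset Q_{\nu,k}$, which costs a factor $2^{\nd(j-\nu)(1/p_2-1/p_1)}$; combined with the exponents $2^{(\nu-j)\nd/p_i}$ this is exactly arranged so that $\varrho=1$ and the $p$-exponents match up. If $p_2>p_1$, one instead splits $Q_{\nu,k}$ into the subcubes $Q_{\nu',k'}$ at a finer scale and uses $\ell_{p_1}\hookrightarrow\ell_{p_2}$ the other way; here $\varrho=p_1/p_2<1$ and the $\varphi_1$ appears raised to the power $\varrho$ after taking a supremum, which is where $\alpha_j=\sup_{\nu\le j}\varphi_2(2^{-\nu})\varphi_1(2^{-\nu})^{-\varrho}$ enters. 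The upshot is the key inequality
\[
N^{(2)}_j(\lambda)\ \lesssim\ \alpha_j\,\varphi_1(2^{-j})^{1-\varrho}\, \big(\varphi_1(2^{-j})\big)^{-1}\cdot\varphi_1(2^{-j})\, N^{(1)}_j(\lambda)\ =\ \alpha_j\,\frac{\varphi_1(2^{-j})^\varrho}{\varphi_1(2^{-j})}\cdot\big(\varphi_1(2^{-j})^{?}\big)\dots
\]
— more precisely, after carefully tracking the $\varphi_1(2^{-j})$ powers, $N^{(2)}_j(\lambda)\lesssim \alpha_j\,\varphi_1(2^{-j})^{-(1-\varrho)}\,N^{(1)}_j(\lambda)$ when $p_2>p_1$ and $N^{(2)}_j(\lambda)\lesssim \alpha_j\,N^{(1)}_j(\lambda)$ when $p_2\le p_1$; in both cases this rewrites as $N^{(2)}_j(\lambda)\lesssim \alpha_j\,\varphi_1(2^{-j})^{\varrho-1}\,N^{(1)}_j(\lambda)$. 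Then $2^{js_2}N^{(2)}_j(\lambda)\lesssim \big(2^{j(s_2-s_1)}\alpha_j\varphi_1(2^{-j})^{\varrho-1}\big)\cdot 2^{js_1}N^{(1)}_j(\lambda)$, and condition \eqref{cond2} says precisely that the bracketed sequence lies in $\ell_{q^*}$ with $1/q^*=(1/q_2-1/q_1)_+$; Hölder's inequality (or $\ell_{q_1}\hookrightarrow\ell_{q_2}$ when $q_1\le q_2$) then gives $\|\lambda\mid\nb\|^*\lesssim\|\lambda\mid\na\|^*$. Condition \eqref{cond0} is needed to make the $j=0$ term (equivalently all $\nu\le0$) finite, i.e. to ensure $\alpha_0<\infty$; note $\alpha_j$ is nondecreasing so finiteness of $\alpha_0$ is exactly \eqref{cond0}.

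\textbf{Necessity.} Here I would test the embedding on explicit sequences. For each fixed scale $J$ and each dyadic cube $Q_{\nu,k}$ with $\nu\le J$, take $\lambda$ supported on the indices $m$ with $Q_{J,m}\subset Q_{\nu,k}$, all equal to $1$ there. Computing both norms: $\|\lambda\mid\na\|^*\sim 2^{Js_1}\varphi_1(2^{-\nu})2^{(\nu-J)\nd/p_1}2^{(J-\nu)\nd/p_1}=2^{Js_1}\varphi_1(2^{-\nu})$ up to the contribution of coarser cubes, which one checks is dominated; similarly $\|\lambda\mid\nb\|^*\gtrsim 2^{Js_2}\varphi_2(2^{-\nu})\cdot(\text{power of }2^{\nd(J-\nu)})$ depending on whether $p_2\lessgtr p_1$, and this forces, after optimising over $\nu\le J$, exactly the membership \eqref{cond2} for the ``diagonal'' part and \eqref{cond0} from taking $J=0$, $\nu\to-\infty$... wait, $\nu\le0$ only, so from $J=0$. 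To get the $\ell_{q^*}$ (rather than merely $\ell_\infty$) statement when $q_2<q_1$ one superposes such building blocks across many scales $J$ with suitable coefficients and uses a duality/gliding-hump argument: if the sequence in \eqref{cond2} were not in $\ell_{q^*}$ one constructs $\lambda\in\na$ with $\lambda\notin\nb$ by choosing scalar weights dual to $q^*$. The main obstacle is precisely this last step — making the extremal sequences genuinely extremal simultaneously in the $\nu$-supremum (the Morrey structure) and in the $j$-summation (the Besov structure), and controlling the cross terms coming from cubes $Q_{\nu,k}$ at scales not equal to the support scale. I would isolate it as a lemma analogous to \cite[Prop.~3.1]{hs12}.

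\textbf{Non-compactness.} Finally, assuming \eqref{cond0}--\eqref{cond2} hold so that \eqref{embed1} is continuous, I would show it is never compact by exhibiting a bounded sequence in $\na$ with no convergent subsequence in $\nb$. The natural choice is a ``moving bump'': fix one nonzero finitely supported block $\lambda^{(0)}$ as above at some scale, and translate it, $\lambda^{(n)}_{j,m}:=\lambda^{(0)}_{j,m-m_n}$ for a sequence $m_n\in\zd$ with $|m_n|\to\infty$. Since both sequence (quasi-)norms are translation invariant (the $\M$-norm is, being a supremum over all dyadic cubes), $\|\lambda^{(n)}\mid\na\|=\|\lambda^{(0)}\mid\na\|$ is bounded, while $\|\lambda^{(n)}-\lambda^{(n')}\mid\nb\|\gtrsim\|\lambda^{(0)}\mid\nb\|>0$ for $n\ne n'$ large (disjoint supports), so no subsequence is Cauchy in $\nb$. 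This shows the embedding is not compact and requires only the elementary observations that the spaces are translation invariant and that one such block has nonzero, finite norm in both spaces — which is immediate from the explicit computations already used in the necessity part.
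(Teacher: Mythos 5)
Your overall architecture (pointwise comparison of the level\nobreakdash-$j$ Morrey quantities, then H\"older in $j$; explicit test sequences for necessity; translated bumps for non-compactness) is the same as the paper's, and the sufficiency, the $\ell_{q^*}$-duality step for $q_2<q_1$, and the non-compactness argument are sound in outline. But there are two problems in the case $p_1<p_2$, one of which is a genuine gap. First, a soft spot in sufficiency: the mechanism you name for $p_1<p_2$ (``split into subcubes and use $\ell_{p_1}\hookrightarrow\ell_{p_2}$'') does not produce the key inequality $N^{(2)}_j\lesssim\alpha_j\,\varphi_1(2^{-j})^{\varrho-1}N^{(1)}_j$: the free embedding leaves a mismatch $2^{(j-\nu)\nd(1/p_1-1/p_2)}$ between the normalisations $2^{(\nu-j)\nd/p_i}$, which is unbounded as $\nu\to-\infty$. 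The actual argument normalises $N^{(1)}_j(\lambda)=1$, deduces $\varphi_1(2^{-j})|\lambda_{j,m}|\le 1$ for every $m$, and uses $\sum|\lambda_{j,m}|^{p_2}\le\bigl(\sup_m|\lambda_{j,m}|\bigr)^{p_2-p_1}\sum|\lambda_{j,m}|^{p_1}$; this interpolation between the $\ell_{p_1}$-sum and the $\ell_\infty$-bound is precisely where the exponent $\varrho=p_1/p_2$ and the factor $\varphi_1(2^{-j})^{\varrho-1}$ come from. Your displayed formula with the ``$?$'' correctly guesses the target but the route you indicate would not get there.

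The genuine gap is in the necessity part when $p_1<p_2$. Your test sequences are full blocks of ones, $\lambda_{J,m}=1$ for all $m$ with $Q_{J,m}\subset Q_{\nu,k}$. For such a block the cardinality $2^{(J-\nu)\nd}$ cancels the normalisation $2^{(\nu-J)\nd/p_i}$ in \emph{both} norms (there is no residual ``power of $2^{\nd(J-\nu)}$ depending on whether $p_2\lessgtr p_1$''), so the embedding only yields $\varphi_2(2^{-\nu})/\varphi_1(2^{-\nu})\le C\,2^{J(s_1-s_2)}$. Since $\varphi_1(2^{-\nu})\ge\varphi_1(1)=1$ for $\nu\le 0$, this is strictly weaker than \eqref{cond0}, which demands $\varphi_2(2^{-\nu})\le C\,\varphi_1(2^{-\nu})^{p_1/p_2}$, and it likewise cannot detect the factor $\alpha_j\varphi_1(2^{-j})^{\varrho-1}$ in \eqref{cond2}. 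The missing idea is the combinatorial construction of the paper's Substep 3.2: place only $N=\lceil 2^{(j_0-\nu_0)\nd}\varphi_1(2^{-\nu_0})^{-p_1}\rceil$ unit entries in the block and distribute them as uniformly as possible, so that every intermediate cube $Q_{\nu,k}$ contains at most about $2^{\nd(\nu_0-\nu)}N+2$ of them. Uniformity keeps the $\na$-norm bounded at every scale $\nu$, while the $\ell_{p_2}$-count at the coarsest scale is $N^{1/p_2}\sim 2^{(j_0-\nu_0)\nd/p_2}\varphi_1(2^{-\nu_0})^{-p_1/p_2}$, which is exactly what produces the exponent $\varrho$ on $\varphi_1$. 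Without this device (or an equivalent one) the ``only if'' direction of the theorem is not established for $p_1<p_2$; the obstacle is not, as you suggest, only the interplay between the $\nu$-supremum and the $j$-summation, but already the choice of the extremal configuration within a single scale.
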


\begin{proof}
{\em Step 1.}~ First we consider the sufficiency of the conditions \eqref{cond0}-\eqref{cond2}. Please note that it follows from  \eqref{cond0} that the supremum defining $\alpha_j$ is finite, so the sequence $(\alpha_j)_j$ is well defined. 
 
We start by proving some inequalities  for any fixed $j\in\no$.
If $p_2\le p_1$, i.e., $\varrho=1$,  then  we have the following inequality 
\begin{align}\label{thmm1:aa}
& \mathop{\sup_{\nu: \nu \leq j}}_{k\in \zd} \varphi_2(2^{-\nu}) 2^{(\nu-j)\frac{\nd}{p_2}}\biggl(\!\!\mathop{\sum_{m\in\zd:}}_{Q_{j,m}\subset Q_{\nu, k}}  
\!\! | \lambda_{j,m}|^{p_2} \biggr)^{\frac{1}{p_2}}  \\
& \qquad  \qquad \qquad  \qquad \leq  \alpha_j\, 
\mathop{\sup_{\nu: \nu \leq j}}_{k\in \zd} \varphi_1(2^{-\nu})  2^{(\nu-j)\frac{\nd}{p_1}}\biggl(\!\! \mathop{\sum_{m\in\zd:}}_{Q_{j,m}\subset Q_{\nu, k}}  \!\! 
| \lambda_{j,m}|^{p_1} \biggr)^{\frac{1}{p_1}} .  \nonumber
\end{align}
Indeed, for any $\nu\le j$ we have 
\begin{align}
&\varphi_2(2^{-\nu})   2^{(\nu-j)\frac{\nd}{p_2}} \biggl(\!\! \mathop{\sum_{m\in\zd:}}_{Q_{j,m}\subset Q_{\nu, k}}   \!\! | \lambda_{j,m}|^{p_2}  \biggr)^{\frac{1}{p_2}}  \notag \\
& \qquad \qquad  \leq \varphi_2(2^{-\nu})  2^{(\nu-j)\frac{\nd}{p_2}}  2^{(j-\nu)\nd\left( \frac{1}{p_2} -\frac{1}{p_1}\right)} \biggl(\!\! \mathop{\sum_{m\in\zd:}}_{Q_{j,m}\subset Q_{\nu, k}} 
\!\!   | \lambda_{j,m}|^{p_1}  \biggr)^{\frac{1}{p_1}}  \notag \\
&\qquad \qquad  \leq   \, \alpha_j \varphi_1(2^{-\nu})  2^{(\nu-j)\frac{\nd}{p_1}}  \biggl(\!\! \mathop{\sum_{m\in\zd:}}_{Q_{j,m}\subset Q_{\nu, k}}  \!\!  | \lambda_{j,m}|^{p_1}  \biggr)^{\frac{1}{p_1}},\notag
\end{align}
where the first inequality follows by H\"older's inequality. Taking the supremum over $\nu\le j$ and $k\in\zd$ we get \eqref{thmm1:aa}. 

If $p_1<p_2$, i.e., $\varrho<1$,  then 
\begin{align}\label{thmm1:a}
& \mathop{\sup_{\nu: \nu \leq j}}_{k\in \zd} \varphi_2(2^{-\nu}) 2^{(\nu-j)\frac{\nd}{p_2}}\biggl(\!\! \mathop{\sum_{m\in\zd:}}_{Q_{j,m}\subset Q_{\nu, k}}  \!\!
| \lambda_{j,m}|^{p_2} \biggr)^{\frac{1}{p_2}}  \\
& \qquad  \qquad \qquad  \qquad \leq  \alpha_j\, \frac{\varphi_1(2^{-j})^\varrho}{\varphi_1(2^{-j})}
\mathop{\sup_{\nu: \nu \leq j}}_{k\in \zd} \varphi_1(2^{-\nu})  2^{(\nu-j)\frac{\nd}{p_1}}\biggl(\!\!\mathop{\sum_{m\in\zd:}}_{Q_{j,m}\subset Q_{\nu, k}}  \!\!
| \lambda_{j,m}|^{p_1} \biggr)^{\frac{1}{p_1}} .  \nonumber
\end{align}
It is sufficient to prove  \eqref{thmm1:a} for sequences $(\lambda_{j,m})_m$  satisfying the following assumption
\begin{equation}   \label{thmm1:b}
 \mathop{\sup_{\nu: \nu \leq j}}_{k\in \zd} \varphi_1(2^{-\nu})  2^{(\nu-j)\frac{\nd}{p_1}}\biggl(\!\! \mathop{\sum_{m\in\zd:}}_{Q_{j,m}\subset Q_{\nu, k}}  \!\!
 | \lambda_{j,m}|^{p_1} \biggr)^{\frac{1}{p_1}}=1.
\end{equation}
In this case $  \varphi_1(2^{-j}) |\lambda_{j,m}|\leq 1$ for any $m$. So 
\begin{align}
 \varphi_1(2^{-j})^{p_2}\mathop{\sum_{m\in\zd:}}_{Q_{j,m}\subset Q_{\nu, k}} \!\!   | \lambda_{j,m}|^{p_2}  \leq 
 \varphi_1(2^{-j})^{p_1} \mathop{\sum_{m\in\zd:}}_{Q_{j,m}\subset Q_{\nu, k}}  \!\!  | \lambda_{j,m}|^{p_1}  \leq 2^{(j-\nu)\nd}   \left(\frac{\varphi_1(2^{-j})}{\varphi_1(2^{-\nu})}  \right)^{p_1}, \qquad \nu \leq j, \nonumber
\end{align}
and  
\begin{align}
\varphi_2(2^{-\nu})  2^{(\nu-j)\frac{\nd}{p_2}} \biggl(\!\!\mathop{\sum_{m\in\zd:}}_{Q_{j,m}\subset Q_{\nu, k}}  \!\!  | \lambda_{j,m}|^{p_2}  \biggr)^{\frac{1}{p_2}}  \leq 
\frac{\varphi_2(2^{-\nu})}{\varphi_1(2^{-\nu})^\varrho} \varphi_1(2^{-j})^{\varrho-1}  \le \alpha_j  \varphi_1(2^{-j})^{\varrho-1},  
\qquad \nu \leq j .\notag 
\end{align}
Taking the supremum we get \eqref{thmm1:a}. 

{\em Step 2}. ~Now we prove sufficiency. The inequality \eqref{thmm1:a} coincides with \eqref{thmm1:aa}  if  we take $\varrho=1$, so we can work with \eqref{thmm1:a} and $\varrho  \le 1$. 

From \eqref{thmm1:a}, for $j\in\no$ we have
\begin{align}
&2^{s_2j}    \mathop{\sup_{\nu: \nu \leq j}}_{k\in \zd} \varphi_2(2^{-\nu}) 2^{(\nu-j)\frac{\nd}{p_2}}\biggl(\!\!\mathop{\sum_{m\in\zd:}}_{Q_{j,m}\subset Q_{\nu, k}}  \!\!
 | \lambda_{j,m}|^{p_2} \biggr)^{\frac{1}{p_2}} \nonumber \\
& \qquad  \leq   2^{j(s_2-s_1)} \alpha_j \frac{\varphi_1(2^{-j})^\varrho }{\varphi_1(2^{-j}) }   2^{s_1j} 
 \mathop{\sup_{\nu: \nu \leq j}}_{k\in \zd} \varphi_1(2^{-\nu}) 2^{(\nu-j)\frac{\nd}{p_1}}\biggl(\!\! \mathop{\sum_{m\in\zd:}}_{Q_{j,m}\subset Q_{\nu, k}}  \!\!
 | \lambda_{j,m}|^{p_1} \biggr)^{\frac{1}{p_1}}.  \label{thmm1:c}
\end{align}

If $q_1=\infty$, thus $q^*=q_2$, by \eqref{thmm1:c} and  \eqref{cond2} we clearly get
$$
\|\lambda\mid n^{s_2}_{\varphi_2,p_2,q_2}\| \leq \Big\| \Bigl\{   2^{j(s_2-s_1)} \alpha_j \frac{\varphi_1(2^{-j})^\varrho }{\varphi_1(2^{-j}) }\Bigr\}_j \mid \ell_{q_2}\Big\|\;  \|\lambda \mid n^{s_1}_{\varphi_1,p_1,\infty}\|.
$$

 If $q_1<\infty$ and  $q_2 \geq q_1$,   {then $q^*=\infty$, and  \eqref{thmm1:c}  together with} \eqref{cond2} yield

$$
\|\lambda\mid n^{s_2}_{\varphi_2,p_2,q_2}\|\leq  \|\lambda\mid n^{s_2}_{\varphi_2,p_2,q_1}\| \leq \Big\| \Bigl\{    2^{j(s_2-s_1)} \alpha_j \frac{\varphi_1(2^{-j})^\varrho }{\varphi_1(2^{-j})} \Bigl\}_j \mid \ell_{\infty}\Big\|\;  \|\lambda \mid n^{s_1}_{\varphi_1,p_1,q_1}\|\, .
$$

Finally, in case of $q_2<q_1<\infty$,  by \eqref{thmm1:c}  and  H\"older's inequality we obtain
\begin{align}
\|\lambda\mid \nb\| &\leq  \Big\| \Bigl\{   2^{j(s_2-s_1)} \frac{\varphi_2(2^{-j})}{\varphi_1(2^{-j})}\Bigr\}_j \mid \ell_{\frac{q_1q_2}{q_1-q_2}}\Big\|\;  \|\lambda\mid \na\| \nonumber \\
& \leq  \Big\| \Bigl\{    2^{j(s_2-s_1)} \alpha_j \frac{\varphi_1(2^{-j})^\varrho }{\varphi_1(2^{-j}) }\Bigr\}_j \mid \ell_{q^*}\Big\|\;  \|\lambda\mid \na\|
\end{align}
thanks to $q^*= \frac{q_1q_2}{q_1-q_2}$, see \eqref{cond2}. 

{\em Step 3.} It remains to prove the necessity of the conditions. First we prove that the embedding \eqref{embed1} implies \eqref{cond0}.

{\em Substep 3.1}   We fix $j_0\ge 0$,  $\nu_0 \le j_0$ and
 consider the sequence  $\lambda^{(j_0,\nu_0)}$ defined as follows
 \begin{equation}\label{mnec1}
 \lambda^{(j_0,\nu_0)}_{j,m} = \begin{cases}
\varphi_1(2^{-\nu_0})^{-1} & \text{if}\qquad j=j_0\quad \text{and}\quad Q_{j,m}\subset Q_{\nu_0,0},\\
0 & \text{otherwise}. 
 \end{cases}
 \end{equation}   
 Then 
 \begin{align}\label{mnec2}
\varphi_1(2^{-\nu})2^{(\nu-j)\frac{\nd}{p_1}} \biggl(\!\! \mathop{\sum_{m\in\zd:}}_{Q_{j,m}\subset Q_{\nu_0,k}} \!\! |\lambda^{(j_0,\nu_0)}_{j,m}|^{p_1}\biggr)^{\frac{1}{p_1}} \le & \\ \frac{\varphi_1(2^{-\nu})}{\varphi_1(2^{-\nu_0})} &
\begin{cases}
1 & \text{if}\qquad j=j_0\quad \text{and}\quad Q_{\nu,k} \subset Q_{\nu_0,0},\\
2^{(\nu-\nu_0)\frac{\nd}{p_1}} & \text{if}\qquad j=j_0\quad \text{and}\quad Q_{\nu_0,0}\subset Q_{\nu,k},\\
0 & \text{otherwise}. 
\end{cases} \nonumber
 \end{align}
 The function $\varphi_1$ belongs to the class ${\mathcal G}_{p_1}$ therefore $\varphi_1(2^{-\nu})\varphi_1(2^{-\nu_0})^{-1}2^{(\nu-\nu_0)\frac{\nd}{p_1}} \le 1$ if $Q_{\nu_0,0}\subset Q_{\nu,k}$ and $\varphi_1(2^{-\nu})\varphi_1(2^{-\nu_0})^{-1}\le 1$ if $Q_{\nu,k} \subset Q_{\nu_0,0}$. 
 In consequence 
\begin{equation}\label{mnec3}
\|\lambda^{(j_0,\nu_0)} |\na\| = 2^{j_0s_1} \mathop{\sup_{\nu: \nu \leq j_0}}_{k\in \zd}\!  \varphi_1(2^{-\nu}) \,2^{(\nu-j_0)\frac{\nd}{p_1}}\biggl(\!\!\! \mathop{\sum_{m\in\zd:}}_{Q_{j_0,m}\subset Q_{\nu,k}}\!\!|\lambda_{j_0,m}^{(j_0,\nu_0)}|^{p_1}\biggr)^{\frac{1}{p_1}}= 2^{j_0s_1}.
\end{equation} 
In a similar way we prove that 
\begin{equation}\label{mnec4}
\|\lambda^{(j_0,\nu_0)}|\nb\| = 2^{j_0s_2} \mathop{\sup_{\nu: \nu \leq j_0}}_{k\in \zd}\!  \varphi_2(2^{-\nu}) \,2^{(\nu-j_0)\frac{\nd}{p_2}} \biggl(\!\!\! \mathop{\sum_{m\in\zd:}}_{Q_{j_0,m}\subset Q_{\nu,k}}\!\! |\lambda^{(j_0,\nu_0)}_{j_0,m}|^{p_2}\biggr)^{\frac{1}{p_2}}
= 2^{j_0s_2} \frac{\varphi_2(2^{-\nu_0})}{\varphi_1(2^{-\nu_0})}.
\end{equation}
So if the embedding \eqref{embed1} holds, then 
\begin{equation}\label{mnec4a}
\frac{\varphi_2(2^{-\nu_0})}{\varphi_1(2^{-\nu_0})} \le C 2^{j_0(s_1-s_2)}.
\end{equation}
Moreover  the constant $C$ is independent of $j_0$ and $\nu_0$. So if  $p_1\ge p_2$, i.e., $\varrho=1$,  we can fix  $j_0=0$. This  proves \eqref{cond0}.

{\em Substep 3.2.}   Let $p_1 < p_2$, i.e., $\varrho= \frac{p_1}{p_2}$. Once more we fix $j_0\in {\mathbb N}_0$ and $\nu_0\in \zz$ with $\nu_0\leq j_0$. Let $N\in \mathbb N$ be such that $1\le N\le 2^{(j_0-\nu_0)\nd}$. We define a sequence $\lambda^{(N)}=(\lambda^{(N)}_{j,m})$ such that 
\begin{align}\label{necm4b1}
(1) &\qquad  \lambda^{(N)}_{j,m}=1\quad\text{or}\quad\lambda^{(N)}_{j,m}=0\quad\text{for any}\quad (j,m),\\
(2) & \qquad  \lambda^{(N)}_{j,m}=0\quad\text{if}\quad j\not=j_0\quad\text{or}\quad Q_{j_0,m}\nsubseteq Q_{\nu_0,0},\\
(3) & \qquad  \lambda^{(N)}_{j_0,m}=1\quad\text{exactly}\quad N\quad\text{times},\\
(4) & \qquad \text{if} \quad Q_{\nu,k}\subset Q_{\nu_0,0}, \quad \nu_0<\nu<j_0 ,\quad  \text{then}\quad Q_{\nu,k}\quad\text{contains} \quad \text{at most}\quad  2^{\nd(\nu_0-\nu)}N +2  \nonumber\\
& \qquad \text{cubes}\quad Q_{j_0,m} \quad\text{such that}\quad \lambda^{(N)}_{j_0,m}=1 .\label{necm4b4}
\end{align}

If $N=2^{(j_0-\nu_0)\nd}$, then we can simply take $\lambda^{(N)}_{j_0,m} =1$ for any cube $Q_{j_0,m}\subset Q_{\nu_0,0}$. So let us assume  $N<2^{(j_0-\nu_0)\nd}$.   We put $\up{x} = \min\{k\in \zz:\; k\ge x \}  $, $x\in \rr$. 

Let $M_1=\up{2^{-\nd}N}$. If $M_1=1$, i.e., $N\le 2^\nd$, we put $\lambda^{(N)}_{j_0,m}=1$ for at most one cube $Q_{j_0,m}$ in any cube $Q_{\nu_0+1,k}\subset Q_{\nu_0,0}$ in such a way that we do not exceed the total number $N$ and we finish the construction.  \\
Let $M_1> 1$ and let $Q_{\nu_0,0}=\bigcup_{i=1}^{2^\nd} Q_{\nu_0+1,k_i}$. Now we represent $N$ as the following sum 
\begin{equation}
N=N^{(1)}_{k_1}+\ldots + N^{(1)}_{k_{2^\nd}}
\end{equation}     
where
\begin{equation}\label{mnec5}
N^{(1)}_{k_i} = 
\begin{cases} 
M_1 & \qquad \text{if} \qquad iM_1\le N\\
N-(i-1)M_1& \qquad \text{if} \qquad (i-1)M_1<N<iM_1,\\
0& \qquad \text{if} \qquad N = N^{(1)}_{k_1}+\ldots +N^{(1)}_{k_{i-1}}. 
\end{cases}
\end{equation}
We group the elements $\lambda^{(N)}_{j_0,m}$ of the sequence in such a way that exactly $N^{(1)}_{k_i}$ elements related to the cube $Q_{\nu_0+1,k_i}$ are equal to $1$. 

 Next we repeat the procedure for any cube $Q_{\nu_0+1,k_i}$. We define $M^{(2)}_{k_i}=\up{2^{-\nd}N^{(1)}_{k_i}}$. If $M^{(2)}_{k_i}=1$, i.e., $M^{(2)}_{k_i}\le 2^\nd$, we put $\lambda^{(N)}_{j_0,m}=1$ for at most one cube $Q_{j_0,m}$ in any cube $Q_{\nu_0+2,k}\subset Q_{\nu_0+1,k_i}$ in such a way that we do not exceed the total number $N^{(1)}_{k_i}$ and we finish the construction on the cube $Q_{\nu_0+1,k_i}$. \\
If $M^{(2)}_{k_i}> 1$ and  $Q_{\nu_0+1,k_i}=\bigcup_{j=1}^{2^\nd} Q_{\nu_0+2,k_j}$, then  we represent $N^{(1)}_{k_i}$ as a sum 
\begin{equation}
N^{(1)}_{k_i}=N^{(2)}_{k_i,k_1}+\ldots+N^{(2)}_{k_i,k_j}+\ldots + N^{(2)}_{k_i,k_{2^\nd}}
\end{equation}   
where   the numbers  $N^{(2)}_{k_i,k_j}$ are defined in a similar way to \eqref{mnec5} with $N^{(1)}_{k_i}$ instead of $N$ and 
$M^{(2)}_{k_i}$ instead of $M_1$. 

In the next steps we define $M^{(3)}_{k_i,k_j}=\up{2^{-\nd}N^{(2)}_{k_i,k_j}}$ and so on. The procedure  stops after at most $2^{(j_0-\nu_0)}$ steps. One can easily see that for any $\nu$, $\nu_0<\nu = \nu_0+\eta < j_0$ we have
\begin{equation}
N^{(\eta)}_{k_i,k_j,\ldots, k_\ell}\le 2^{-\nd(\nu-\nu_0)} N + \sum_{i=0}^{\eta-1} 2^{-\nd i}\le 2^{-\nd(\nu-\nu_0)} N + \frac{1}{1-2^{-\nd}}.
\end{equation}

Now we take  $j_0=0$ and $N=\up{2^{(j_0-\nu_0)\nd}\varphi_1(2^{-\nu_0})^{-p_1}}\le 2^{(j_0-\nu_0)\nd}$. If $Q_{\nu,k}\subset Q_{\nu_0,0}$, then 
\begin{align}\label{mnec6}
\varphi_1(2^{-\nu})2^{(\nu-j_0)\frac{\nd}{p_1}}  \biggl(\!\! \mathop{\sum_{m\in\zd:}}_{Q_{j_0,m}\subset Q_{\nu,k}}\!\! |\lambda^{(N)}_{j,m}|^{p_1}\biggr)^{\frac{1}{p_1}}
 \le 2^{\frac{1}{p_1}} \varphi_1(2^{-\nu})2^{(\nu-j_0)\frac{\nd}{p_1}}  \max( 2^{-\nd(\nu-\nu_0)} N, 2)^{\frac{1}{p_1}}  \\ 
\le 2^{\frac{1}{p_1}} \max\left(\frac{\varphi_1(2^{-\nu})}{\varphi_1(2^{-\nu_0})}, 2^{\frac{1}{p_1}}\varphi_1(2^{-\nu})2^{(\nu-j_0)\frac{\nd}{p_1}}\right) \le C
 \nonumber
 \end{align}
since $\nu_0<\nu\le 0$, $\varphi_1\in \mathcal{G}_{p_1}$ and  $\varphi_1(1)=1$. The constant $C$ is independent of $\nu_0$. 
In consequence $\|\lambda^{(N)}|\na\| \le C$. So if the embedding \eqref{embed1} holds, then 
\begin{align}
\frac{\varphi_2(2^{-\nu_0})}{\varphi_1(2^{-\nu_0})^\varrho} \le \varphi_2(2^{-\nu_0})2^{\nu_0\frac{\nd}{p_2}} \big(\varphi_1(2^{-\nu_0})^{-p_1}2^{-\nu_0\nd}\big)^\frac{1}{p_2} \le  \varphi_2(2^{-\nu_0})2^{\nu_0\frac{\nd}{p_2}} N^\frac{1}{p_2}\le \|\lambda^{(N)}|\nb\| \le C, \nonumber
\end{align}
proving  \eqref{cond0} when $p_1<p_2$.

{\em Step 4.} We prove that the assumptions  \eqref{cond2} is  necessary. 
If there exists $\nu_0\le 0$ such that $\alpha_0=\frac{\varphi_2(2^{-\nu_0)}}{\varphi_1(2^{-\nu_0})^\varrho}$, then also for  $j\in \mathbb{N}$ we can find $\nu_j\le 0$ such that $\alpha_j=\frac{\varphi_2(2^{-\nu_j)}}{\varphi_1(2^{-\nu_j})^\varrho}$. If the supremum defining $\alpha_0$ is not attained, then there exist   $\nu_0\le 0$, and  in consequence $\nu_j\le j$, such that
\begin{equation} \label{nu-j}
\frac{\varphi_2(2^{-\nu_j)}}{\varphi_1(2^{-\nu_j})^\varrho}\le  \alpha_j < 2\frac{\varphi_2(2^{-\nu_j)}}{\varphi_1(2^{-\nu_j})
^\varrho}, \quad j\in \mathbb{N}_0.
\end{equation}
We used the modified version of the sequences constructed in Substep 3.2. 

{\em Substep 4.1}
First we assume that $p_1\ge p_2$, i.e., $\varrho=1$. 
Let $q_1\leq q_2$, i.e., $q^*=\infty$, and $i\in \mathbb{N}_0$. We consider the sequence  $\lambda^{(i)}=(\lambda^{(i)}_{j,m})$  defined by 
\begin{equation} 
\lambda^{(i)}_{j,m}=\begin{cases}
2^{-is_1} \alpha_i\varphi_2(2^{-\nu_i})^{-1}  &\text{ if} \qquad j=i \quad \text{and}\quad Q_{i,m} \subset Q_{\nu_i,0}, \\
0 & \text{otherwise}. 
\end{cases}
\end{equation}  
We prove that there is a positive $C>0$ such that  $\|\lambda^{(i)}|\na\|\le C $ for any $i$. 

Let $\nu_i\le \nu\le i$. Then 
\begin{align}\label{mnec7}
\varphi_1(2^{-\nu})  2^{(\nu-i)\frac{\nd}{p_1}}\biggl(\!\! \mathop{\sum_{m\in\zd:}}_{Q_{i,m}\subset Q_{\nu, k}}  \!\!
| \lambda_{i,m}^{(i)}|^{p_1} \biggr)^{\frac{1}{p_1}} \le 
C 2^{-is_1}\frac{\varphi_1(2^{-\nu})}{\varphi_1(2^{-\nu_i})}   2^{(\nu-i)\frac{\nd}{p_1}}  2^{(i-\nu)\frac{\nd}{p_1}}\le C 2^{-is_1} .
\end{align}

If $\nu<\nu_i$, then 
\begin{align}\label{mnec8}
\varphi_1(2^{-\nu}) & 2^{(\nu-i)\frac{\nd}{p_1}}\biggl(\!\!\mathop{\sum_{m\in\zd:}}_{Q_{i,m}\subset Q_{\nu, k}}  \!\!
| \lambda_{i,m}^{(i)}|^{p_1} \biggr)^{\frac{1}{p_1}} \le   
\varphi_1(2^{-\nu_i}) \frac{\varphi_1(2^{-\nu})}{\varphi_1(2^{-\nu_i})} 2^{(\nu-i)\frac{\nd}{p_1}}\biggl(\!\!\mathop{\sum_{m\in\zd:}}_{Q_{i,m}\subset Q_{\nu_i, 0}}  \!\!
| \lambda_{i,m}^{(i)}|^{p_1} \biggr)^{\frac{1}{p_1}}  \nonumber \\
&\le \varphi_1(2^{-\nu_i})  2^{(\nu_i-i)\frac{\nd}{p_1}}\biggl(\!\! \mathop{\sum_{m\in\zd:}}_{Q_{i,m}\subset Q_{\nu_i, 0}}  \!\!
| \lambda_{i,m}^{(i)}|^{p_1} \biggr)^{\frac{1}{p_1}} \le C  2^{-is_1}
\end{align}
where the last but one inequality follows from the inclusion $\varphi_1\in \mathcal{G}_{p_1}$. 
The inequalities \eqref{mnec7} and \eqref{mnec8} give us $\|\lambda^{(i)}|\na\|\le C$. So if the embedding \eqref{embed1} holds, then 
\begin{align}
2^{i(s_2-s_1)}\alpha_i =  2^{is_2}\varphi_2(2^{-\nu_i})  2^{(\nu_i-i)\frac{\nd}{p_2}}\biggl(\!\!\mathop{\sum_{m\in\zd:}}_{Q_{i,m}\subset Q_{\nu_i, 0}} \!\!  | \lambda_{i,m}^{(i)}|^{p_2} \biggr)^{\frac{1}{p_2}} \le C \|\lambda^{(i)}|\na \|\le C.
\end{align}
Thus $\|\{2^{i(s_2-s_1)}\alpha_i\}_i |\ell_\infty\|\le C$ which is \eqref{cond2} in this case.

Now let $q_2<q_1$, i.e., $q^*<\infty$. Let $\mu=(\mu_j)_j\in \ell_{q_1}$ and $\|\mu|\ell_{q_1}\|=1$.  We consider the  sequence  $\lambda=(\lambda_{j,m})$ defined by the formula
\begin{equation}  \nonumber
\lambda_{j,m}=\begin{cases}
2^{-js_1} \alpha_j\varphi_2(2^{-\nu_j})^{-1} \mu_j &\text{ if} \qquad Q_{j,m}\subset Q_{\nu_j,0},\\
0 & \text{otherwise}. 
\end{cases}
\end{equation}
In the same way as above we show that  $\|\lambda|\na\|\le C \|\mu|\ell_{q_1}\|$. So if the embedding \eqref{embed1} holds, then 
\begin{align}
\sum_{j=0}^\infty 2^{j(s_2-s_1)q_2} \alpha_j^{q_2}|\mu_j
|^{q_2}  = &\sum_{j=0}^\infty 2^{js_2q_2} \varphi_2(2^{-\nu_j})^{q_2} 2^{q_2(\nu_j-j)\frac{\nd}{p_2}} \biggl(\!\!\mathop{\sum_{m\in\zd:}}_{Q_{j,m}\subset Q_{\nu_j, 0}} \!\!
 | \lambda_{j,m}|^{p_2} \biggr)^{\frac{q_2}{p_2}} \nonumber\\  
    &\le   \|\lambda|\nb\|^{q_2} \le   C   \|\lambda|\na\|^{q_2} \le C . \nonumber 
\end{align} 

Any element of the   sequence $\left(2^{j(s_2-s_1)} \alpha_j\right)_j$ is positive therefore
\begin{align*}
  \|2^{j(s_2-s_1)} \alpha_j|\ell_{q^*}\|^{q_2} &=
\|(2^{j(s_2-s_1)} \alpha_j)^{q_2}|\ell_{q^*/q_2}\|
\\
 &=\sup_{\|\varkappa|\ell_{(q^*/q_2)'}\|=1;\varkappa_j\ge 0 }\
\sum_{j=0}^\infty 2^{j(s_2-s_1)q_2} \alpha_j^{q_2}\varkappa_j\\
&\le 
\sup_{\|\mu|\ell_{q_1}\|=1}\
\sum_{j=0}^\infty 2^{j(s_2-s_1)q_2} \alpha_{j}^{q_2}|\mu_j|^{q_2} \le C , 
\nonumber
\end{align*}
since $(\frac{q^*}{q_2})'=\frac{q_1}{q_2}$.

{\em Substep 4.2} 
We deal now with the case $p_1 < p_2$, i.e., $\varrho= \frac{p_1}{p_2}$. Let $q_1\le q_2$, i.e., $q^*=\infty$, and $i\in\mathbb{N}$. 
Consider the construction  explained in Substep 3.2, with $j_0$ and $\nu_0$ replaced by $i$ and $\nu_i$, respectively, where  $\nu_i$ satisfies \eqref{nu-j}. Moreover, let 
$N_i=\up{2^{(i-\nu_i)\nd}\varphi_1(2^{-\nu_i})^{-p_1}  \varphi_1(2^{-i})^{p_1} }$ and let  $\lambda^{(N_i)}$ be the  sequence  described in the above mentioned Substep 3.2.
Define the sequence $\beta^{(i)}$ by
\begin{equation} 
\beta^{(i)}_{j,m}=\begin{cases}
2^{-is_1} \alpha_i\varphi_2(2^{-\nu_i})^{-1}  \varphi_1(2^{-\nu_i})^{\rho} \varphi_1(2^{-i})^{-1}   \lambda_{j,m}^{(N_i)}      &\text{ if} \qquad j=i \quad \text{and}\quad Q_{i,m} \subset Q_{\nu_i,0}, \\
0 & \text{otherwise}. 
\end{cases}
\end{equation}  
We prove that there is a positive $C>0$ such that  $\|\beta^{(i)}|\na\|\le C $ for any $i$. 

By using \eqref{nu-j} and the fact that $\varphi_1\in \mathcal{G}_{p_1}$, we have,  in case of $\nu_i\le \nu\le i$, that 
\begin{align} 
\varphi_1(2^{-\nu})  2^{(\nu-i)\frac{\nd}{p_1}}\biggl(\!\!\mathop{\sum_{m\in\zd:}}_{Q_{i,m}\subset Q_{\nu, k}}  \!\!
| \beta_{i,m}^{(i)}|^{p_1} \biggr)^{\frac{1}{p_1}} \le 
2 \frac{\varphi_1(2^{-\nu})}{\varphi_1(2^{-i})}   2^{-is_1}2^{(\nu-i)\frac{\nd}{p_1}} \left(  2^{-\nd(\nu-\nu_i) }N_i+\frac{1}{1-2^{-\nd}}  \right)^{\frac{1}{p_1}} \le C 2^{-is_1}, \nonumber
\end{align}
and, in case of $\nu<\nu_i$,  
\begin{align}
\varphi_1(2^{-\nu}) & 2^{(\nu-i)\frac{\nd}{p_1}}\biggl(\!\!\mathop{\sum_{m\in\zd:}}_{Q_{i,m}\subset Q_{\nu, k}}  \!\!
| \beta_{i,m}^{(i)}|^{p_1} \biggr)^{\frac{1}{p_1}} \le   
2 \frac{\varphi_1(2^{-\nu})}{\varphi_1(2^{-i})}   2^{-is_1}2^{(\nu-i)\frac{\nd}{p_1}} N_i^{\frac{1}{p_1}}    
 \le C 2^{-is_1} . \nonumber
\end{align}
The above  inequalities show that  $\|\beta^{(i)}|\na\|\le C $. So, if the embedding  \eqref{embed1} holds, then 
\begin{align}
C \geq  \|\beta^{(i)}|\nb \| & \geq  2^{s_2i} \varphi_2(2^{-\nu_i})2^{(\nu_i-i)\frac{\nd}{p_2}}\biggl(\mathop{\sum_{m\in\zd:}}_{Q_{i,m}\subset Q_{\nu_i, 0}}  | \beta_{i,m}^{(i)}|^{p_2} \biggr)^{\frac{1}{p_2}}  \geq \nonumber \\
& \geq 2^{(s_2-s_1)i} 2^{(\nu_i-i)\frac{\nd}{p_2}}  \alpha_i N_i^{\frac{1}{p_2}} \geq  2^{(s_2-s_1)i}  \alpha_i  \frac{\varphi_1(2^{-i})^\varrho}{\varphi_1(2^{-i})}. \nonumber
\end{align}
Thus $\big\|\bigl\{   2^{(s_2-s_1)i} \alpha_i \frac{\varphi_1(2^{-i})^\varrho}{\varphi_1(2^{-i})}\bigr\}_i |\ell_\infty\big\|\le C$.

Now let $q_2<q_1$, i.e., $q^*<\infty$. Let $\mu=(\mu_j)_j\in \ell_{q_1}$ and consider the  sequence  $\beta=(\beta_{j,m})$ defined by 
\begin{equation}  \nonumber
\beta_{j,m}=\begin{cases}
2^{-js_1} \alpha_j\varphi_2(2^{-\nu_j})^{-1}  \varphi_1(2^{-\nu_j})^{\rho} \varphi_1(2^{-j})^{-1}   \lambda_{j,m}^{(N_j)} \,   \mu_j &\text{ if} \qquad Q_{j,m}\subset Q_{\nu_j,0}\\
0 & \text{otherwise}. 
\end{cases}
\end{equation}
In the same way as above we show that  $\|\beta|\na\|\le C \|\mu|\ell_{q_1}\|$. So if the embedding \eqref{embed1} holds, then 
\begin{align}
\sum_{j=0}^\infty 2^{j(s_2-s_1)q_2}  \alpha_j^{q_2}\frac{\varphi_1(2^{-j})^{\varrho q_2}}{\varphi_1(2^{-j})^{q_2}} |\mu_j |^{q_2}  &= \sum_{j=0}^\infty 2^{js_2q_2} \varphi_2(2^{-\nu_j})^{q_2} 2^{q_2(\nu_j-j)\frac{\nd}{p_2}} \biggl(\!\!\mathop{\sum_{m\in\zd:}}_{Q_{j,m}\subset Q_{\nu_j, 0}} \!\!
 | \beta_{j,m}|^{p_2} \biggr)^{\frac{q_2}{p_2}} \nonumber\\  
    &\le   \|\beta|\nb\|^{q_2} 
    \le C . \nonumber 
\end{align} 
Thus 
$$\left\{ 2^{j(s_2-s_1)q_2} \alpha_j^{q_2}\frac{\varphi_1(2^{-j})^{\varrho q_2}}{\varphi_1(2^{-j})^{q_2}} {\mu_j}^{q_2} \right\}_j \in \ell_1 \qquad \text{for all } (\mu_j)_j\in \ell_{q_1}$$
which is equivalent to 
$$\left\{ 2^{j(s_2-s_1)q_2} \alpha_j^{q_2}\frac{\varphi_1(2^{-j})^{\varrho q_2}}{\varphi_1(2^{-j})^{q_2}} {\eta_j} \right\}_j \in \ell_1 \quad \text{for all } (\eta_j)_j\in \ell_{r_1}$$
with $r_1=\frac{q_1}{q_2}$.
But this implies that 
$$\left\{ 2^{j(s_2-s_1)q_2} \alpha_j^{q_2}\frac{\varphi_1(2^{-j})^{\varrho q_2}}{\varphi_1(2^{-j})^{q_2}} \right\}_j \in \ell_{r_1'}
$$
which means that 
$$\left\{ 2^{j(s_2-s_1)} \alpha_j \frac{\varphi_1(2^{-j})^{\varrho}}{\varphi_1(2^{-j})} \right\}_j \in \ell_{q^*}.$$

{\em Step 5.}\quad  It remains to prove the non-compactness of \eqref{embed1}.  This follows by the same method as in 
\cite[Thm.~3.2]{hs12}, i.e., we can take a sequence $\lambda^{(\mu)}=\{\lambda^{(\mu)}_{j,m}\}_{j,m}$, $\mu\in \mathbb{N}$, 
\[ \lambda^{(\mu)}_{j,m}=
\begin{cases}
1 ,& \quad \text{if}\quad j=0 \;\text{and}\quad m=(\mu,0,0,\ldots,0),\\
0, &\quad \text{otherwise}.
\end{cases}
\]
Then $\|\lambda^{(\mu)}|\na\|=1$ and  $\|\lambda^{(\mu_1)}-\lambda^{(\mu_2)}|\nb\|\ge 1$ if $\mu_1\ne\mu_2$.
\end{proof}

\begin{remark}\label{rem-n-tn}
Following the above proof one observes, that $\na\hookrightarrow\nb$ if and only if $\tna\hookrightarrow\tnb$, where the latter spaces have been introduced in Section~\ref{wavelet}. The first term in \eqref{norm-tn} can just be treated as the term with $j=0$ in the argument above.
\end{remark}

\begin{examples} \label{rmk2}
  We explicate  Theorem~\ref{main} for a few settings as mentioned in Example~\ref{exm-Gp}. 
\begin{enumerate}[{\bfseries (a)}]  
\item
In the particular case of $\varphi_i(t)=t^{\frac{\nd}{u_i}}$, $0<p_i\leq u_i<\infty$,  $i=1,2$, condition \eqref{cond0} means
$\frac{1}{u_2}\le \frac{1}{u_1}\min(1,\frac{p_1}{p_2})$ that is equivalent to 
\begin{equation} \label{cond-classic1}
u_1\leq u_2 \qquad \text{and} \qquad \frac{p_2}{u_2}\leq \frac{p_1}{u_1}.
\end{equation}
Moreover, since
$$
\left\{   2^{j(s_2-s_1)} \frac{\varphi_2(2^{-j})}{\varphi_1(2^{-j})}\right\}_j = \big\{   2^{j(s_2-s_1+\frac{\nd}{u_1}-\frac{\nd}{u_2})}\big\}_j\ ,
$$ 
we recover exactly the conditions for classical  Besov-Morrey sequence spaces in Theorem 3.2 of \cite{hs12}.
\item 
Besides the `classical' example given above, we consider the  functions  $\varphi_{u_i,v_i}$ defined by \eqref{example1}, $0<u_i,v_i<\infty$, $i=1,2$. Now one can easily calculate that condition \eqref{cond0}  is equivalent to $\frac{v_1}{v_2}\le \varrho$ and the condition \eqref{cond2} is equivalent to 
  \begin{equation}\label{loc-loc}
   \begin{cases}
    \frac{s_1-s_2}{\nd} > \max\left\{0,  \frac{1}{u_1}-\frac{1}{u_2}, \frac{p_1}{u_1}\left(\frac{1}{p_1}-\frac{1}{p_2}\right)\right\}, & \text{if}\quad q_1>q_2, \smallskip \\
    \frac{s_1-s_2}{\nd} \geq  \max\left\{0,  \frac{1}{u_1}-\frac{1}{u_2}, \frac{p_1}{u_1}\left(\frac{1}{p_1}-\frac{1}{p_2}\right)\right\}, & \text{if}\quad q_1\leq q_2. \end{cases}
  \end{equation}
Please note that  \eqref{loc-loc} coincides with the conditions formulated in  \cite{hs13} for embeddings of Besov-Morrey spaces defined on bounded domains. 
\item
Finally we return to the setting in Example~\ref{exm-Gp}(iii), 
\[
  \varphi_i(t)=\begin{cases} t^{\frac{\nd}{u_i}}, & 0<t<1, \\ 1, & t\geq 1,\end{cases}  
 \]
where $u_i\geq p_i$, $i=0,1$. Formally this can be seen as an extension of the previous example to $v_i=\infty$, $i=1,2$. Please note that the sequence spaces correspond via Theorem \ref{waveletth} to the local Besov-Morrey spaces, cf. Remark \ref{rem-coinc}.  Since $\sup_{t>1} \varphi_2(t)/\varphi_1(t)^\varrho =1$, {\eqref{cond0} is satisfied, thus} it remains to deal with the condition  {\eqref{cond2}},  which leads to \eqref{loc-loc} again.
\end{enumerate}
\end{examples}
 
 Next we collect a number of interesting and useful implications of Theorem~\ref{main}.

\begin{corollary}  \label{mainb}
Let $s_i\in\rr$, $0<p_i<\infty$, $0<q_i\leq \infty$, and $\varphi_i\in {\mathcal G}_{p_i}$, for $i=1,2$. 
Let $\varphi_1(1)=\varphi_2(1)=1$ and $ \varrho=\min(1,\frac{p_1}{p_2})$. 
We assume that the sequence $\alpha_j= \sup_{\nu\le j}\frac{\varphi_2(2^{-\nu})}{\varphi_1(2^{-\nu})^\varrho}$ converges to some $\alpha\geq 1$ and that \eqref{cond0} is satisfied.
\bit
\item[{\upshape\bfseries (i)}]
If $p_1\ge p_2$, then the embedding  \eqref{embed1}
is continuous  if and only if $s_1>s_2$ or $s_1=s_2$ and $q_1\le q_2$. 
\item[{\upshape\bfseries (ii)}]
If $p_1<p_2$, then the embedding  \eqref{embed1}
is continuous  if and only if
\begin{align}
\label{bcond2}
\left\{   2^{j(s_2-s_1)} \varphi_1(2^{-j})^{\frac{p_1}{p_2}-1}\right\}_j  \in \ell_{q^*}\quad \text{where}\quad \frac{1}{q^*} = \left(\frac{1}{q_2}-\frac{1}{q_1}\right)_+ .
\end{align}
\eit
\end{corollary}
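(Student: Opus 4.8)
The plan is to derive Corollary~\ref{mainb} directly from Theorem~\ref{main} by simplifying the two conditions \eqref{cond0} and \eqref{cond2} under the extra hypothesis that $\alpha_j\to\alpha\in[1,\infty)$. Since \eqref{cond0} is assumed, only \eqref{cond2} has to be analysed, and the convergence $\alpha_j\to\alpha\geq 1$ means that the sequence $(\alpha_j)_j$ is bounded above and below by positive constants for large $j$; hence in \eqref{cond2} the factor $\alpha_j$ is harmless and can be replaced by the constant $1$ without affecting membership in $\ell_{q^*}$. Thus \eqref{cond2} is equivalent to
\[
\left\{ 2^{j(s_2-s_1)}\,\frac{\varphi_1(2^{-j})^{\varrho}}{\varphi_1(2^{-j})}\right\}_j \in \ell_{q^*}.
\]

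For part (i), where $p_1\geq p_2$, we have $\varrho=1$, so $\varphi_1(2^{-j})^{\varrho}/\varphi_1(2^{-j})=1$ and the condition collapses to $\{2^{j(s_2-s_1)}\}_j\in\ell_{q^*}$. It then remains to observe the elementary fact that a geometric-type sequence $\{2^{j(s_2-s_1)}\}_j$ lies in $\ell_{q^*}$ precisely when either $s_2-s_1<0$, i.e. $s_1>s_2$, or $s_2-s_1=0$ and $q^*=\infty$; and by definition $q^*=\infty$ means $\left(\frac1{q_2}-\frac1{q_1}\right)_+=0$, i.e. $q_1\leq q_2$. This yields exactly the stated dichotomy $s_1>s_2$, or $s_1=s_2$ and $q_1\leq q_2$. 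For part (ii), where $p_1<p_2$, we have $\varrho=p_1/p_2<1$, and the simplified condition is precisely \eqref{bcond2}. So in this case nothing more has to be done beyond the replacement of $\alpha_j$ by a constant; I would just spell out that $\varphi_1(2^{-j})^{\varrho}/\varphi_1(2^{-j})=\varphi_1(2^{-j})^{p_1/p_2-1}$.

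The one point requiring a little care — and the only genuine obstacle — is justifying that $\alpha_j$ may be dropped from \eqref{cond2}. One direction is trivial: since $\alpha=\lim_j\alpha_j\geq 1$ and $\alpha_j\uparrow$ (the defining supremum is over $\nu\le j$, so $\alpha_j$ is nondecreasing in $j$), we have $1\le\alpha_0\le\alpha_j\le\alpha$ for all $j$, whence the weighted sequence in \eqref{cond2} is comparable term-by-term to the one in the simplified condition, with comparison constants $\alpha_0$ and $\alpha$ independent of $j$. Here one uses that $\alpha_0\ge 1>0$, which follows from $\varphi_1(1)=\varphi_2(1)=1$ giving $\alpha_j\ge \varphi_2(1)/\varphi_1(1)^{\varrho}=1$. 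Therefore the two $\ell_{q^*}$-membership statements are equivalent, and invoking Theorem~\ref{main} finishes the proof. I would write this as a short paragraph, then treat (i) and (ii) as two brief items using the geometric-sequence observation for (i).

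\begin{proof}
Since \eqref{cond0} is assumed to hold, Theorem~\ref{main} tells us that \eqref{embed1} is continuous if and only if \eqref{cond2} holds. The defining supremum $\alpha_j=\sup_{\nu\le j}\varphi_2(2^{-\nu})/\varphi_1(2^{-\nu})^{\varrho}$ is nondecreasing in $j$, and $\alpha_j\ge \varphi_2(1)/\varphi_1(1)^{\varrho}=1$ because $\varphi_1(1)=\varphi_2(1)=1$. Hence, using $\alpha_j\to\alpha$, we have $1\le\alpha_j\le\alpha<\infty$ for all $j\in\mathbb{N}_0$, so
\[
\frac{\varphi_1(2^{-j})^{\varrho}}{\varphi_1(2^{-j})}\;\le\; \alpha_j\,\frac{\varphi_1(2^{-j})^{\varrho}}{\varphi_1(2^{-j})}\;\le\; \alpha\,\frac{\varphi_1(2^{-j})^{\varrho}}{\varphi_1(2^{-j})},\qquad j\in\mathbb{N}_0.
\]
Consequently \eqref{cond2} is equivalent to
\begin{equation}\label{bcond2-simpl}
\left\{ 2^{j(s_2-s_1)}\,\frac{\varphi_1(2^{-j})^{\varrho}}{\varphi_1(2^{-j})}\right\}_j \in \ell_{q^*},\qquad \frac{1}{q^*}=\left(\frac{1}{q_2}-\frac{1}{q_1}\right)_+ .
\end{equation}

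\textbf{(i)} If $p_1\ge p_2$, then $\varrho=1$, hence $\varphi_1(2^{-j})^{\varrho}/\varphi_1(2^{-j})=1$, and \eqref{bcond2-simpl} reads $\{2^{j(s_2-s_1)}\}_j\in\ell_{q^*}$. For a geometric sequence this holds precisely when $s_2-s_1<0$, or $s_2-s_1=0$ and $q^*=\infty$; and $q^*=\infty$ is, by definition of $q^*$, equivalent to $q_1\le q_2$. This gives the claim.

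\textbf{(ii)} If $p_1<p_2$, then $\varrho=p_1/p_2<1$, and since $\varphi_1(2^{-j})^{\varrho}/\varphi_1(2^{-j})=\varphi_1(2^{-j})^{\frac{p_1}{p_2}-1}$, condition \eqref{bcond2-simpl} is exactly \eqref{bcond2}. This completes the proof.
\end{proof}
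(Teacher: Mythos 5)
Your proof is correct and follows exactly the intended route: the paper states this corollary without proof as a direct implication of Theorem~\ref{main}, and your derivation — observing that $1\le\alpha_0\le\alpha_j\le\alpha<\infty$ (monotonicity of the supremum plus the normalisation $\varphi_1(1)=\varphi_2(1)=1$) so that the factor $\alpha_j$ in \eqref{cond2} can be absorbed into constants, then specialising $\varrho$ in the two cases — is the natural and complete way to fill in that gap. No issues.
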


We recall that $b^s_{p,q}$,  { $s\in \rr$, $0<p,q\le \infty$, denote the classical Besov sequence  spaces}, cf. Remark \ref{seq-besov-rem}. Now we extend the above definition to the case $p=\infty$.

We have the following observation from Lemma~\ref{equiv-norm}.

\begin{corollary}\label{cor-infty}
  Let $0<p<\infty$, $0<q\leq \infty$, $s\in \rr$, and $\varphi\in \Gp$.
  \bit
  \item[{\bfseries\upshape (i)}]
If $\ \inf\limits_{t>0} \varphi(t) >0$, then $\n \hookrightarrow b^s_{\infty,q}$.
  \item[{\bfseries\upshape (ii)}]
If $\ \sup\limits_{t>0} \varphi(t)<\infty$, then $b^s_{\infty,q} \hookrightarrow \n$.
\eit
In particular, if $\ 0<\inf\limits_{t>0} \varphi(t)\leq \sup\limits_{t>0} \varphi(t)<\infty$, then $\n = b^s_{\infty,q}$ (in the sense of equivalent norms).
\end{corollary}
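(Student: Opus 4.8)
The plan is to deduce both inclusions directly from the equivalent quasi-norm supplied by Lemma~\ref{equiv-norm}. First I would recall that the extension of the classical Besov sequence spaces $b^s_{p,q}$ to $p=\infty$ is
\[
\|\lambda\mid b^s_{\infty,q}\|=\biggl(\sum_{j=0}^\infty 2^{jsq}\,\sup_{m\in\zd}|\lambda_{j,m}|^q\biggr)^{1/q}
\]
(with the usual modification if $q=\infty$), which by \eqref{M1p} is precisely $n^s_{\varphi_0,p,q}(\rd)$ with $\varphi_0\equiv1$. Writing
\[
A_j(\lambda):=\mathop{\sup_{\nu\le j}}_{k\in\zd}\;\varphi(2^{-\nu})\,2^{(\nu-j)\frac{\nd}{p}}\biggl(\;\mathop{\sum_{m\in\zd:}}_{Q_{j,m}\subset Q_{\nu,k}}\!\!|\lambda_{j,m}|^p\biggr)^{1/p},\qquad j\in\no,
\]
Lemma~\ref{equiv-norm} gives $\|\lambda\mid\n\|\sim\bigl(\sum_{j}2^{jsq}A_j(\lambda)^q\bigr)^{1/q}$, so the whole proof reduces to sandwiching $A_j(\lambda)$ between two constant multiples of $\sup_{m}|\lambda_{j,m}|$, with constants independent of $j$ and $\lambda$.

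For \textbf{(i)} I would retain only the single index $\nu=j$ in the supremum defining $A_j(\lambda)$: then $Q_{j,m}\subset Q_{j,k}$ forces $m=k$, the inner sum reduces to $|\lambda_{j,k}|^p$ and the dyadic factor equals $1$, so $A_j(\lambda)\ge\varphi(2^{-j})\sup_{m}|\lambda_{j,m}|\ge\bigl(\inf_{t>0}\varphi(t)\bigr)\sup_{m}|\lambda_{j,m}|$. Raising to the power $q$, multiplying by $2^{jsq}$ and summing over $j$ then yields $\|\lambda\mid b^s_{\infty,q}\|\lesssim\|\lambda\mid\n\|$, that is, $\n\hookrightarrow b^s_{\infty,q}$.

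For \textbf{(ii)} I would use that for every $\nu\le j$ and $k\in\zd$ there are exactly $2^{\nd(j-\nu)}$ dyadic cubes $Q_{j,m}$ contained in $Q_{\nu,k}$, whence $\sum_{Q_{j,m}\subset Q_{\nu,k}}|\lambda_{j,m}|^p\le 2^{\nd(j-\nu)}\sup_{m}|\lambda_{j,m}|^p$. Inserting this bound into $A_j(\lambda)$ cancels the factor $2^{(\nu-j)\frac{\nd}{p}}$ against $2^{(j-\nu)\frac{\nd}{p}}$ and leaves $\varphi(2^{-\nu})\sup_{m}|\lambda_{j,m}|\le\bigl(\sup_{t>0}\varphi(t)\bigr)\sup_{m}|\lambda_{j,m}|$ for all admissible $\nu,k$, hence $A_j(\lambda)\le\bigl(\sup_{t>0}\varphi(t)\bigr)\sup_{m}|\lambda_{j,m}|$; summing as before gives $\|\lambda\mid\n\|\lesssim\|\lambda\mid b^s_{\infty,q}\|$, i.e. $b^s_{\infty,q}\hookrightarrow\n$. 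The concluding ``in particular'' is then immediate from \textbf{(i)} and \textbf{(ii)}.

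I do not expect a genuine obstacle here: once Lemma~\ref{equiv-norm} is available, \textbf{(i)} and \textbf{(ii)} reduce to the two elementary one-line estimates above. The only points requiring a little attention are fixing the right normalisation in the definition of $b^s_{\infty,q}$ (the factor $2^{-j\nd/p}$ present in $b^s_{p,q}$ tends to $1$ and the $\ell_p$-norm of $\{\lambda_{j,m}\}_m$ passes to its $\ell_\infty$-norm as $p\to\infty$) and carrying the $q=\infty$ modification through the sums; alternatively, \textbf{(i)} and \textbf{(ii)} can also be obtained from Corollary~\ref{mainb}\,\textbf{(i)} applied with $p_1=p_2=p$, $s_1=s_2=s$, $q_1=q_2=q$ and $\{\varphi_1,\varphi_2\}=\{\varphi,\varphi_0\}$, $\varphi_0\equiv1$, so that $\varrho=1$ and \eqref{cond0} holds automatically.
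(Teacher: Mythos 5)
Your proof is correct and follows exactly the route the paper intends: the paper states this corollary as an immediate observation from Lemma~\ref{equiv-norm} without writing out the details, and your two estimates (restricting the supremum to $\nu=j$ for the lower bound, and using the count $2^{\nd(j-\nu)}$ of dyadic subcubes to cancel the factor $2^{(\nu-j)\nd/p}$ for the upper bound) are precisely the missing details. Your remark on the normalisation of $b^s_{\infty,q}$ and the identification with $n^s_{\varphi_0,p,q}$, $\varphi_0\equiv 1$, via \eqref{M1p} is also consistent with the paper's conventions.
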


\begin{remark}
This can be seen as some sequence space counterpart of Remark~\ref{rmk1}.
\end{remark}

  \begin{corollary}\label{diff_spaces}
    Let $s_i\in\rr$, $0<p_i<\infty$, $0<q_i\leq \infty$, and $\varphi_i\in {\mathcal G}_{p_i}$, for $i=1,2$.
Then 
\begin{equation} \label{nn-1}
\na = \nb \qquad \text{(in the sense of equivalent norms)}
\end{equation}
 if and only if 
 \begin{equation}\label{nn-2'}
s_1=s_2 \qquad \text{and}\qquad q_1=q_2, 
 \end{equation}
 and one of the two conditions
\begin{equation}\label{extremal}
  0<\inf\limits_{t>0}\ \varphi_i(t)\leq \sup\limits_{t>0}\ \varphi_i(t)<\infty,\quad i=1,2,
  \end{equation}
  or
 \begin{equation}\label{nn-2}
 p_1=p_2\qquad \text{and}\qquad \varphi_1(t)\sim \varphi_2(t),\quad t>0,
 \end{equation}
holds.  
  \end{corollary}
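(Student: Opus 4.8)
\emph{Sufficiency.} If \eqref{extremal} holds, then Corollary~\ref{cor-infty} gives $\na=b^{s_1}_{\infty,q_1}$ and $\nb=b^{s_2}_{\infty,q_2}$, which coincide since $s_1=s_2$ and $q_1=q_2$. If \eqref{nn-2} holds, then the quasi-norm in Definition~\ref{def-seq-spaces} depends on $\varphi_i$ only through the values $\varphi_i(\ell(Q))$, so $p_1=p_2$, $s_1=s_2$, $q_1=q_2$ and $\varphi_1\sim\varphi_2$ immediately yield $\|\cdot\mid\na\|\sim\|\cdot\mid\nb\|$. Hence $\na=\nb$ in both cases, which settles the ``if'' part.

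\emph{Necessity.} The plan is to apply Theorem~\ref{main} to the two embeddings $\na\hookrightarrow\nb$ and $\nb\hookrightarrow\na$ coming from $\na=\nb$. We may normalise $\varphi_i(1)=1$: replacing $\varphi_i$ by a positive multiple only rescales the quasi-norm of $\n$, and a nontrivial $\varphi_i\in\mathcal{G}_{p_i}$ always satisfies $\varphi_i(1)>0$ (otherwise monotonicity of $\varphi_i$ and of $\varphi_i(t)t^{-\nd/p_i}$ would force $\varphi_i\equiv0$). Put $\varrho_{12}=\min(1,p_1/p_2)$, $\varrho_{21}=\min(1,p_2/p_1)$, and let $\alpha_j^{(12)},\alpha_j^{(21)}$ and $q^*_{12},q^*_{21}$ be the quantities of Theorem~\ref{main} for the two embeddings. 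Since $\varphi_1(2^{-j})\le\varphi_1(1)=1$ for $j\ge0$ we have $\varphi_1(2^{-j})^{\varrho_{12}-1}\ge1$, and $\alpha_j^{(12)}\ge\alpha_0^{(12)}\ge\varphi_2(1)/\varphi_1(1)^{\varrho_{12}}=1$; hence the $j$-th term of the sequence in \eqref{cond2} for $\na\hookrightarrow\nb$ is at least $2^{j(s_2-s_1)}$. Its membership in $\ell_{q^*_{12}}$ therefore forces $s_1\ge s_2$, with $s_1>s_2$ if $q^*_{12}<\infty$. The symmetric argument gives $s_2\ge s_1$, with $s_2>s_1$ if $q^*_{21}<\infty$. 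Consequently $s_1=s_2$, which then excludes $q^*_{12}<\infty$ and $q^*_{21}<\infty$, i.e. $q_1=q_2$.

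With $s:=s_1=s_2$ and $q:=q_1=q_2$ we now have $q^*_{12}=q^*_{21}=\infty$, and by Theorem~\ref{main} the two embeddings are equivalent to $\sup_j\alpha_j^{(12)}\varphi_1(2^{-j})^{\varrho_{12}-1}<\infty$ and $\sup_j\alpha_j^{(21)}\varphi_2(2^{-j})^{\varrho_{21}-1}<\infty$ (each of these subsumes the corresponding condition \eqref{cond0} because the factor $\varphi_i(2^{-j})^{\varrho-1}$ is $\ge1$). If $p_1=p_2$, then $\varrho_{12}=\varrho_{21}=1$ and these say exactly $\varphi_2(2^{-\nu})\lesssim\varphi_1(2^{-\nu})$ and $\varphi_1(2^{-\nu})\lesssim\varphi_2(2^{-\nu})$ for all $\nu\in\zz$; since each $\varphi_i\in\mathcal{G}_{p_i}$ is, up to the constant $2^{\nd/p_i}$, constant on each interval $[2^{-\nu-1},2^{-\nu}]$, this upgrades to $\varphi_1(t)\sim\varphi_2(t)$ for all $t>0$, which is \eqref{nn-2}. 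If $p_1\ne p_2$, say $p_1<p_2$ (the case $p_1>p_2$ being symmetric), then $\varrho_{12}=p_1/p_2<1$ and $\varrho_{21}=1$: boundedness of $\alpha_j^{(12)}\varphi_1(2^{-j})^{\varrho_{12}-1}$ together with $\alpha_j^{(12)}\ge1$ gives $\varphi_1(2^{-j})^{\varrho_{12}-1}\lesssim1$, hence $\inf_{t>0}\varphi_1(t)>0$; combining $\varphi_1(2^{-\nu})\lesssim\varphi_2(2^{-\nu})$ (from $\nb\hookrightarrow\na$) with $\varphi_2(2^{-\nu})\lesssim\varphi_1(2^{-\nu})^{\varrho_{12}}$ (from $\na\hookrightarrow\nb$) gives $\varphi_1(2^{-\nu})^{1-\varrho_{12}}\lesssim1$, so $\varphi_1$ is also bounded above; the same two inequalities then bound $\varphi_2$ above and below as well, and passing to arbitrary $t>0$ as in the previous case we arrive at \eqref{extremal}.

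The only step requiring genuine care is the extraction of $s_1=s_2$ and $q_1=q_2$, where the sign of $s_2-s_1$ dictated by $\ell_{q^*}$-membership must be matched against the two cases $q^*<\infty$ and $q^*=\infty$; everything else is routine manipulation of the defining inequalities of $\mathcal{G}_{p_i}$.
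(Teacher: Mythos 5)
Your proof is correct, and while it rests on the same two pillars as the paper's (sufficiency via Corollary~\ref{cor-infty} and direct comparison of the quasi-norms; necessity via applying Theorem~\ref{main} to both embeddings $\na\hookrightarrow\nb$ and $\nb\hookrightarrow\na$), the organisation of the necessity part is genuinely different and, in my view, cleaner. You first extract $s_1=s_2$ and $q_1=q_2$ uniformly, by observing that the normalisation $\varphi_i(1)=1$ forces $\alpha_j\geq 1$ and $\varphi_1(2^{-j})^{\varrho-1}\geq 1$, so the sequence in \eqref{cond2} dominates $2^{j(s_2-s_1)}$ termwise; only afterwards do you split on $p_1=p_2$ versus $p_1\neq p_2$ to recover \eqref{nn-2} or \eqref{extremal}. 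The paper instead deduces from \eqref{cond0} in both directions the dichotomy ``$p_1=p_2$ and $\varphi_1\sim\varphi_2$ for $t\geq 1$'' versus ``$\sup_t\varphi_i(t)<\infty$'', and in the second case obtains \eqref{nn-2'} only indirectly, by first proving \eqref{extremal} through a contradiction argument (of which only the subcase $\varphi_2(t)\to0$ is written out) and then invoking the classical fact that $b^{s_1}_{\infty,q_1}=b^{s_2}_{\infty,q_2}$ forces $s_1=s_2$, $q_1=q_2$. Your route avoids both the appeal to that external fact and the contradiction argument: in the case $p_1<p_2$ you derive $\inf_{t>0}\varphi_1(t)>0$ directly from \eqref{cond2} (using $\alpha_j\geq1$) and then sandwich $\varphi_1(2^{-\nu})\lesssim\varphi_2(2^{-\nu})\lesssim\varphi_1(2^{-\nu})^{p_1/p_2}$ to bound everything above and below, which yields \eqref{extremal} constructively and covers all subcases at once. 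The price is negligible; what you gain is a self-contained argument whose only input is Theorem~\ref{main}.
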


  \begin{proof}
    {\em Step 1}.~ Clearly \eqref{nn-2'} and \eqref{nn-2} imply \eqref{nn-1}, but also \eqref{nn-2'} and \eqref{extremal} lead to \eqref{nn-1} which can be seen as follows: either one checks directly the conditions \eqref{cond0} and \eqref{cond2}, or one uses Corollary~\ref{cor-infty} and observes that \eqref{extremal} leads to $\na=b^{s_1}_{\infty,q_1}$ and $\nb=b^{s_2}_{\infty,q_2}$. Hence \eqref{nn-2'} completes the proof of the sufficiency for \eqref{nn-1}.

    {\em Step 2}.~ Now we deal with the necessity. Here we apply Theorem~\ref{main} twice, that is, for $\na\hookrightarrow \nb$ and $\nb\hookrightarrow\na$. Thus \eqref{cond0} in both cases leads to
    \[
    \varphi_2(2^{-\nu})\leq c \varphi_1(2^{-\nu})^{\min(1,\frac{p_1}{p_2})} \leq c' \varphi_2(2^{-\nu})^{\min(\frac{p_1}{p_2},\frac{p_2}{p_1})},\quad \nu\leq 0,
    \]
    such that $\varphi_2(2^{-\nu})^{1-\min(\frac{p_1}{p_2},\frac{p_2}{p_1})} \leq c'' $, $\nu\leq 0$. This requires either $p_1=p_2$ and thus $\varphi_1(t)\sim \varphi_2(t)$, $t\geq 1$, or $\sup_{t>0}\varphi_i(t)<\infty$, $i=1,2$.
      
    If $p_1=p_2$ and $\varphi_1(t) \sim \varphi_2(t)$, $t\geq 1$, then
    $\alpha_j \sim \max_{\nu=0, \dots, j} \frac{\varphi_2(2^{-\nu})}{\varphi_1(2^{-\nu})}$, likewise  
    $\widetilde{\alpha}_j\sim \max_{\nu=0, \dots, j} \frac{\varphi_1(2^{-\nu})}{\varphi_2(2^{-\nu})}$. Thus \eqref{cond2} leads, in particular, to
    \[
  \max_{\nu=0, \dots, j} \frac{\varphi_2(2^{-\nu})}{\varphi_1(2^{-\nu})} \leq c\ 2^{j(s_1-s_2)} \leq  \min_{\nu=0, \dots, j} \frac{\varphi_2(2^{-\nu})}{\varphi_1(2^{-\nu})} 
    \]
for all $j\in\no$, i.e., $\varphi_1(t)\sim \varphi_2(t)$, $0<t\leq 1$, $s_1=s_2$, which finally implies $q_1=q_2$, as desired.
     
Assume now $\sup_{t>0}\varphi_i(t)<\infty$, where we may restrict ourselves to the case $p_1\neq p_2$. It is sufficient to show that there appears a contradiction if \eqref{extremal} is not satisfied, as then -- again in view of Corollary~\ref{cor-infty} -- \eqref{extremal} and \eqref{nn-1} yield $b^{s_1}_{\infty,q_1}=
b^{s_2}_{\infty,q_2}$ which is known to imply \eqref{nn-2'} finally.
So let us assume $p_1<p_2$, hence $\min(1,\frac{p_1}{p_2})=\frac{p_1}{p_2}$, $\min(1, \frac{p_2}{p_1})=1$.
Let $\varepsilon>0$, then there exists some $j_0=j_0(\varepsilon)\in\nat$ such that $\varphi_1(2^{-j})^{\frac{p_1}{p_2}-1} \geq (\inf_{t>0} \varphi_1(t)+\varepsilon)^{\frac{p_1}{p_2}-1} \geq c>0$ for $ j\geq j_0$. If \eqref{extremal} is not satisfied, then at least one of the sequences $(\alpha_j)_j$ or $(\widetilde{\alpha}_j)_j$ diverges,
\[\alpha_j = \sup_{\nu\leq j} \frac{\varphi_2(2^{-\nu})}{\varphi_1(2^{-\nu})^{\frac{p_1}{p_2}}} \xrightarrow[j\to\infty]{} \infty\qquad\text{or}\qquad 
\widetilde{\alpha}_j= \sup_{\nu\leq j} \frac{\varphi_1(2^{-\nu})}{\varphi_2(2^{-\nu})}\xrightarrow[j\to\infty]{} \infty.
\]
Let us assume that $\varphi_2(t)\rightarrow 0$ for $t\to 0$. Then $\alpha_j \geq 1$, $j\in\no$, and $\widetilde{\alpha}_j\rightarrow \infty$ for $j\to\infty$. Consequently \eqref{cond2} applied to $\na\hookrightarrow \nb$ leads to $s_1\geq s_2$, but the second embedding requires $s_2>s_1$. This is a contradiction.
  \end{proof}

Next we study the special situation when $\varphi_1=\varphi_2=\varphi$.

    \begin{corollary}\label{same_phi}
      Let $s_i\in\rr$, $0<p_i<\infty$, $0<q_i\leq \infty$ for $i=1,2$, $\varphi\in {\mathcal G}_{\max(p_1,p_2)}$, and $\frac{1}{q^*}=(\frac{1}{q_2}-\frac{1}{q_1})_+$.
      \bit
      \item[{\upshape\bfseries (i)}] Let $p_1\geq p_2$. Then 
\begin{equation} 
\nfa \hookrightarrow \nfb 
\end{equation}
 if and only if 
 \begin{equation}\label{nn-3}
   \begin{cases}
     s_1>s_2, & \text{if}\ q_1>q_2, \\ 
     s_1\geq s_2, & \text{if}\ q_1\leq q_2. \end{cases}
 \end{equation}
      \item[{\upshape\bfseries (ii)}]
Let $p_1<p_2$. Then 
\begin{equation} 
\nfa \hookrightarrow \nfb 
\end{equation}
 if and only if 
 \begin{equation}\label{nn-4}
\sup_{t>0}\ \varphi(t)<\infty\qquad \text{and}\quad 
\left\{2^{j(s_2-s_1)} \varphi(2^{-j})^{\frac{p_1}{p_2}-1}\right\}_j \in \ell_{q^*}\ .
 \end{equation}
\eit
    \end{corollary}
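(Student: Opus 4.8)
The plan is to deduce the corollary directly from Theorem~\ref{main}, specialised to $\varphi_1=\varphi_2=\varphi$. Since $p_i\le\max(p_1,p_2)$ for $i=1,2$, the inclusion $\mathcal{G}_{\max(p_1,p_2)}\subset\mathcal{G}_{p_i}$ gives $\varphi\in\mathcal{G}_{p_1}\cap\mathcal{G}_{p_2}$, so Theorem~\ref{main} applies; as there, we may assume $\varphi(1)=1$, and then $\varphi(t)>0$ for every $t>0$. It thus remains to evaluate, in each of the two regimes, the quantities $\varrho=\min(1,\frac{p_1}{p_2})$ and $\alpha_j=\sup_{\nu\le j}\varphi(2^{-\nu})^{1-\varrho}$, together with conditions \eqref{cond0} and \eqref{cond2}.

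For part~(i), where $p_1\ge p_2$, one has $\varrho=1$, hence $\alpha_j\equiv1$ and $\frac{\varphi(2^{-j})^{\varrho}}{\varphi(2^{-j})}=1$. Therefore \eqref{cond0} holds trivially and \eqref{cond2} reduces to $\{2^{j(s_2-s_1)}\}_j\in\ell_{q^*}$. If $q_1\le q_2$, then $q^*=\infty$ and this amounts to boundedness, i.e.\ $s_1\ge s_2$; if $q_1>q_2$, then $q^*<\infty$ and it amounts to summability, i.e.\ $s_1>s_2$. This is exactly \eqref{nn-3}.

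For part~(ii), where $p_1<p_2$, one has $\varrho=\frac{p_1}{p_2}<1$ and $\frac{\varphi(2^{-j})^{\varrho}}{\varphi(2^{-j})}=\varphi(2^{-j})^{\frac{p_1}{p_2}-1}$. The key observation is that, since $\varphi$ is nondecreasing, for every $j\in\no$ the set $\{2^{-\nu}:\nu\le j\}$ is unbounded above, whence $\sup_{\nu\le j}\varphi(2^{-\nu})=\sup_{t>0}\varphi(t)$ for all $j$. Consequently \eqref{cond0}, which requires $\sup_{\nu\le0}\varphi(2^{-\nu})^{1-\varrho}<\infty$, is equivalent to $\sup_{t>0}\varphi(t)<\infty$; and once this holds, $\alpha_j$ equals the finite constant $\alpha:=(\sup_{t>0}\varphi(t))^{1-\varrho}$, independent of $j$ (and $\ge1$ by the normalisation). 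Hence \eqref{cond2} becomes $\{2^{j(s_2-s_1)}\varphi(2^{-j})^{\frac{p_1}{p_2}-1}\}_j\in\ell_{q^*}$, and together with $\sup_{t>0}\varphi(t)<\infty$ this is precisely \eqref{nn-4}.

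The argument is essentially bookkeeping on top of Theorem~\ref{main}; the only point requiring a moment's care is the identity $\sup_{\nu\le j}\varphi(2^{-\nu})=\sup_{t>0}\varphi(t)$, coming from the monotonicity of $\varphi$, which is what collapses the sequence $(\alpha_j)_j$ to a constant and reduces \eqref{cond2} to the clean form \eqref{nn-4}. Note also that in part~(ii) the hypothesis $\sup_{t>0}\varphi(t)<\infty$ does not force $\varphi$ to be comparable to a constant (for instance $\varphi$ may tend to $0$ at the origin), so one cannot simply invoke Corollary~\ref{cor-infty}; the full strength of Theorem~\ref{main} is genuinely used.
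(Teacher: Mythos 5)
Your proposal is correct and follows essentially the same route as the paper: specialise Theorem~\ref{main} to $\varphi_1=\varphi_2=\varphi$, note $\varrho=1$, $\alpha_j\equiv 1$ in case (i), and in case (ii) use the monotonicity of $\varphi$ to see that \eqref{cond0} is equivalent to $\sup_{t>0}\varphi(t)<\infty$ and that $\alpha_j$ is a bounded constant, so that \eqref{cond2} collapses to the second condition in \eqref{nn-4}. Your explicit identification $\alpha_j=(\sup_{t>0}\varphi(t))^{1-\varrho}$ is a slightly sharper form of the paper's observation that $\alpha_j\sim 1$, but the argument is the same.
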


  \begin{proof}
    If $p_1\geq p_2$, then (using the notation of Theorem~\ref{main})  $\varrho=1$ and $\alpha_j\equiv 1$, and \eqref{cond0} is automatically satisfied. Moreover, \eqref{cond2} reduces to the question whether $\{2^{j(s_2-s_1)}\}_j\in\ell_{q^*}$, i.e., \eqref{nn-3}, which completes the proof of (i).

In case of $p_1<p_2$, $\varrho=\frac{p_1}{p_2}<1$ and \eqref{cond0} is obviously equivalent to the first condition in \eqref{nn-4}, that is, $  \sup_{t>0}\ \varphi(t)<\infty$. Moreover, in that case $\alpha_j \sim 1$, such that \eqref{cond2} can be rewritten as the second part of \eqref{nn-4}. 
  \end{proof}

  \begin{remark}
    Note that a sufficient condition for $\varphi$ in (ii) is -- in addition to $\sup_{t>0}\ \varphi(t)<\infty$ -- that
    \[
    \frac{s_1-s_2}{\nd}>\frac{p_1}{p_2}\left(\frac{1}{p_1}-\frac{1}{p_2}\right)
    \]
using the properties of $\varphi\in \mathcal{G}_{\max(p_1,p_2)}=\mathcal{G}_{p_2}$.
  \end{remark}

\begin{example}\label{ex_same_phi}
  We explicate Corollary~\ref{same_phi} for some function $\varphi$. We restrict ourselves to the situation $p_1<p_2$, i.e.,  Corollary~\ref{same_phi}(ii).
{ Let}
  \[
  \varphi(t)=\begin{cases} t^{\frac{\nd}{p_2}} \left(1+|\log t|\right)^a, & 0<t<1, \\ 1, & t\geq 1,\end{cases}
  \]
  where $a\in\rr$. Since $\sup_{t>0} \varphi(t)<\infty$, we deal with the second condition in \eqref{nn-4},  which leads to
 \[ \begin{cases}
    \frac{s_1-s_2}{\nd} > \frac{p_1}{p_2}\left(\frac{1}{p_1}-\frac{1}{p_2}\right), & \text{or}, \smallskip\\
    \frac{s_1-s_2}{\nd} = \frac{p_1}{p_2}\left(\frac{1}{p_1}-\frac{1}{p_2}\right)\quad\text{and}\quad a\geq 0, & \text{if}\quad q_1\leq q_2,\quad\text{or},
\smallskip\\
    \frac{s_1-s_2}{\nd} = \frac{p_1}{p_2}\left(\frac{1}{p_1}-\frac{1}{p_2}\right)\quad\text{and}\quad a> \frac{1}{q^*} \frac{p_2}{p_2-p_1}, & \text{if}\quad q_1> q_2. \end{cases}
  \]
We  {used } that in this case
$$ 
   \left\{2^{j(s_2-s_1)} \varphi(2^{-j})^{\frac{p_1}{p_2}-1}\right\}_j = 
   \left\{2^{j(s_2-s_1-\nd \frac{p_1}{p_2}(\frac{1}{p_1}-\frac{1}{p_2}))} (1+j)^{a(\frac{p_1}{p_2}-1) }\right\}_j \ .
$$ 
  \end{example}

  Now we focus on embeddings where either the target or the source space is a Besov sequence space, {for what we recall that} 
  \[
  b^s_{p,q} = n^s_{p,p,q}, \quad 0<p<\infty, \quad 0<q\leq \infty, \quad s\in\rr.
  \]

  \begin{corollary}\label{gen_morrey_besov}
    Let $s_i\in\rr$, $0<p_i<\infty$, $0<q_i\leq \infty$ for $i=1,2$, and $\varphi_1\in {\mathcal G}_{p_1}$. Denote again
     $\frac{1}{q^*}=(\frac{1}{q_2}-\frac{1}{q_1})_+$. Then 
        \begin{equation}\label{nn-5-0}
    \na \hookrightarrow b^{s_2}_{p_2,q_2}
   \end{equation}
        if and only if
        \begin{equation}\label{nn-5}
          p_1\leq p_2,\qquad \varphi_1 (t) \sim t^{\frac{\nd}{p_1}}, \ t\geq 1,\qquad \text{and}\quad\left\{2^{j(s_2-s_1) }\varphi_1(2^{-j})^{\frac{p_1}{p_2}-1}\right\}_j \in \ell_{q^*}.
   \end{equation}
  \end{corollary}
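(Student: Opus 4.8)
The plan is to derive the statement from Theorem~\ref{main} by specialising $\varphi_2$. Recall from Remark~\ref{seq-besov-rem} that $b^{s_2}_{p_2,q_2}=n^{s_2}_{p_2,p_2,q_2}$, which corresponds to the weight $\varphi_2(t)=t^{\nd/p_2}$; by Examples~\ref{exm-Gp}(i) (with $u=v=p_2$) one has $\varphi_2\in\mathcal{G}_{p_2}$, and clearly $\varphi_2(1)=1$. After replacing $\varphi_1$ by $\varphi_1/\varphi_1(1)$ -- which leaves $\na$ unchanged up to equivalence of norms and does not affect membership in $\mathcal{G}_{p_1}$ -- we may assume $\varphi_1(1)=1$, so that Theorem~\ref{main} applies verbatim with $\varrho=\min(1,\tfrac{p_1}{p_2})$ and $\alpha_j=\sup_{\nu\le j}\varphi_2(2^{-\nu})\varphi_1(2^{-\nu})^{-\varrho}$. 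It then remains to translate conditions \eqref{cond0} and \eqref{cond2} into \eqref{nn-5}.

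For \eqref{cond0}: since $2^{-\nu}\ge 1$ for $\nu\le 0$ and $\varphi_2(2^{-\nu})=2^{-\nu\nd/p_2}$, condition \eqref{cond0} says exactly that $t^{\nd/p_2}\ls\varphi_1(t)^{\varrho}$ for $t\in\{2^k:k\in\no\}$, hence -- using that $\varphi_1$ is nondecreasing -- for all $t\ge 1$. If $p_1>p_2$, then $\varrho=1$ and $\varphi_1\in\mathcal{G}_{p_1}$ forces $\varphi_1(t)\le\varphi_1(1)t^{\nd/p_1}=t^{\nd/p_1}$ for $t\ge 1$; since $\nd/p_1<\nd/p_2$ this contradicts $t^{\nd/p_2}\ls\varphi_1(t)$ as $t\to\infty$. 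Hence $p_1\le p_2$ is necessary, and then $\varrho=\tfrac{p_1}{p_2}$, so \eqref{cond0} becomes $t^{\nd/p_1}\ls\varphi_1(t)$, $t\ge 1$. Combining this with the upper bound $\varphi_1(t)\le t^{\nd/p_1}$, $t\ge 1$ (again from $\varphi_1\in\mathcal{G}_{p_1}$ and $\varphi_1(1)=1$), we obtain $\varphi_1(t)\sim t^{\nd/p_1}$ for $t\ge 1$, which are the first two conditions in \eqref{nn-5}.

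Assuming now $p_1\le p_2$ and $\varphi_1(t)\sim t^{\nd/p_1}$ for $t\ge 1$, it remains to show that $\alpha_j\sim 1$, so that the sequence in \eqref{cond2},
\[
\Bigl\{2^{j(s_2-s_1)}\,\alpha_j\,\varphi_1(2^{-j})^{\frac{p_1}{p_2}-1}\Bigr\}_j,
\]
is equivalent to $\bigl\{2^{j(s_2-s_1)}\varphi_1(2^{-j})^{\frac{p_1}{p_2}-1}\bigr\}_j$, which then yields exactly the third condition in \eqref{nn-5}. The lower bound $\alpha_j\ge 1$ follows by taking $\nu=0$ in the defining supremum, since $\varphi_2(1)/\varphi_1(1)^{\varrho}=1$. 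For the upper bound one splits the supremum over $\nu\le j$ into the ranges $\nu\le 0$ and $0<\nu\le j$: in the first range $\varphi_2(2^{-\nu})\varphi_1(2^{-\nu})^{-p_1/p_2}\sim 1$ by $\varphi_1(t)\sim t^{\nd/p_1}$, $t\ge 1$; in the second range $2^{-\nu}<1$ and $\varphi_1\in\mathcal{G}_{p_1}$ with $\varphi_1(1)=1$ gives $\varphi_1(2^{-\nu})\ge 2^{-\nu\nd/p_1}$, hence $\varphi_2(2^{-\nu})\varphi_1(2^{-\nu})^{-p_1/p_2}=2^{-\nu\nd/p_2}\varphi_1(2^{-\nu})^{-p_1/p_2}\le 1$. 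Thus $\alpha_j\le C$ uniformly in $j$, so $\alpha_j\sim 1$, and the proof is complete.

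The only slightly delicate points are the passage from the dyadic inequality in \eqref{cond0} to the estimate $\varphi_1(t)\sim t^{\nd/p_1}$ for all $t\ge 1$ (handled by the monotonicity of $\varphi_1$), and keeping track of which direction of the $\mathcal{G}_{p_1}$-inequalities is used in each range of $\nu$ when bounding $\alpha_j$; both are routine given the structure of $\mathcal{G}_{p_1}$.
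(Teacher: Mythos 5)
Your proposal is correct and follows essentially the same route as the paper: apply Theorem~\ref{main} with $\varphi_2(t)=t^{\nd/p_2}$, deduce $p_1\leq p_2$ and $\varphi_1(t)\sim t^{\nd/p_1}$ for $t\geq 1$ from \eqref{cond0} together with the upper bound $\varphi_1(t)\leq t^{\nd/p_1}$ coming from $\varphi_1\in\mathcal{G}_{p_1}$, and then show $\alpha_j\sim 1$ so that \eqref{cond2} reduces to the third condition in \eqref{nn-5}. Your explicit split of the supremum defining $\alpha_j$ into the ranges $\nu\leq 0$ and $0<\nu\leq j$ is exactly the argument the paper compresses into one displayed chain of inequalities.
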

  
  \begin{proof}
    We apply Theorem~\ref{main} (and its notation) with $\varphi_2(t)=t^{\frac{\nd}{p_2}}$, $t>0$. Thus \eqref{cond0} is equivalent to
    \[\varphi_1(t)\geq \ c\ t^{\frac{\nd}{p_2}\frac{1}{\varrho}},\quad t\geq 1.\]
    On the other hand, $\varphi_1\in\mathcal{G}_{p_1}$ implies $\varphi_1(t)\leq t^{\frac{\nd}{p_1}}$, $t\geq 1$, hence this results in $p_1\leq p_2$ and $\varphi_1(t)\sim t^{\frac{\nd}{p_1}}$, $t\geq 1$, which is the first part of \eqref{nn-5}. We concentrate on \eqref{cond2} and observe that $\alpha_j$ is bounded, since
          \[1\leq \alpha_j=\sup_{\nu\leq j} \varphi_1(2^{-\nu})^{-\frac{p_1}{p_2}} 2^{-\nu\frac{\nd}{p_2}} \sim \max_{\nu=0, \dots, j} \varphi_1(2^{-\nu})^{-\frac{p_1}{p_2}} 2^{-\nu\frac{\nd}{p_2}} \leq \max_{\nu=0, \dots, j}  2^{\nu\frac{\nd}{p_2}} 2^{-\nu\frac{\nd}{p_2}} = 1,
\]
        where we used again $\varphi_1\in\mathcal{G}_{p_1}$, this time leading to $\varphi_1(2^{-\nu})\geq 2^{-\nu\frac{\nd}{p_1}}$, $\nu\in\no$. Thus \eqref{cond2} corresponds to the second part in \eqref{nn-5}.
  \end{proof}

\begin{example} 
{We illustrate Corollary \ref{gen_morrey_besov} for $\varphi_1$  given by
$$
\varphi_{1}(t)=
\begin{cases}
t^{\nd/u} & \text{if}\qquad t\le 1,\\ 
t^{\nd/p_1} & \text{if}\qquad t >1, 
\end{cases}
$$
with $p_1\leq u <\infty$,  a special case of  \eqref{example1}.  It turns out that in such a case \eqref{nn-5-0} holds if and only if $p_1\leq p_2$ and 
 \begin{equation*}
   \begin{cases}
     \frac{s_1-s_2}{\nd} >  \frac{p_1}{u}\left(\frac{1}{p_1}-\frac{1}{p_2}\right) 
     , & \text{if}\quad q_1>q_2, \quad \text{or},\smallskip \\
     \frac{s_1-s_2}{\nd} \geq  \frac{p_1}{u}\left(\frac{1}{p_1}-\frac{1}{p_2}\right)
     , & \text{if}\quad q_1\leq q_2. \end{cases}
  \end{equation*} 
When $u=p_1$ this is the sequence space  counterpart of  \cite[Cor.~3.7]{hs12}.} 
 \end{example}

    \begin{corollary}\label{besov_gen_morrey}
      Let $s_i\in\rr$, $0<p_i<\infty$, $0<q_i\leq \infty$ for $i=1,2$, and $\varphi_2\in {\mathcal G}_{p_2}$. Denote again $\frac{1}{q^*}=(\frac{1}{q_2}-\frac{1}{q_1})_+$.
      \bit
 \item[{\upshape\bfseries (i)}]
Let $p_1\leq p_2$. Then
    \[
b^{s_1}_{p_1,q_1}  \hookrightarrow  \nb
\]
if and only if  
        \begin{equation}\label{nn-6}
\{2^{j(s_2-s_1+\frac{\nd}{p_1})}\varphi_2(2^{-j})\}\in\ell_{q^*} . 
        \end{equation}
 \item[{\upshape\bfseries (ii)}]
Let $p_1>p_2$. Then 
    \[
b^{s_1}_{p_1,q_1}  \hookrightarrow  \nb
\]
if and only if 
\begin{align}\label{cond0'}
\sup_{t\geq 1} t^{-\frac{\nd}{p_1}} \varphi_2(t) & < \infty , 
\intertext{and} 
\label{cond2'}
\left\{   2^{j(s_2-s_1)} \sup_{0\leq \nu\leq j} 2^{\nu \frac{\nd}{p_1}} \varphi_2(2^{-\nu})
\right\}_j & \in \ell_{q^*}.
\end{align}
\eit
  \end{corollary}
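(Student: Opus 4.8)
The plan is to deduce both parts from Theorem~\ref{main} applied with the choice $\varphi_1(t)=t^{\nd/p_1}$, $t>0$, for which $\na=n^{s_1}_{p_1,p_1,q_1}=b^{s_1}_{p_1,q_1}$ (cf.\ Remark~\ref{seq-besov-rem}). This $\varphi_1$ satisfies $\varphi_1(1)=1$ and belongs to $\mathcal{G}_{p_1}$, so after the usual normalisation $\varphi_2(1)=1$ the hypotheses of Theorem~\ref{main} hold, and the entire task reduces to evaluating, for this special $\varphi_1$, the exponent $\varrho=\min(1,p_1/p_2)$, the sequence $\alpha_j=\sup_{\nu\le j}\varphi_2(2^{-\nu})\varphi_1(2^{-\nu})^{-\varrho}$, and the weight $\varphi_1(2^{-j})^\varrho/\varphi_1(2^{-j})$ occurring in \eqref{cond2}, and then rewriting conditions \eqref{cond0} and \eqref{cond2} accordingly.

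For part~(i) one has $p_1\le p_2$, hence $\varrho=p_1/p_2$, so $\varphi_1(2^{-\nu})^\varrho=2^{-\nu\nd/p_2}$ and the weight equals $2^{j\nd(1/p_1-1/p_2)}$. Since $\varphi_2\in\mathcal{G}_{p_2}$ the map $t\mapsto\varphi_2(t)t^{-\nd/p_2}$ is nonincreasing; evaluating it at $t=2^{-\nu}\ge1$ gives $2^{\nu\nd/p_2}\varphi_2(2^{-\nu})\le1$ for $\nu\le0$, so \eqref{cond0} is automatic, and evaluating it along $1\ge2^{-\nu}\ge2^{-j}$ shows $\nu\mapsto2^{\nu\nd/p_2}\varphi_2(2^{-\nu})$ is nondecreasing on $0\le\nu\le j$, whence $\alpha_j=2^{j\nd/p_2}\varphi_2(2^{-j})$. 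Substituting into \eqref{cond2}, the powers of $2$ combine to $2^{j\nd/p_1}$ and one obtains exactly \eqref{nn-6}; the equivalence of Theorem~\ref{main} then yields the claim.

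For part~(ii), $p_1>p_2$ forces $\varrho=1$, so $\varphi_1(2^{-\nu})^\varrho=\varphi_1(2^{-\nu})$, the weight in \eqref{cond2} is $1$, and $\alpha_j=\sup_{\nu\le j}2^{\nu\nd/p_1}\varphi_2(2^{-\nu})$. Writing $t=2^{-\nu}$, condition \eqref{cond0} becomes $\sup_{t\ge1}t^{-\nd/p_1}\varphi_2(t)<\infty$, i.e.\ \eqref{cond0'}; here this is a genuine restriction, since for $p_2<p_1$ membership $\varphi_2\in\mathcal{G}_{p_2}$ only bounds $\varphi_2(t)$ by $t^{\nd/p_2}$ when $t\ge1$. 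Granting \eqref{cond0'}, the part of the supremum defining $\alpha_j$ coming from scales $\nu\le0$ is a constant lying in $[1,C]$ and is therefore absorbed, up to a fixed factor, by the part over $0\le\nu\le j$, which is already at least $1$; hence $\alpha_j\sim\sup_{0\le\nu\le j}2^{\nu\nd/p_1}\varphi_2(2^{-\nu})$, and \eqref{cond2} turns into \eqref{cond2'}, which completes the proof via Theorem~\ref{main}.

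The delicate point is exactly this last passage in part~(ii): unlike in part~(i), where $\mathcal{G}_{p_2}$ renders $\nu\mapsto2^{\nu\nd/p_2}\varphi_2(2^{-\nu})$ monotone for $\nu\ge0$, when $p_2<p_1$ the sequence $\nu\mapsto2^{\nu\nd/p_1}\varphi_2(2^{-\nu})$ lies between $2^{\nu\nd(1/p_1-1/p_2)}$ and $2^{\nu\nd/p_1}$, which move in opposite directions, so it need not be monotone and $\alpha_j$ cannot in general be replaced by its endpoint value $2^{j\nd/p_1}\varphi_2(2^{-j})$; the supremum must genuinely survive in \eqref{cond2'}. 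One must also check that \eqref{cond0'} is precisely what keeps the coarse scales $t\ge1$ from spoiling $\alpha_j$. Everything else is routine bookkeeping with the defining inequalities of $\mathcal{G}_{p_i}$ and the normalisation $\varphi_i(1)=1$.
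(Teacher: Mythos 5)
Your proposal is correct and follows essentially the same route as the paper: apply Theorem~\ref{main} with $\varphi_1(t)=t^{\nd/p_1}$ and evaluate $\varrho$, $\alpha_j$ and the weight in \eqref{cond2} using the defining monotonicity properties of $\mathcal{G}_{p_2}$. In fact you supply one detail the paper's proof leaves implicit in part (ii), namely that under \eqref{cond0'} the supremum over $\nu\le 0$ in $\alpha_j$ is pinched between $1$ and a fixed constant, so that $\alpha_j\sim\sup_{0\le\nu\le j}2^{\nu\nd/p_1}\varphi_2(2^{-\nu})$ and \eqref{cond2} may be rewritten as \eqref{cond2'}.
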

  
  \begin{proof}
    Part (ii) exactly corresponds to Theorem~\ref{main} with $\varphi_1(t)=t^{\frac{\nd}{p_1}}$, $t>0$, and $\varrho=1$. As for part (i), now with $\varrho=\frac{p_1}{p_2}$, \eqref{cond0} reads as
    \[
    \sup_{\nu\leq 0} 2^{\nu\frac{\nd}{p_2}} \varphi_2(2^{-\nu})\leq c,
    \]
    but this is always true in view of $\varphi_2\in\mathcal{G}_{p_2}$. By the same argument, 
\[ \alpha_j \sim \max_{\nu=0, \dots, j} 2^{\nu\frac{\nd}{p_2}} \varphi_2(2^{-\nu}) = \varphi_2(2^{-j}) 2^{j\frac{\nd}{p_2}},
 \]   
 which leads to
 \[ 2^{j(s_2-s_1)} \alpha_j \varphi_1(2^{-j})^{\varrho-1} = 2^{j(s_2-s_1+\frac{\nd}{p_2})} \varphi_2(2^{-j}) 2^{-j\frac{\nd}{p_1}(\frac{p_1}{p_2}-1)} =  
 2^{j(s_2-s_1+ \frac{\nd}{p_1})} \varphi_2(2^{-j}) ,\]
 such that \eqref{cond2} coincides with \eqref{nn-6}.
  \end{proof}

  \begin{example}
We consider a model function for part (ii), i.e., when $p_1>p_2$. Recall that in this case there is no continuous embedding for classical Besov-(Morrey) spaces on $\rd$. Let 
    \[
\varphi_2(t) = \begin{cases} t^{\frac{\nd}{u_1}}, & t\geq 1, \\ 
t^{\frac{\nd}{u_2}}, & 0<t\leq 1, \end{cases}
  \]
  where $u_1\geq p_1$ and $u_2\geq p_2$. We may even admit $u_1=\infty$ with the understanding that $\varphi_2(t)=1$ for $t\geq 1$. Then $\varphi_2\in\mathcal{G}_{p_2}$, \eqref{cond0'} is satisfied, and \eqref{cond2'} leads to
  \[
\begin{cases}  \frac{s_1-s_2}{\nd} \geq \left(\frac{1}{p_1}-\frac{1}{u_2}\right)_+ & \text{if}\quad q_1\leq q_2, \smallskip \\ 
\frac{s_1-s_2}{\nd} > \left(\frac{1}{p_1}-\frac{1}{u_2}\right)_+ & \text{if}\quad q_1> q_2.\end{cases}
  \]
In particular, $u_1=u_2=p_1$ is admitted, such that $\varphi_1(t)=\varphi_2(t) =t^{\frac{\nd}{p_1}}$ then and we recover our result from Corollary~\ref{same_phi}(i) for this case.
 \end{example}

\section{Embeddings of generalised Besov-Morrey function spaces}\label{emb-func}

Now we deal with embeddings of the function spaces. We   benefit from our sequence space result Theorem~\ref{main} and the wavelet characterisation of the function spaces, {cf.} Theorem \ref{waveletth}. The following statement is the immediate  consequence of the just mentioned theorems, recall also Remark~\ref{rem-n-tn}.

\begin{theorem}\label{fs}  
Let $s_i\in\rr$, $0<p_i<\infty$, $0<q_i\leq \infty$, and $\varphi_i\in {\mathcal G}_{p_i}$, for $i=1,2$. 
We assume without loss of generality that $\varphi_1(1)=\varphi_2(1)=1$. 

There is a continuous embedding 
\begin{equation} \label{embed1fs}
{\mathcal N}^{s_1}_{\varphi_1,p_1,q_1}(\rd) \hookrightarrow {\mathcal N}^{s_2}_{\varphi_2,p_2,q_2}(\rd)
\end{equation}
if and only if
\eqref{cond0} and \eqref{cond2} are satisfied, using the notation of Theorem~\ref{main}.
 
The embedding  \eqref{embed1fs} is never compact.
\end{theorem}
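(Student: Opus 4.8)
The plan is to reduce \eqref{embed1fs} completely to the sequence-space embedding \eqref{embed1}, which Theorem~\ref{main} already characterises, by transferring everything through the wavelet isomorphism of Theorem~\ref{waveletth}. First I fix a single Daubechies system whose parameter $L$ satisfies \eqref{L-cond-wave} simultaneously for $(s_1,p_1)$ and $(s_2,p_2)$, i.e. $L>\max\{\lfloor 1+s_i\rfloor_+,\ \nd/p_i-s_i:\ i=1,2\}$; such an $L$ exists because \eqref{L-cond-wave} only bounds $L$ from below. Theorem~\ref{waveletth} then yields linear isomorphisms $I_1\colon\MBa(\rd)\to\tna(\rd)$ and $I_2\colon\MBb(\rd)\to\tnb(\rd)$, both given by the coefficient map $f\mapsto\{(f,\psi_m),\,2^{j\nd/2}(f,\psi^G_{j,m})\}$.

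The key point is that this coefficient map is intrinsic to $\mathcal{S}'(\rd)$: for a fixed tempered distribution $f$ the numbers $(f,\psi_m)$ and $2^{j\nd/2}(f,\psi^G_{j,m})$ do not depend on which Besov--Morrey space $f$ is regarded as belonging to. Hence, if \eqref{embed1fs} holds, then $I_1$ is a restriction of $I_2$, so $\tna(\rd)=I_1(\MBa(\rd))\subset I_2(\MBb(\rd))=\tnb(\rd)$, with continuous embedding since $I_1$, $I_2$ are isomorphisms. Conversely, assume $\tna(\rd)\hookrightarrow\tnb(\rd)$ and let $f\in\MBa(\rd)$. Then $I_1(f)\in\tnb(\rd)$, and the reconstruction and uniqueness parts of Theorem~\ref{waveletth} applied to $\MBb(\rd)$ show that the wavelet series with coefficients $I_1(f)$ converges in $\mathcal{S}'(\rd)$ to $f$ and represents an element of $\MBb(\rd)$, with
\[
\|f\mid\MBb(\rd)\|\ \ls\ \|I_1(f)\mid\tnb(\rd)\|\ \ls\ \|I_1(f)\mid\tna(\rd)\|\ \ls\ \|f\mid\MBa(\rd)\|.
\]
Thus \eqref{embed1fs} holds if and only if $\tna(\rd)\hookrightarrow\tnb(\rd)$, which by Remark~\ref{rem-n-tn} is equivalent to \eqref{embed1}, and Theorem~\ref{main} turns this into the conditions \eqref{cond0}--\eqref{cond2}.

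For the last assertion, the argument above shows that the embedding operator of \eqref{embed1fs} factorises as $I_2^{-1}\circ\tilde\iota\circ I_1$, where $\tilde\iota\colon\tna(\rd)\hookrightarrow\tnb(\rd)$ is the natural sequence-space embedding. Since $I_1$ and $I_2$ are isomorphisms, \eqref{embed1fs} is compact if and only if $\tilde\iota$ is, hence (Remark~\ref{rem-n-tn}) if and only if \eqref{embed1} is compact; but Theorem~\ref{main} states that \eqref{embed1} is never compact, which finishes the proof.

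The only step requiring a sentence of care is the commutativity $I_2\circ\iota=\tilde\iota\circ I_1$ of the transference triangle, equivalently the agreement of the two coefficient maps; this is immediate, because the coefficients are obtained by pairing $f\in\mathcal{S}'(\rd)$ with the fixed compactly supported $C^L$-functions $\psi_m$, $\psi^G_{j,m}$, with no reference to any quasi-norm, together with the uniqueness of the wavelet expansion asserted in Theorem~\ref{waveletth}. I therefore anticipate no genuine obstacle: the statement is, as announced in the excerpt, an immediate consequence of Theorems~\ref{waveletth} and \ref{main}.
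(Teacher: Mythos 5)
Your argument is correct and is exactly the route the paper takes: Theorem~\ref{fs} is stated there as an immediate consequence of the wavelet isomorphism of Theorem~\ref{waveletth}, the sequence-space characterisation of Theorem~\ref{main}, and Remark~\ref{rem-n-tn}, and your proposal simply writes out that transference (common choice of $L$, intrinsic coefficient maps, factorisation of the embedding, and the non-compactness carried over from the sequence level) in full detail. No gaps.
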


\begin{remark}
  If $\varphi_i(t)= t^\frac{\nd}{u_i}$, $i=1,2$, then Theorem~\ref{fs} coincides with \cite[Theorem~3.3]{hs12}. 
\end{remark}

\begin{corollary}
Let $s\in\rr$, $0<p<\infty$, $0<q\leq \infty$, and  $\varphi\in {\mathcal G}_{r}$, $r=\frac{p^2}{p_1}$.  If $0<p_1\le p$, then
 \[ 
  {\mathcal N}^{s+\frac{\nd}{p}}_{p,p_1,q}(\rd)\hookrightarrow {\mathcal N}^{s}_{\varphi,p,q}(\rd). \]  
\end{corollary}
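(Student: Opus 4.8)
The plan is to derive the embedding directly from Theorem~\ref{fs} (equivalently, from Theorem~\ref{main}) after translating everything into the $\varphi$-notation. By Remark~\ref{rem-coinc} the source space is ${\mathcal N}^{s+\frac{\nd}{p}}_{p,p_1,q}(\rd)=\mathcal{N}^{s_1}_{\varphi_1,p_1,q_1}(\rd)$ with $s_1=s+\frac{\nd}{p}$, first (integrability) index $p_1$, $q_1=q$ and $\varphi_1(t)=t^{\nd/p}$, while the target is $\mathcal{N}^{s_2}_{\varphi_2,p_2,q_2}(\rd)$ with $s_2=s$, $p_2=p$, $q_2=q$ and $\varphi_2=\varphi$. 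First I would settle admissibility: since $r=\frac{p^2}{p_1}\geq p$ and ${\mathcal G}_{p_2}\subset{\mathcal G}_{p_1}$ whenever $0<p_1\leq p_2<\infty$, we get $\varphi\in{\mathcal G}_r\subset{\mathcal G}_p$, so $\varphi_2\in{\mathcal G}_{p_2}$ as required; moreover $\varphi\not\equiv 0$ together with the ${\mathcal G}_r$-structure forces $\varphi>0$ on $(0,\infty)$, so after rescaling we may assume $\varphi(1)=1$, while $\varphi_1(1)=1$ holds automatically. With this choice $\varrho=\min\{1,p_1/p\}=p_1/p$, hence $\varphi_1(2^{-\nu})^{\varrho}=2^{-\nu\nd p_1/p^2}=2^{-\nu\nd/r}$.

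Next I would verify \eqref{cond0}. For $\nu\leq 0$, i.e.\ $t:=2^{-\nu}\geq 1$, one has
\[
\frac{\varphi_2(2^{-\nu})}{\varphi_1(2^{-\nu})^{\varrho}}=\varphi(2^{-\nu})\,2^{\nu\nd/r}=\varphi(t)\,t^{-\nd/r}\leq \varphi(1)=1,
\]
because $t\mapsto \varphi(t)t^{-\nd/r}$ is nonincreasing by $\varphi\in{\mathcal G}_r$. Thus \eqref{cond0} holds.

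Then I would turn to \eqref{cond2}. Since $q_1=q_2=q$ we have $q^*=\infty$, so only boundedness of the relevant sequence is at stake. By the same monotonicity, $\alpha_j=\sup_{\nu\leq j}\varphi(2^{-\nu})2^{\nu\nd/r}=\varphi(2^{-j})2^{j\nd/r}$, and therefore
\[
2^{j(s_2-s_1)}\,\alpha_j\,\frac{\varphi_1(2^{-j})^{\varrho}}{\varphi_1(2^{-j})}
=2^{-j\nd/p}\cdot\varphi(2^{-j})2^{j\nd/r}\cdot 2^{j\nd(\frac1p-\frac1r)}
=\varphi(2^{-j}),
\]
all powers of $2$ cancelling. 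Since $\varphi$ is nondecreasing, $\varphi(2^{-j})\leq\varphi(1)=1$ for $j\in\no$, so the sequence lies in $\ell_\infty$ and \eqref{cond2} is satisfied. Theorem~\ref{fs} then yields ${\mathcal N}^{s+\frac{\nd}{p}}_{p,p_1,q}(\rd)\hookrightarrow{\mathcal N}^{s}_{\varphi,p,q}(\rd)$. The degenerate case $p_1=p$, where $r=p$ and the source is the classical Besov space $B^{s+\nd/p}_{p,q}(\rd)$, is covered verbatim by the same computation.

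I do not anticipate a genuine obstacle: the whole argument is bookkeeping around Theorem~\ref{fs}. The only point worth isolating is that the precise value $r=p^2/p_1$ is exactly what makes the dyadic factors $2^{j\nd/p}$ and $2^{j\nd/r}$ in \eqref{cond0} and \eqref{cond2} collapse, reducing both conditions to the trivial fact that a nondecreasing $\varphi$ with $\varphi(1)=1$ satisfies $\varphi(2^{-j})\leq 1$ and $\varphi(t)t^{-\nd/r}\leq 1$ for $t\geq 1$.
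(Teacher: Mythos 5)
Your proof is correct and follows exactly the paper's route: the paper's own proof consists of the single sentence that the statement follows from Theorem~\ref{fs} with $\varphi_1(t)=t^{\nd/p}$ and $\varphi_2=\varphi$, and your computation simply makes explicit the verification of \eqref{cond0} and \eqref{cond2} (with $\varrho=p_1/p$, $\varphi_1(2^{-\nu})^{\varrho}=2^{-\nu\nd/r}$, $\alpha_j=\varphi(2^{-j})2^{j\nd/r}$, and $q^*=\infty$) that the paper leaves to the reader. The cancellation you isolate, reducing both conditions to the monotonicity properties of $\varphi\in\mathcal{G}_r$, is precisely the intended mechanism.
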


\begin{proof}
	The statement follows directly from Theorem \ref{fs} with $\varphi_1(t)= t^\frac{\nd}{p}$ and $\varphi_2(t)= \varphi(t)$. 
\end{proof}	

Now we collect further consequences of Theorem~\ref{fs} parallel to our approach in Section~\ref{emb-seq}.

\begin{corollary}\label{diff_spaces2}
    Let $s_i\in\rr$, $0<p_i<\infty$, $0<q_i\leq \infty$, and $\varphi_i\in {\mathcal G}_{p_i}$, for $i=1,2$.
Then 
\begin{equation} \label{nn-1fs}
\MBa(\rd) = \MBb(\rd) \qquad \text{(in the sense of equivalent norms)}
\end{equation}
 if and only if we have the equalities  \eqref{nn-2'}  
 and one of the two conditions \eqref{extremal} or \eqref{nn-2}
holds.  
  \end{corollary}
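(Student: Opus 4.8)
The plan is to reduce Corollary~\ref{diff_spaces2} to its sequence-space analogue Corollary~\ref{diff_spaces}, in the same way that Theorem~\ref{fs} reduces the function-space embedding to Theorem~\ref{main}. First I would note that \eqref{nn-1fs} holds if and only if both continuous embeddings
\[
\MBa(\rd)\hookrightarrow \MBb(\rd)\qquad\text{and}\qquad \MBb(\rd)\hookrightarrow \MBa(\rd)
\]
take place simultaneously. Applying Theorem~\ref{fs} to each of them, \eqref{nn-1fs} becomes equivalent to the requirement that conditions \eqref{cond0} and \eqref{cond2} hold both for the ordered pair $(1,2)$ and for the ordered pair $(2,1)$ (with $\varrho$, $\alpha_j$ formed accordingly in each case).

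Next I would invoke the analysis already carried out in the proof of Corollary~\ref{diff_spaces}. There Step~2 shows that \eqref{cond0} for both orderings forces either $p_1=p_2$ together with $\varphi_1(t)\sim\varphi_2(t)$ for $t\ge 1$, or $\sup_{t>0}\varphi_i(t)<\infty$ for $i=1,2$; substituting this back into \eqref{cond2} for both orderings then pins down $s_1=s_2$ and $q_1=q_2$ together with the dichotomy \eqref{extremal} versus \eqref{nn-2}. For the converse, \eqref{nn-2'} combined with \eqref{nn-2} gives \eqref{nn-1fs} at once (either from Theorem~\ref{fs} or directly from the definition of the spaces), while \eqref{nn-2'} combined with \eqref{extremal} is handled by a direct check of \eqref{cond0} and \eqref{cond2}, or, more conceptually, by observing via Corollary~\ref{cor-infty} that both spaces then degenerate to the same classical Besov space, so that equality again forces $s_1=s_2$, $q_1=q_2$. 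This already completes the argument.

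An equivalent, fully parallel route I would keep in reserve is to pass through the wavelet decomposition directly: choosing $L$ so large that \eqref{L-cond-wave} is satisfied for both triples $(s_i,p_i,q_i)$, Theorem~\ref{waveletth} realises $\MBa(\rd)$ and $\MBb(\rd)$ as isomorphic copies of $\tna(\rd)$ and $\tnb(\rd)$ under one and the same coefficient map $I$, whence \eqref{nn-1fs} is equivalent to $\tna(\rd)=\tnb(\rd)$, hence by Remark~\ref{rem-n-tn} to $\na(\rd)=\nb(\rd)$, and Corollary~\ref{diff_spaces} finishes the proof. I do not expect a genuine obstacle here; the only point needing a moment of care is ensuring that the \emph{same} wavelet system (equivalently, the same map $I$) serves both spaces, so that equality of the function spaces translates faithfully into equality of the sequence spaces — and this is exactly what the freedom in the choice of $L$ in \eqref{L-cond-wave} provides.
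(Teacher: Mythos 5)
Your proposal is correct and follows essentially the same route as the paper, which simply declares the corollary to be ``the function space version of Corollary~\ref{diff_spaces}'', i.e.\ it reduces both embeddings to the sequence-space setting via Theorem~\ref{fs} (equivalently, via the wavelet isomorphism of Theorem~\ref{waveletth} and Remark~\ref{rem-n-tn}) and then invokes the characterisation already proved there. Your additional care about using one and the same wavelet system, with $L$ large enough for both parameter triples, is a sensible remark but not a departure from the paper's argument.
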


\begin{proof}
This is the function space version of Corollary~\ref{diff_spaces}.
\end{proof}

 \begin{corollary}\label{same_phi_fs}
      Let $s_i\in\rr$, $0<p_i<\infty$, $0<q_i\leq \infty$ for $i=1,2$, $\varphi\in {\mathcal G}_{\max(p_1,p_2)}$, and $\frac{1}{q^*}=(\frac{1}{q_2}-\frac{1}{q_1})_+$. Then 
\begin{equation} 
\MBfa(\rd) \hookrightarrow \MBfb(\rd) 
\end{equation}
 if and only if  $p_1\ge p_2$ and \eqref{nn-3} holds or  $p_1 < p_2$ and \eqref{nn-4} holds.
    \end{corollary}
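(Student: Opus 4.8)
The plan is to reduce Corollary~\ref{same_phi_fs} to its sequence space counterpart, Corollary~\ref{same_phi}, exactly as Theorem~\ref{fs} was obtained from Theorem~\ref{main}. First I would invoke the wavelet characterisation Theorem~\ref{waveletth}, which provides, for each of the spaces $\MBfa(\rd)$ and $\MBfb(\rd)$, a linear isomorphism onto the corresponding modified sequence spaces $\tnfa(\rd)$ and $\tnfb(\rd)$; here one needs to fix a single system of Daubechies wavelets with $L$ large enough to satisfy \eqref{L-cond-wave} simultaneously for $(s_1,p_1)$ and $(s_2,p_2)$, which is clearly possible. Consequently the function space embedding \eqref{embed1fs} in this setting holds if and only if $\tnfa(\rd)\hookrightarrow\tnfb(\rd)$, and by Remark~\ref{rem-n-tn} the latter is equivalent to $\nfa(\rd)\hookrightarrow\nfb(\rd)$ since the first term in \eqref{norm-tn} behaves like the $j=0$ term of the Besov-Morrey sequence norm.

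Next I would simply quote Corollary~\ref{same_phi}: for the sequence spaces with a common weight $\varphi\in\mathcal{G}_{\max(p_1,p_2)}$, the embedding $\nfa(\rd)\hookrightarrow\nfb(\rd)$ holds if and only if either $p_1\ge p_2$ and \eqref{nn-3} holds, or $p_1<p_2$ and \eqref{nn-4} holds. Chaining the two equivalences yields the statement. Strictly speaking, one should note that Corollary~\ref{same_phi} is stated under the normalisation $\varphi(1)=1$; but the hypotheses of Corollary~\ref{same_phi_fs} are invariant under replacing $\varphi$ by $\varphi/\varphi(1)$, as are conditions \eqref{nn-3} and \eqref{nn-4}, and the spaces change only up to equivalent quasi-norms, so this normalisation is harmless. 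Alternatively, one can feed Theorem~\ref{fs} directly with $\varphi_1=\varphi_2=\varphi$ and carry out the same reduction of \eqref{cond0}--\eqref{cond2} that was done in the proof of Corollary~\ref{same_phi}: when $p_1\ge p_2$ one has $\varrho=1$, $\alpha_j\equiv1$, \eqref{cond0} is automatic and \eqref{cond2} collapses to $\{2^{j(s_2-s_1)}\}_j\in\ell_{q^*}$, i.e.\ \eqref{nn-3}; when $p_1<p_2$ one has $\varrho=\frac{p_1}{p_2}<1$, $\alpha_j\sim1$, \eqref{cond0} becomes $\sup_{t>0}\varphi(t)<\infty$, and \eqref{cond2} becomes the second part of \eqref{nn-4}.

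There is essentially no obstacle here: the work has all been done in Theorem~\ref{fs} and Corollary~\ref{same_phi}, and this corollary is a direct translation. The only point requiring a word of care is the bookkeeping around the normalisation $\varphi(1)=1$ and the fact that the $\mathcal{G}_{\max(p_1,p_2)}$ hypothesis guarantees $\varphi\in\mathcal{G}_{p_1}\cap\mathcal{G}_{p_2}$, so that Theorem~\ref{fs} applies with $\varphi_1=\varphi_2=\varphi$; since $\mathcal{G}_{p_2}\subset\mathcal{G}_{p_1}$ when $p_1\le p_2$ and $\mathcal{G}_{p_1}\subset\mathcal{G}_{p_2}$ when $p_2\le p_1$, assuming $\varphi\in\mathcal{G}_{\max(p_1,p_2)}$ indeed suffices. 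Thus the proof reduces to the single sentence: this is the function space version of Corollary~\ref{same_phi}, obtained via Theorem~\ref{fs}.

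\medskip

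\noindent\textbf{Proof.} This is the immediate function space counterpart of Corollary~\ref{same_phi}. Since $\varphi\in\mathcal{G}_{\max(p_1,p_2)}\subset\mathcal{G}_{p_1}\cap\mathcal{G}_{p_2}$, we may apply Theorem~\ref{fs} with $\varphi_1=\varphi_2=\varphi$ (after the harmless normalisation $\varphi(1)=1$, which affects neither the hypotheses nor conditions \eqref{nn-3}, \eqref{nn-4}). In case $p_1\ge p_2$ we have $\varrho=1$ and $\alpha_j\equiv1$, so \eqref{cond0} holds automatically and \eqref{cond2} reduces to $\{2^{j(s_2-s_1)}\}_j\in\ell_{q^*}$, i.e.\ to \eqref{nn-3}. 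In case $p_1<p_2$ we have $\varrho=\frac{p_1}{p_2}<1$ and $\alpha_j\sim1$, so \eqref{cond0} is equivalent to $\sup_{t>0}\varphi(t)<\infty$ and \eqref{cond2} to $\{2^{j(s_2-s_1)}\varphi(2^{-j})^{\frac{p_1}{p_2}-1}\}_j\in\ell_{q^*}$, which together form \eqref{nn-4}. \qed
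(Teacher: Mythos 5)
Your proposal is correct and follows exactly the paper's route: the paper's proof of this corollary is the single sentence that it is the function space counterpart of Corollary~\ref{same_phi}, obtained via Theorem~\ref{fs}, which is precisely your reduction. Your extra remarks on the normalisation $\varphi(1)=1$ and on $\mathcal{G}_{\max(p_1,p_2)}\subset\mathcal{G}_{p_1}\cap\mathcal{G}_{p_2}$ are accurate bookkeeping that the paper leaves implicit.
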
 

 \begin{proof}
This is the counterpart for function spaces of Corollary~\ref{same_phi}.
\end{proof}

  \begin{corollary}\label{gen_morrey_besov_fs}
    Let $s_i\in\rr$, $0<p_i<\infty$, $0<q_i\leq \infty$ for $i=1,2$, and $\varphi_1\in {\mathcal G}_{p_1}$. Denote again $\frac{1}{q^*}=(\frac{1}{q_2}-\frac{1}{q_1})_+$. Then 
    \[
    \MBa(\rd) \hookrightarrow B^{s_2}_{p_2,q_2}(\rd)
        \]
        if and only if the conditions \eqref{nn-5} hold.
  \end{corollary}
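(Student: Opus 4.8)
The plan is to deduce this immediately from the function space embedding Theorem~\ref{fs} by choosing the second weight to be a pure power. Recall from Remark~\ref{rem-coinc} that $B^{s_2}_{p_2,q_2}(\rd)=\mathcal{N}^{s_2}_{p_2,p_2,q_2}(\rd)=\mathcal{N}^{s_2}_{\varphi_2,p_2,q_2}(\rd)$ for $\varphi_2(t)=t^{\nd/p_2}$, and that $\varphi_2\in\mathcal{G}_{p_2}$ with $\varphi_2(1)=1$ automatically (as usual we may also normalise $\varphi_1(1)=1$). Hence, by Theorem~\ref{fs}, the embedding $\MBa(\rd)\hookrightarrow B^{s_2}_{p_2,q_2}(\rd)$ holds if and only if conditions \eqref{cond0} and \eqref{cond2} are satisfied for the pair $(\varphi_1,\varphi_2)$, with $\varrho=\min(1,p_1/p_2)$ and $\alpha_j=\sup_{\nu\le j}\varphi_2(2^{-\nu})\varphi_1(2^{-\nu})^{-\varrho}$. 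Thus the whole task is to translate \eqref{cond0} and \eqref{cond2} into \eqref{nn-5}; this is precisely the computation already carried out at the sequence space level in the proof of Corollary~\ref{gen_morrey_besov}, and I would simply repeat it in the present notation.

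First I would rewrite \eqref{cond0}. Writing $t=2^{-\nu}\ge 1$, it says $\sup_{t\ge 1} t^{\nd/p_2}\varphi_1(t)^{-\varrho}<\infty$, i.e.\ $\varphi_1(t)\gtrsim t^{\nd/(p_2\varrho)}$ for $t\ge 1$. On the other hand, $\varphi_1\in\mathcal{G}_{p_1}$ together with $\varphi_1(1)=1$ gives $\varphi_1(t)\le t^{\nd/p_1}$ for $t\ge 1$. Comparing the two bounds as $t\to\infty$ forces $\frac{1}{p_2\varrho}\le\frac{1}{p_1}$; since $\varrho=\min(1,p_1/p_2)$, this is impossible when $p_1>p_2$ (then $\varrho=1$ and it would require $p_1\le p_2$), so necessarily $p_1\le p_2$, whence $\varrho=p_1/p_2$ and the inequality becomes an equality, yielding $\varphi_1(t)\sim t^{\nd/p_1}$ for $t\ge 1$. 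This is exactly the first part of \eqref{nn-5}, and conversely this information makes \eqref{cond0} hold.

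Second, with $p_1\le p_2$ in hand I would check that $\alpha_j\sim 1$ uniformly in $j\in\no$. For $\nu\le 0$ this is \eqref{cond0} (its supremum is in fact comparable to $1$ given $\varphi_1(t)\sim t^{\nd/p_1}$ on $[1,\infty)$), while for $0\le\nu\le j$ the lower bound $\varphi_1(2^{-\nu})\ge 2^{-\nu\nd/p_1}$, again from $\varphi_1\in\mathcal{G}_{p_1}$ and $\varphi_1(1)=1$, gives
\[
\frac{\varphi_2(2^{-\nu})}{\varphi_1(2^{-\nu})^{\varrho}}=\frac{2^{-\nu\nd/p_2}}{\varphi_1(2^{-\nu})^{p_1/p_2}}\le\frac{2^{-\nu\nd/p_2}}{2^{-\nu\nd/p_2}}=1,
\]
while the value at $\nu=0$ equals $1$, so $1\le\alpha_j\lesssim 1$. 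Consequently, using $\varphi_1(2^{-j})^{\varrho}/\varphi_1(2^{-j})=\varphi_1(2^{-j})^{p_1/p_2-1}$, condition \eqref{cond2} becomes exactly $\{2^{j(s_2-s_1)}\varphi_1(2^{-j})^{p_1/p_2-1}\}_j\in\ell_{q^*}$, the remaining part of \eqref{nn-5}. Combining the two reductions proves the stated equivalence. I do not expect any genuine obstacle here; the only mild point requiring care is checking that the two-sided control on $\varphi_1$ furnished by $\mathcal{G}_{p_1}$ (namely $\varphi_1(t)\le t^{\nd/p_1}$ for $t\ge 1$ and $\varphi_1(2^{-\nu})\ge 2^{-\nu\nd/p_1}$ for $\nu\ge 0$) is simultaneously what forces the necessity of $p_1\le p_2$ and $\varphi_1(t)\sim t^{\nd/p_1}$ near infinity and what makes $(\alpha_j)_j$ bounded.
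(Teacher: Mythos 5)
Your proposal is correct and follows essentially the same route as the paper: the paper proves this corollary by invoking Theorem~\ref{fs} (equivalently, the wavelet isomorphism of Theorem~\ref{waveletth} reducing to the sequence space Corollary~\ref{gen_morrey_besov}) with $\varphi_2(t)=t^{\nd/p_2}$, and then performs exactly your translation of \eqref{cond0} into $p_1\leq p_2$ plus $\varphi_1(t)\sim t^{\nd/p_1}$ for $t\geq 1$, and of \eqref{cond2} into the $\ell_{q^*}$ condition via the bound $1\leq\alpha_j\lesssim 1$ obtained from $\varphi_1(2^{-\nu})\geq 2^{-\nu\nd/p_1}$. No gaps.
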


\begin{proof}
This corresponds to Corollary~\ref{gen_morrey_besov}.
\end{proof}
  
In combination with the well-known embedding $B^0_{r,1}(\rd)\hookrightarrow L_r(\rd)$, $1\leq r<\infty$, we thus obtain from Corollary~\ref{gen_morrey_besov_fs} the following result.

  \begin{corollary}\label{gen_morrey_Lr}
    Let $s\in\rr$, $0<p<\infty$, $0<q\leq \infty$, $\varphi\in {\mathcal G}_{p}$, with $\varphi (t) \sim t^{\frac{\nd}{p}}$, $ t\geq 1$. Assume $1\leq r<\infty$ with $r\geq p$, and let $\frac{1}{q'}=(1-\frac{1}{q})_+$. Then 
    \[
    \MB(\rd) \hookrightarrow L_r(\rd)
        \]
        if $\ \left\{2^{-js }\varphi(2^{-j})^{\frac{p}{r}-1}\right\}_j \in \ell_{q'}$.
  \end{corollary}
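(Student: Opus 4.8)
The plan is to combine Corollary~\ref{gen_morrey_besov_fs} with the classical embedding $B^0_{r,1}(\rd)\hookrightarrow L_r(\rd)$, which holds for $1\le r<\infty$. Concretely, I would first observe that it suffices to show
\[
\MB(\rd) \hookrightarrow B^0_{r,1}(\rd),
\]
and then compose with $B^0_{r,1}(\rd)\hookrightarrow L_r(\rd)$ to reach the desired conclusion.

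Next I would apply Corollary~\ref{gen_morrey_besov_fs} with the identifications $\varphi_1=\varphi$, $p_1=p$, $s_1=s$, $q_1=q$ on the source side, and on the target side the Besov space $B^{s_2}_{p_2,q_2}(\rd)$ with $s_2=0$, $p_2=r$, $q_2=1$. The hypotheses of that corollary require $\varphi_1\in\mathcal{G}_{p_1}$, which is assumed, and that condition \eqref{nn-5} holds; here \eqref{nn-5} reads: $p_1\le p_2$ (that is, $p\le r$, which is assumed), $\varphi_1(t)\sim t^{\nd/p_1}$ for $t\ge 1$ (that is, $\varphi(t)\sim t^{\nd/p}$ for $t\ge1$, which is assumed), and
\[
\left\{2^{j(s_2-s_1)}\varphi_1(2^{-j})^{\frac{p_1}{p_2}-1}\right\}_j = \left\{2^{-js}\varphi(2^{-j})^{\frac{p}{r}-1}\right\}_j \in \ell_{q^*},
\]
where $\frac{1}{q^*}=\bigl(\frac{1}{q_2}-\frac{1}{q_1}\bigr)_+ = \bigl(1-\frac{1}{q}\bigr)_+ = \frac{1}{q'}$. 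This is precisely the summability assumption in the statement, so Corollary~\ref{gen_morrey_besov_fs} yields $\MB(\rd)\hookrightarrow B^0_{r,1}(\rd)$.

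Finally, chaining this with $B^0_{r,1}(\rd)\hookrightarrow L_r(\rd)$ (valid for $1\le r<\infty$, which is part of the hypothesis) gives $\MB(\rd)\hookrightarrow L_r(\rd)$, completing the proof. There is no real obstacle here: the only mild points to check are the bookkeeping of parameters in Corollary~\ref{gen_morrey_besov_fs} (in particular that $q^*=q'$ under the present choice $q_2=1$) and that all standing hypotheses of that corollary are met, both of which are immediate. The substantive content is entirely contained in Theorem~\ref{fs} (via Corollary~\ref{gen_morrey_besov_fs}) and the classical Besov--Lebesgue embedding.
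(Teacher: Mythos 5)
Your proposal is correct and follows exactly the paper's own route: the corollary is stated immediately after the remark that it is obtained ``in combination with the well-known embedding $B^0_{r,1}(\rd)\hookrightarrow L_r(\rd)$'' from Corollary~\ref{gen_morrey_besov_fs}, and your parameter bookkeeping ($s_2=0$, $p_2=r$, $q_2=1$, hence $q^*=q'$) matches what the paper intends. Nothing further is needed.
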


  Finally we return to the situation studied in Corollary~\ref{besov_gen_morrey}.
  
  \begin{corollary}\label{besov_gen_morrey_fs}
      Let $s_i\in\rr$, $0<p_i<\infty$, $0<q_i\leq \infty$ for $i=1,2$, and $\varphi_2\in {\mathcal G}_{p_2}$. Denote again $\frac{1}{q^*}=(\frac{1}{q_2}-\frac{1}{q_1})_+$.
  Then 
\[  B^{s_1}_{p_1,q_1}(\rd)  \hookrightarrow  \MBb(\rd)\]
 if and only if 
 \bit 
   \item[{\upshape\bfseries (i)}]
  $p_1\leq p_2$ and the condition  \eqref{nn-6} holds  
  \item[ ] or
  \item[{\upshape\bfseries (ii)}]
    $p_1> p_2$ and the conditions  \eqref{cond0'}--\eqref{cond2'} hold.
    \eit   
\end{corollary}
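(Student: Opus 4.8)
The plan is to obtain this statement as the function-space counterpart of the sequence-space result Corollary~\ref{besov_gen_morrey}, exactly in the way Theorem~\ref{fs} was deduced from Theorem~\ref{main} via the wavelet characterisation Theorem~\ref{waveletth}. First I would observe that the classical Besov space is a generalised Besov--Morrey space, $B^{s_1}_{p_1,q_1}(\rd)={\mathcal N}^{s_1}_{\varphi_1,p_1,q_1}(\rd)$ with $\varphi_1(t)=t^{\nd/p_1}$, $t>0$ (cf.\ Remark~\ref{rem-coinc}), and that this $\varphi_1$ belongs to $\mathcal{G}_{p_1}$ by Example~\ref{exm-Gp}(i) with $u=p_1$; after a harmless rescaling we may assume $\varphi_1(1)=\varphi_2(1)=1$. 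Thus the embedding under consideration is precisely one of the form \eqref{embed1fs}, so Theorem~\ref{fs} is applicable.

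Next I would apply Theorem~\ref{fs} with this choice of $\varphi_1$ and the given $\varphi_2$: the embedding $B^{s_1}_{p_1,q_1}(\rd)\hookrightarrow\MBb(\rd)$ holds if and only if \eqref{cond0} and \eqref{cond2} are satisfied, where as usual $\varrho=\min(1,p_1/p_2)$ and $\alpha_j=\sup_{\nu\le j}\varphi_2(2^{-\nu})\varphi_1(2^{-\nu})^{-\varrho}$. At this point the work is already done: this is exactly the situation treated in the proof of Corollary~\ref{besov_gen_morrey}, where it is shown that for $p_1\le p_2$ the pair \eqref{cond0}--\eqref{cond2} is equivalent to \eqref{nn-6}, whereas for $p_1>p_2$ (so $\varrho=1$, $\alpha_j=\sup_{0\le\nu\le j}2^{\nu\nd/p_1}\varphi_2(2^{-\nu})$, $\varphi_1(2^{-j})^{\varrho-1}=1$) it reduces to the two conditions \eqref{cond0'}--\eqref{cond2'}. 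Reading off those equivalences yields parts (i) and (ii) respectively.

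I do not expect any real obstacle here, since the argument is a transfer of an already-proved sequence-space statement; the only points requiring a word of care are verifying that $\varphi_1(t)=t^{\nd/p_1}\in\mathcal{G}_{p_1}$ (so that the hypotheses of Theorem~\ref{fs}, including the normalisation, are literally met) and splitting into the two regimes $p_1\le p_2$ and $p_1>p_2$, which produce the two alternatives in the statement. No estimates beyond those already carried out in the proof of Corollary~\ref{besov_gen_morrey} are needed.
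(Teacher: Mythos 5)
Your proposal is correct and follows exactly the route the paper intends (and uses for the neighbouring corollaries): identify $B^{s_1}_{p_1,q_1}(\rd)={\mathcal N}^{s_1}_{\varphi_1,p_1,q_1}(\rd)$ with $\varphi_1(t)=t^{\nd/p_1}\in\mathcal{G}_{p_1}$, invoke Theorem~\ref{fs}, and read off the two regimes from the already-established sequence-space equivalences in Corollary~\ref{besov_gen_morrey}. No gaps; the only cosmetic point is that $\alpha_j$ is a supremum over all $\nu\le j$, which is comparable to the supremum over $0\le\nu\le j$ appearing in \eqref{cond2'} once \eqref{cond0'} holds, exactly as in the paper.
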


  \begin{remark}
 Obviously one can also explicate Theorem~\ref{fs} for the example functions, similar to Examples~\ref{rmk2}, Example~\ref{ex_same_phi} etc. 
For instance, we can prove that the formula \eqref{loc-loc} gives  sufficient and necessary conditions for the embedding  of two local Besov-Morrey spaces. 
\end{remark}
  
In the end we study some endpoint situations in Corollaries~\ref{gen_morrey_besov_fs}-\ref{besov_gen_morrey_fs}, recall also the sequence space counterpart in Corollary~\ref{cor-infty}. We begin with an extension of Corollary~\ref{gen_morrey_Lr} to $r=\infty$. Recall our notation $\frac{1}{q'}=(1-\frac1q)_+$ for $0<q\leq\infty$.

\begin{corollary}
Let $s\in\rr$, $0<p<\infty$, $0<q\leq \infty$, and $\varphi\in {\mathcal G}_{p}$. Assume that
\begin{equation}\label{suff-Linf}
\left\{ 2^{-js} \varphi(2^{-j})^{-1} \right\}_{j\in\no} \in \ell_{q'} \ .
\end{equation}
Then
\[
{\mathcal N}^{s}_{\varphi,p,q}(\rd) \hookrightarrow L_\infty(\rd).
\]
\end{corollary}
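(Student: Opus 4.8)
The plan is to obtain the embedding $\mathcal{N}^s_{\varphi,p,q}(\rd)\hookrightarrow L_\infty(\rd)$ as a composition of two embeddings already available in the paper: a generalised-Besov-Morrey-to-Besov embedding from Corollary~\ref{gen_morrey_besov_fs}, followed by the classical embedding $B^{\sigma}_{r,1}(\rd)\hookrightarrow L_\infty(\rd)$ valid when $\sigma=\frac{\nd}{r}$ (or $\sigma>\nd/r$), exactly as Corollary~\ref{gen_morrey_Lr} used $B^0_{r,1}\hookrightarrow L_r$. However, a direct application of Corollary~\ref{gen_morrey_besov_fs} requires $\varphi(t)\sim t^{\nd/p}$ for $t\geq 1$, which we do not want to assume here, so first I would reduce to that case.

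\emph{Step 1 (reduction in the large-$t$ regime).} Since $\varphi\in\mathcal{G}_p$, the function $\varphi(t)t^{-\nd/p}$ is nonincreasing, hence $\varphi(t)\leq \varphi(1)t^{\nd/p}=t^{\nd/p}$ for $t\geq 1$; also $\varphi$ is nondecreasing so $\varphi(t)\geq \varphi(1)=1$ for $t\geq 1$. Define $\widetilde\varphi(t):=\varphi(t)$ for $0<t\leq 1$ and $\widetilde\varphi(t):=t^{\nd/p}$ for $t\geq 1$. Then $\widetilde\varphi\in\mathcal{G}_p$, $\widetilde\varphi(t)\geq\varphi(t)$ for all $t>0$, and $\widetilde\varphi(t)\sim t^{\nd/p}$ for $t\geq 1$. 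By the monotonicity of the Morrey norm in $\varphi$ (Remark after Lemma~\ref{equiv-norm}, or directly from \eqref{Morrey-norm}), $\mathcal{M}_{\varphi,p}(\rd)\hookrightarrow\mathcal{M}_{\widetilde\varphi,p}(\rd)$, and consequently $\mathcal{N}^s_{\varphi,p,q}(\rd)\hookrightarrow\mathcal{N}^s_{\widetilde\varphi,p,q}(\rd)$. Since $\varphi$ and $\widetilde\varphi$ agree on $(0,1]$, condition \eqref{suff-Linf} is unchanged when $\varphi$ is replaced by $\widetilde\varphi$. So it suffices to treat the case $\varphi(t)\sim t^{\nd/p}$, $t\geq 1$.

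\emph{Step 2 (apply Corollary~\ref{gen_morrey_besov_fs}).} We want $\mathcal{N}^s_{\varphi,p,q}(\rd)\hookrightarrow B^{\nd/p}_{p,1}(\rd)$, i.e.\ we take $p_2=p_1=p$, $q_2=1$, $s_2=\nd/p$, $s_1=s$, $\varphi_1=\varphi$ in Corollary~\ref{gen_morrey_besov_fs}. Then $p_1\leq p_2$ holds with equality, the requirement $\varphi_1(t)\sim t^{\nd/p_1}$ for $t\geq 1$ is exactly what Step~1 secured, $q^*$ is determined by $\frac1{q^*}=(1-\frac1q)_+=\frac1{q'}$, and the exponent $\frac{p_1}{p_2}-1=0$, so the sequence condition \eqref{nn-5} reads $\{2^{j(\nd/p - s)}\varphi(2^{-j})^{-1}\}_j=\{2^{-js}\varphi(2^{-j})^{-1}\cdot 2^{j\nd/p}\}_j$. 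Wait — this is not quite \eqref{suff-Linf}; I need the target exponent to be $0$, not $\nd/p$. Let me instead target $B^{\widetilde s}_{p,1}(\rd)$ with $\widetilde s = \nd/p$ being the Besov embedding into $L_\infty$; the correct reading of \eqref{nn-5} with $s_2=\nd/p$, $\varphi_1(2^{-j})^{p_1/p_2-1}=\varphi(2^{-j})^0 =1$ gives $\{2^{j(\nd/p-s)}\}_j\in\ell_{q'}$, which is generally false. The resolution is that one should \emph{not} strip the $\varphi$ via Corollary~\ref{gen_morrey_besov_fs} with $p_1=p_2$; instead apply it with a smaller target integrability exponent. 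This is the subtlety I expect to be the main obstacle: choosing the right intermediate Besov space. The natural choice is to use Corollary~\ref{gen_morrey_besov_fs} with general $p_2\ge p_1=p$: there $\varphi(2^{-j})^{p/p_2-1}$ appears, and letting $p_2\to\infty$ formally gives $\varphi(2^{-j})^{-1}$ and target smoothness $s_2$ with Besov embedding $B^{s_2}_{p_2,1}\hookrightarrow L_\infty$ needing $s_2=\nd/p_2\to 0$. So: fix any $p_2$ with $p\le p_2<\infty$, set $s_2=\nd/p_2$, $q_2=1$; then $B^{\nd/p_2}_{p_2,1}(\rd)\hookrightarrow L_\infty(\rd)$ holds, and Corollary~\ref{gen_morrey_besov_fs}(i) (with $\varphi_1=\varphi$, which after Step~1 satisfies $\varphi(t)\sim t^{\nd/p}$, $t\ge 1$) gives $\mathcal{N}^s_{\varphi,p,q}(\rd)\hookrightarrow B^{\nd/p_2}_{p_2,1}(\rd)$ iff $\{2^{j(\nd/p_2-s)}\varphi(2^{-j})^{p/p_2-1}\}_j\in\ell_{q'}$. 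To finish cleanly I would then choose $p_2$ appropriately, or rather observe that \eqref{suff-Linf} already gives what we want by a limiting/monotonicity argument, and if a single clean choice is elusive, fall back to proving $B^{\nd/p}_{p,1}\hookrightarrow L_\infty$ composed with $\mathcal{N}^s_{\varphi,p,q}\hookrightarrow \mathcal{N}^{\nd/p}_{t^{\nd/p},p,1}=B^{\nd/p}_{p,1}$ — but that last embedding by Theorem~\ref{fs} needs $\{2^{j(\nd/p-s)}\varphi(2^{-j})\cdot\varphi(2^{-j})^{-1}\}$... The honest statement: the cleanest route is Corollary~\ref{besov_gen_morrey_fs} in reverse is not available; the intended proof is

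\emph{Step 3 (assembling the proof).} Take $\varphi_2(t)=t^{\nd/p}$ and apply Corollary~\ref{gen_morrey_besov_fs} after the Step~1 reduction to get $\mathcal{N}^s_{\varphi,p,q}(\rd)\hookrightarrow B^{s_2}_{p,q_2}(\rd)$ where we are free to pick $q_2=1$; then \eqref{nn-5} (with $p_1=p_2=p$, so the $\varphi$-power is trivial) forces $s_2\le s$, and with the maximal admissible $s_2$ we invoke $B^{s_2}_{p,1}(\rd)\hookrightarrow L_\infty(\rd)$ when $s_2\ge\nd/p$. Putting $s_2=\nd/p$ needs $s\ge\nd/p$, and then \eqref{suff-Linf} with $\varphi(2^{-j})\le 2^{j\nd/p}$ (no: $\varphi(2^{-j})\ge 2^{-j\nd/p}\cdot$const since $t^{-\nd/p}\varphi(t)$ nonincreasing gives $\varphi(2^{-j})\le 2^{j\nd/p}$ for $j\ge 0$, i.e.\ $2^{-js}\varphi(2^{-j})^{-1}\ge 2^{-j(s-\nd/p)}$) — hmm, I realize a self-contained writeup needs care here, so in the paper I would present it as: apply Corollary~\ref{gen_morrey_besov_fs} with $p_2$ chosen so that $\nd/p_2<s$ is \emph{not} needed, take $q_2=1$, target $B^{\sigma}_{p_2,1}(\rd)$ with $\sigma$ determined by the $\ell_{q'}$-membership of $\{2^{-js}\varphi(2^{-j})^{p/p_2-1}2^{j\sigma}\}$; then use $B^\sigma_{p_2,1}\hookrightarrow L_\infty$ for $\sigma=\nd/p_2$. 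The main obstacle is verifying that \eqref{suff-Linf} indeed implies the corresponding $\ell_{q'}$-condition for a suitable $(\sigma,p_2)$ pair — this is a bookkeeping exercise using $\varphi\in\mathcal{G}_p$ (which controls $\varphi(2^{-j})^{p/p_2-1}$ against powers of $2^j$), and I would carry it out by choosing $p_2=p$, $\sigma=0$ is forbidden; choosing instead to go through Corollary~\ref{gen_morrey_Lr} at the ``$r=\infty$'' endpoint directly, mimicking its proof with $B^0_{r,1}\hookrightarrow L_r$ replaced by $B^{\nd/r}_{r,1}\hookrightarrow L_\infty$ and letting $r\to\infty$, is the most transparent presentation.
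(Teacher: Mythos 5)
Your proposal does not close, and the route you chose cannot be made to work without an extra idea that you never supply. The obstruction you keep running into is real: if you pass through a classical Besov space $B^{\sigma}_{p_2,1}(\rd)$ with \emph{finite} $p_2$ and then use $B^{\nd/p_2}_{p_2,1}(\rd)\hookrightarrow L_\infty(\rd)$, Corollary~\ref{gen_morrey_besov_fs} forces the sequence condition $\{2^{j(\nd/p_2-s)}\varphi(2^{-j})^{p/p_2-1}\}_j\in\ell_{q'}$. Comparing with \eqref{suff-Linf}, the quotient of the two $j$-th terms is $2^{j\nd/p_2}\varphi(2^{-j})^{p/p_2}$, which by $\varphi\in\mathcal{G}_p$ (namely $\varphi(2^{-j})\geq 2^{-j\nd/p}$) is always $\geq 1$; so for every finite $p_2$ the condition you would need is \emph{strictly stronger} than \eqref{suff-Linf} except in the extremal case $\varphi(t)=t^{\nd/p}$. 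Your two proposed escapes --- ``letting $p_2\to\infty$'' or ``letting $r\to\infty$ in Corollary~\ref{gen_morrey_Lr}'' --- are not valid steps: an embedding that holds for no finite value of the parameter cannot be obtained as a limit of such embeddings, and you give no argument to that effect. You acknowledge the gap several times but never resolve it, so as written this is not a proof.

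The missing idea is already in the paper: by \eqref{M1p}, $L_\infty(\rd)$ itself is a generalised Morrey space, $L_\infty(\rd)=\mathcal{M}_{\varphi_0,p}(\rd)$ with $\varphi_0\equiv 1$, and $\varphi_0\in\mathcal{G}_p$. Hence one applies Theorem~\ref{fs} directly with $\varphi_1=\varphi$, $\varphi_2=\varphi_0\equiv 1$, $s_1=s$, $s_2=0$, $p_1=p_2=p$, $q_1=q$, $q_2=1$, so that $\varrho=1$ and $q^*=q'$. Condition \eqref{cond0} holds since $\varphi(2^{-\nu})\geq\varphi(1)=1$ for $\nu\leq 0$; moreover $\alpha_j=\sup_{\nu\leq j}\varphi(2^{-\nu})^{-1}=\varphi(2^{-j})^{-1}$ because $\varphi$ is nondecreasing, so \eqref{cond2} is \emph{exactly} \eqref{suff-Linf}. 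This yields ${\mathcal N}^{s}_{\varphi,p,q}(\rd)\hookrightarrow{\mathcal N}^{0}_{\varphi_0,p,1}(\rd)=B^0_{\infty,1}(\rd)\hookrightarrow L_\infty(\rd)$. In short: the target space should be realised as a Morrey--Besov space with $\varphi_2\equiv 1$ (the ``$u=\infty$'' endpoint), not approximated by classical Besov spaces with finite integrability.
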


\begin{proof}
We apply Theorem~\ref{fs} with $\varphi_1=\varphi$, $\varphi_2\equiv 1$, $s_1=s$, $s_2=0$, $p_1=p_2=p$, $q_1=q$, $q_2=1$ and hence $q^*=q'$. Thus, in view of \eqref{M1p}, 
\[
{\mathcal N}^{s}_{\varphi,p,q}(\rd) \hookrightarrow B^0_{\infty,1}(\rd)\hookrightarrow L_\infty(\rd),
\]
where the latter embedding is well-known.
\end{proof}

\begin{remark}
  In case of $\varphi(t)=t^{\frac{\nd}{u}}$, $0<p\leq u<\infty$, \eqref{suff-Linf} reads as 
\[
\begin{cases}
s > \frac{\nd}{u}, & \text{if}\quad 1<q\leq\infty, \\ s\geq \frac{\nd}{u}, & \text{if}\quad 0<q\leq 1, 
\end{cases}
\]
and this is even known to be also necessary for the embedding 
${\mathcal N}^{s}_{u,p,q}(\rd) \hookrightarrow L_\infty(\rd)$, cf. \cite[Prop.~5.5]{hs13}. 
\end{remark}

Next we return to Sawano's observation  that for $\ \inf_{t>0} \varphi(t)>0$, then $\mathcal{M}_{\varphi,p}(\rd)\hookrightarrow L_\infty(\rd)$, while 
$\ \sup_{t>0} \varphi(t)<\infty\ $ implies $ L_\infty(\rd)\hookrightarrow\mathcal{M}_{\varphi,p}(\rd)$, leading as a special case to \eqref{M1p}, cf. \cite{Saw18}. For convenience, let us denote 
these conditions by
\begin{align*}
\mathbf{(I)}  &\qquad \inf_{t>0} \varphi(t)>0,\qquad \text{and}\\
\mathbf{(S)} &\qquad \sup_{t>0} \varphi(t)<\infty.
  \end{align*}

\begin{corollary}
Let $s\in\rr$, $0<p<\infty$, $0<q\leq \infty$, and $\varphi\in {\mathcal G}_{p}$. 
\begin{itemize}
\item[{\bfseries\upshape (i)}]
 If  $\varphi$ satisfies $\mathbf{(I)}$, then 
 \[ {\mathcal N}^{s}_{\varphi,p,q}(\rd) \hookrightarrow B^s_{\infty,q}(\rd).\] 
\item[{\bfseries\upshape (ii)}]
If  $\varphi$ satisfies $\mathbf{(S)}$,  then 
\[B^s_{\infty,q}(\rd) \hookrightarrow {\mathcal N}^{s}_{\varphi,p,q}(\rd)  .\] 
\end{itemize}
Hence if $\varphi$ satisfies $\mathbf{(I)}$ and $\mathbf{(S)}$, i.e., $\varphi\sim 1$, then
\[ {\mathcal N}^{s}_{\varphi,p,q}(\rd) = B^s_{\infty,q}(\rd)
\] 
(in the sense of equivalent norms).
\end{corollary}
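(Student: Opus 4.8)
The plan is to deduce both embeddings as special cases of the function space embedding theorem, Theorem~\ref{fs}, by taking one of the two weight functions to be the constant $\varphi_0\equiv 1$. First I would record two elementary facts. The constant function $\varphi_0\equiv 1$ belongs to $\Gp$ for every $0<p<\infty$ (it is nondecreasing, and $t\mapsto t^{-\nd/p}$ is nonincreasing), and by \eqref{M1p} we have $\mathcal{M}_{\varphi_0,p}(\rd)=L_\infty(\rd)$ with equivalent norms; inserting this into Definition~\ref{def-spaces} gives
\[
{\mathcal N}^{s}_{\varphi_0,p,q}(\rd)=B^s_{\infty,q}(\rd)\qquad\text{(equivalent norms)},
\]
since the right-hand side is, by definition, the Besov space built on $L_\infty(\rd)$. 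Secondly, both $\mathbf{(I)}$ and $\mathbf{(S)}$ are invariant under replacing $\varphi$ by $\varphi/\varphi(1)$ (note $\varphi(1)>0$ since $\varphi\in\Gp$ and $\varphi\not\equiv0$), and this replacement does not change $\mathcal{N}^{s}_{\varphi,p,q}(\rd)$ up to equivalent (in fact, proportional) norms; hence I may assume the normalisation $\varphi(1)=1$ required in Theorem~\ref{fs}.

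For part~(i) I would apply Theorem~\ref{fs} with $\varphi_1=\varphi$, $\varphi_2=\varphi_0\equiv 1$, $s_1=s_2=s$, $p_1=p_2=p$, $q_1=q_2=q$. Then $\varrho=\min(1,1)=1$ and $\tfrac{1}{q^*}=(\tfrac1q-\tfrac1q)_+=0$, so $q^*=\infty$. Condition~\eqref{cond0} reads $\sup_{\nu\le 0}\varphi(2^{-\nu})^{-1}<\infty$, which holds since $\varphi$ is nondecreasing with $\varphi(1)=1$, hence $\varphi(2^{-\nu})\ge 1$ for $\nu\le 0$. The sequence $\alpha_j=\sup_{\nu\le j}\varphi(2^{-\nu})^{-1}=\varphi(2^{-j})^{-1}$ is bounded by $\bigl(\inf_{t>0}\varphi(t)\bigr)^{-1}<\infty$ by $\mathbf{(I)}$; since $\varrho=1$ and $s_1=s_2$, the sequence appearing in \eqref{cond2} is just $\{\alpha_j\}_j$, so \eqref{cond2} (that is, $\{\alpha_j\}_j\in\ell_\infty$) holds. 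Theorem~\ref{fs} then yields ${\mathcal N}^{s}_{\varphi,p,q}(\rd)\hookrightarrow{\mathcal N}^{s}_{\varphi_0,p,q}(\rd)=B^s_{\infty,q}(\rd)$.

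For part~(ii) I would swap the roles, applying Theorem~\ref{fs} with $\varphi_1=\varphi_0\equiv 1$, $\varphi_2=\varphi$ and the same $s,p,q$; again $\varrho=1$ and $q^*=\infty$. Now \eqref{cond0} becomes $\sup_{\nu\le 0}\varphi(2^{-\nu})<\infty$, which is immediate from $\mathbf{(S)}$, and $\alpha_j=\sup_{\nu\le j}\varphi(2^{-\nu})\le\sup_{t>0}\varphi(t)<\infty$, so \eqref{cond2} holds as well; thus $B^s_{\infty,q}(\rd)={\mathcal N}^{s}_{\varphi_0,p,q}(\rd)\hookrightarrow{\mathcal N}^{s}_{\varphi,p,q}(\rd)$. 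Finally, if $\varphi$ satisfies both $\mathbf{(I)}$ and $\mathbf{(S)}$, then $0<\inf_{t>0}\varphi(t)\le\sup_{t>0}\varphi(t)<\infty$, i.e.\ $\varphi\sim1$, and combining (i) and (ii) gives ${\mathcal N}^{s}_{\varphi,p,q}(\rd)=B^s_{\infty,q}(\rd)$ (this can also be read off from Corollary~\ref{diff_spaces2}). I expect essentially no obstacle: everything reduces to direct substitution into Theorem~\ref{fs}. The only points that need a line of justification are the identification ${\mathcal N}^{s}_{\varphi_0,p,q}(\rd)=B^s_{\infty,q}(\rd)$ via \eqref{M1p} and the harmless normalisation $\varphi(1)=1$.
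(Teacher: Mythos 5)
Your proof is correct, and for part (i) it coincides with the paper's argument: apply Theorem~\ref{fs} with $\varphi_1=\varphi$, $\varphi_2\equiv 1$ and identify ${\mathcal N}^{s}_{\varphi_0,p,q}(\rd)=B^s_{\infty,q}(\rd)$ via \eqref{M1p}. For part (ii) you take a slightly different route: you run Theorem~\ref{fs} a second time with the roles of the weights swapped and verify \eqref{cond0}--\eqref{cond2} again, whereas the paper simply invokes the elementary Morrey-level embedding $L_\infty(\rd)\hookrightarrow \M(\rd)$ (valid under $\mathbf{(S)}$) and reads off the Besov--Morrey embedding directly from the monotonicity of the quasi-norm in Definition~\ref{def-spaces}; the paper's version of (ii) is thus independent of the wavelet machinery behind Theorem~\ref{fs}, while yours is more uniform across the two parts. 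Your attention to the normalisation $\varphi(1)=1$ required by Theorem~\ref{fs} is a point the paper leaves implicit, and is handled correctly.
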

\begin{proof}
	The first statement  is also a consequence of Theorem \ref{fs} with $\varphi_1(t)= \varphi(t)$ and $\varphi_2(t) = \varphi_0(t)\equiv 1$, since $\mathcal{M}_{\varphi_0, p}(\rd) = L_{\infty}(\rd)$, recall \eqref{M1p}.  Moreover if $\sup_{t>0} \varphi(t)<\infty$, then $L_{\infty}(\rd)\hookrightarrow  \mathcal{M}_{\varphi, p}(\rd)$. This implies the second embedding. 
\end{proof}	

We conclude our paper with a closer look on the consequences of $\mathbf{(I)}$ and $\mathbf{(S)}$ for the standard embedding 
\begin{equation} \label{embed1fs'}
{\mathcal N}^{s_1}_{\varphi_1,p_1,q_1}(\rd) \hookrightarrow {\mathcal N}^{s_2}_{\varphi_2,p_2,q_2}(\rd).
\end{equation}

\begin{corollary}
Let $s_i\in\rr$, $0<p_i<\infty$, $0<q_i\leq \infty$, and $\varphi_i\in {\mathcal G}_{p_i}$, for $i=1,2$. 
We assume without loss of generality that $\varphi_1(1)=\varphi_2(1)=1$.
\bit
\item[{\bfseries\upshape (i)}]
  Assume that $\varphi_1$ satisfies $\mathbf{(I)}$. Then  \eqref{embed1fs'}  {holds} if and only if \eqref{cond0} is satisfied and $\ \{2^{j(s_2-s_1)}\}_j \in \ell_{q^*}$.
\item[{\bfseries\upshape (ii)}]
Assume that $\varphi_2$ satisfies $\mathbf{(I)}$ while $\varphi_1$ does not. Then  \eqref{embed1fs'}  {holds} if and only if \eqref{cond0} is satisfied and $\ \{2^{j(s_2-s_1)} \varphi_1(2^{-j})^{-1}\}_j \in \ell_{q^*}$.
\item[{\bfseries\upshape (iii)}]
Assume that $\varphi_2$ satisfies $\mathbf{(S)}$ while $\varphi_1$ does not satisfy $\mathbf{(I)}$. Then  \eqref{embed1fs'}  {holds} if and only if \eqref{cond2} holds. In particular   \eqref{embed1fs'} {holds} if  $\ \{2^{j(s_2-s_1)} \varphi_1(2^{-j})^{-1}\}_j \in \ell_{q^*}$.
\item[{\bfseries\upshape (iv)}]
  Assume that $\varphi_1$ satisfies $\mathbf{(S)}$. Then  \eqref{embed1fs'} {holds} if and only if also $\varphi_2$ satisfies $\mathbf{(S)}$ and 
  \eqref{cond2} holds.
    \eit
\end{corollary}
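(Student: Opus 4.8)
The plan is to derive the whole statement from Theorem~\ref{fs}: that theorem asserts that \eqref{embed1fs'} holds if and only if both \eqref{cond0} and \eqref{cond2} are satisfied, and the task reduces to simplifying these two conditions under the hypotheses $\mathbf{(I)}$/$\mathbf{(S)}$ on $\varphi_1$, $\varphi_2$, using the notation $\varrho=\min(1,p_1/p_2)$ and $\alpha_j=\sup_{\nu\le j}\varphi_2(2^{-\nu})\varphi_1(2^{-\nu})^{-\varrho}$ of Theorem~\ref{main}. Before treating the cases I would record the elementary facts that follow from $\varphi_i\in\mathcal{G}_{p_i}$ together with the normalisation $\varphi_i(1)=1$: since $\varphi_i$ is nondecreasing, $\varphi_i(t)\le 1$ for $0<t\le1$ and $\varphi_i(t)\ge1$ for $t\ge1$; if $\varphi_i$ satisfies $\mathbf{(I)}$ then $\varphi_i(t)\sim1$ for $0<t\le1$ (the infimum being $\lim_{t\to0^+}\varphi_i(t)$), and if $\varphi_i$ satisfies $\mathbf{(S)}$ then $\varphi_i(t)\sim1$ for $t\ge1$ (the supremum being $\lim_{t\to\infty}\varphi_i(t)=\sup_{t>0}\varphi_i(t)$ by monotonicity). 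Also $\varphi_1\in\mathcal{G}_{p_1}$ gives $\varphi_1(2^{-j})\ge 2^{-j\nd/p_1}>0$, so all the quotients below are well defined, and $\alpha_j$ is nondecreasing in $j$ with $\alpha_j\ge\alpha_0\ge\varphi_2(1)/\varphi_1(1)^\varrho=1$.

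For part (i), $\mathbf{(I)}$ on $\varphi_1$ gives $\varphi_1(2^{-j})\sim1$ for $j\ge0$, hence $\varphi_1(2^{-j})^{\varrho-1}\sim1$; moreover, splitting the supremum defining $\alpha_j$ at $\nu=0$, the part $\nu\le0$ is finite by \eqref{cond0} and the part $0\le\nu\le j$ is $\lesssim\sup_{t\le1}\varphi_2(t)\cdot\inf_{t\le1}\varphi_1(t)^{-\varrho}\lesssim1$, so $1\le\alpha_j\lesssim1$. Thus \eqref{cond2} reduces to $\{2^{j(s_2-s_1)}\}_j\in\ell_{q^*}$, which is the claim. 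For part (ii), $\mathbf{(I)}$ on $\varphi_2$ gives $\varphi_2(2^{-\nu})\sim1$ for $\nu\ge0$, so by monotonicity of $\varphi_1$ the supremum over $0\le\nu\le j$ equals, up to constants, $\varphi_1(2^{-j})^{-\varrho}$, which tends to $\infty$ since $\varphi_1$ fails $\mathbf{(I)}$; combined with the finite $\nu\le0$ contribution from \eqref{cond0}, one gets $\alpha_j\sim\varphi_1(2^{-j})^{-\varrho}$ for all large $j$, hence $\alpha_j\varphi_1(2^{-j})^{\varrho-1}\sim\varphi_1(2^{-j})^{-1}$ eventually. Since changing finitely many finite terms does not affect membership in $\ell_{q^*}$, \eqref{cond2} becomes $\{2^{j(s_2-s_1)}\varphi_1(2^{-j})^{-1}\}_j\in\ell_{q^*}$.

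For part (iii), $\mathbf{(S)}$ on $\varphi_2$ bounds both \eqref{cond0} and the $\nu\le0$ part of $\alpha_j$ by $\sup_{t>0}\varphi_2(t)<\infty$, so \eqref{cond0} is automatic and the embedding is equivalent to \eqref{cond2} alone; for the "in particular" part I would use $\alpha_j\le\max\bigl(\sup_{t>0}\varphi_2(t),\varphi_1(2^{-j})^{-\varrho}\bigr)$ and, since $\varrho\le1$ and $\varphi_1(2^{-j})\le1$, that $\alpha_j\varphi_1(2^{-j})^{\varrho-1}\lesssim\varphi_1(2^{-j})^{-1}$ for large $j$, so $\{2^{j(s_2-s_1)}\varphi_1(2^{-j})^{-1}\}_j\in\ell_{q^*}$ forces \eqref{cond2}. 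For part (iv), $\mathbf{(S)}$ on $\varphi_1$ gives $\varphi_1(2^{-\nu})^\varrho\sim1$ for $\nu\le0$, so \eqref{cond0} reads $\sup_{\nu\le0}\varphi_2(2^{-\nu})=\sup_{t>0}\varphi_2(t)<\infty$, i.e.\ exactly $\mathbf{(S)}$ for $\varphi_2$; conversely $\mathbf{(S)}$ on $\varphi_2$ together with $\varphi_1(2^{-\nu})\ge1$ for $\nu\le0$ makes \eqref{cond0} hold trivially. Hence the embedding is equivalent to \eqref{cond0}$\wedge$\eqref{cond2}, which in turn is equivalent to "$\varphi_2$ satisfies $\mathbf{(S)}$ and \eqref{cond2} holds".

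The only non-routine point — the rest being the bookkeeping just indicated — is the asymptotic identification of $\alpha_j$ in parts (ii) and (iii): one must split the defining supremum at $\nu=0$, exploit the monotonicity of $\varphi_1,\varphi_2$ on the overlap $0\le\nu\le j$, and then observe that the finitely many "head" indices $j$ where the $\nu\le0$ contribution still dominates are irrelevant for membership in $\ell_{q^*}$ (immediate for $q^*<\infty$, and, since all entries are finite, also for $q^*=\infty$).
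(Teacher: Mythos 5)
Your proposal is correct and follows essentially the same route as the paper: everything is reduced to Theorem~\ref{fs}, and the conditions \eqref{cond0}, \eqref{cond2} are simplified case by case by estimating $\alpha_j$ (splitting the supremum at $\nu=0$ and using the monotonicity of $\varphi_1,\varphi_2$ together with the normalisation $\varphi_i(1)=1$), exactly as in the paper's argument. Your treatment is somewhat more explicit about the asymptotics $\alpha_j\sim\varphi_1(2^{-j})^{-\varrho}$ and the irrelevance of finitely many head terms for $\ell_{q^*}$-membership, but the underlying ideas coincide.
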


\begin{proof}
  We begin with (i). In view of Theorem~\ref{fs} it remains to verify that $\mathbf{(I)}$ for $\varphi_1$ together with $\ \{2^{j(s_2-s_1)  }\}_j \in \ell_{q^*}$ is equivalent to \eqref{cond2}. However, since $\varphi_2$ is nondecreasing and $\varphi_1$ satisfies $\mathbf{(I)}$, we get that $1\leq \alpha_j \leq c\ \alpha_0$ and $\varphi_1(2^{-j})^{\varrho-1} \leq c'$ such that \eqref{cond2} follows.

  Next we deal with (ii). This time we need to show that the assumptions on $\varphi_2$ and $\ \{2^{j(s_2-s_1)} \varphi_1(2^{-j})^{-1}\}_j \in \ell_{q^*}$  is equivalent to \eqref{cond2}. But using the boundedness of $\varphi_2$ and the monotonicity of $\varphi_1$ we obtain $c \varphi_1(2^{-j})^{-\varrho}\le \alpha_j \leq \varphi_1(2^{-j})^{-\varrho}$ for sufficiently large $j$ since $\varphi_1$ does not satisfy $\mathbf{(I)}$. But this together with our assumption leads to \eqref{cond2}.

  First observe that the assumed boundedness of $\varphi_2$ from above in (iii) together with the boundedness of $\varphi_1$ from below already imply \eqref{cond0}. The boundedness of $\alpha_j \leq \varphi_1(2^{-j})^{-\varrho}$ follows in the same way as in (ii).
  
It remains to deal with (iv). If $\varphi_2$ satisfies $\mathbf{(S)}$, then \eqref{cond0} is a consequence of $\varphi_1(2^{-\nu})\geq \varphi_1(1)=1$ for $\nu\leq 0$. The rest follows by Theorem~\ref{fs}.
\end{proof}

\bibliographystyle{alpha}	

\begin{thebibliography}{NTS16}
\bibitem{AGNS}
A.~Akbulut, V.S.~Guliyev, T.~Noi, Y.~Sawano, 
\newblock  Generalized Morrey spaces-revisited, 
\newblock  {\em Z. Anal. Anwend. } 36 (2017), 17-35. 


\bibitem{Dau2}
I.~Daubechies,
\newblock{\em Ten Lectures on Wavelets},
 \newblock  {CBMS-NSF Regional Conference Series in Applied Mathematics}, vol.61, 
   \newblock{SIAM, Philadelphia, 1992. }

   
\bibitem{FHS} 
 G.~Di Fazio, D.I.~Hakim,  Y.~Sawano, 
 \newblock Elliptic equations with discontinuous
coefficients in generalized Morrey spaces,
\newblock  {\em Eur. J. Math.}  3 (2017),  728-762.
 
\bibitem{FP}
L.C.F.~Ferreira, M.~Postigo, 
\newblock Global well-posedness and asymptotic behavior in Besov-Morrey spaces for chemotaxis-Navier-Stokes fluids,
{\em  J. Math. Phys.}  60 (2019), no. 6, 061502, 19 pp. 
    

\bibitem{GHLM17}
H.~Gunawan, D.I.~Hakim, K.M.~Limanta, A.A.~Masta,
\newblock Inclusion properties of generalized Morrey spaces.
\newblock {\em Math. Nachr.} 290 (2017), no. 2-3, 332--340. 

\bibitem{HST}
D.D.~Haroske, P.~Skandera, H. Triebel, 
\newblock{An approach to wavelet isomorphisms of function spaces via atomic representations,}
    \newblock{{\em J. Fourier Anal. Appl. }24 (2018), 830-871}. 

\bibitem{hs12}
D.D. Haroske, L. Skrzypczak,
Continuous embeddings of Besov-Morrey function spaces,
{\em Acta Math. Sin. (Engl. Ser.) } 28 (2012), 1307-1328.

\bibitem{hs13}
D.D. Haroske, L. Skrzypczak,  
Embeddings of Besov-Morrey spaces on
bounded domains,
{\em Studia Math.} 218 (2013), 119-144.


\bibitem{HW96}
\newblock{E.~Hern{\'a}ndez, G.~Weiss,} 
\newblock{\em A First Course on Wavelets},
 \newblock{CRC Press, Boca Raton,  1996.}
   
\bibitem{IN19}  
 M.~Izuki, T.~Noi,
\newblock  Generalized Besov-Morrey spaces and generalized Triebel-Lizorkin-Morrey spaces on domains, 
\newblock {\em Math. Nachr.}  292 (2019),  2212-2251.
    

\bibitem{KMR}
V.~Kokilashvili, A.~Meskhi, H.~Rafeiro, 
\newblock Estimates for nondivergence elliptic
equations with VMO coefficients in generalized grand Morrey spaces, 
\newblock {\em Complex Var. Elliptic Equ.} 59 (2014), no. 8, 1169-1184.

\bibitem{KY}
H.~Kozono, M.~Yamazaki,
Semilinear heat equations and the {N}avier-{S}tokes equation with distributions in new function spaces as initial data,
{\em Comm. Partial Differential Equations} 19 (1994), 959-1014.
%


%
\bibitem{KNS}
K.~Kurata, S.~Nishigaki, S.~Sugano, 
\newblock Boundedness of integral operators on
generalized Morrey spaces and its application to Schr\"odinger operators, 
\newblock {\em Proc. Amer. Math. Soc.} 128 (2000), 1125-1134.

%
%
\bibitem{LSUYY13} 
Y.~Liang, Y. Sawano, T. Ullrich, D.~Yang,  W.~Yuan, 
A new framework for generalized Besov-type and Triebel-Lizorkin-type spaces. {\em Dissertationes Math.} 489 (2013), 114 pp.
%
%
\bibitem{Maz03}
A.L.~Mazzucato,
\newblock Besov-{M}orrey spaces: function space theory and applications to non-linear {PDE},
\newblock {\em Trans. Amer. Math. Soc. } 355 (2003), 1297--1364. 


\bibitem{mi}
T. Mizuhara, 
\newblock Boundedness of some classical operators on generalized Morrey
spaces.
\newblock Harmonic analysis (Sendai, 1990), 183-189, ICM-90 Satell. Conf. Proc.,
Springer, Tokyo, 1991.


\bibitem{Nak94}
E.~Nakai, 
\newblock Hardy-Littlewood maximal operator, singular integral operators and the Riesz potentials on generalized Morrey spaces,
\newblock {\em Math. Nachr.} 166 (1994),  95--103.


\bibitem{NNS16}
S.~Nakamura, T.~Noi, Y. Sawano,
\newblock Generalized Morrey spaces and trace operator,
\newblock {\em Sci. China Math.} 59 (2016), no. 2, 281--336.

\bibitem{Net}
 Yu.V.~Netrusov,
 \newblock Some imbedding theorems for spaces of Besov-Morrey type. (Russian) 
 \newblock Zap. Nauchn. Sem. Leningrad. Otdel. Mat. Inst. Steklov. (LOMI) 139 (1984), 139-147.


\bibitem{piccinini-1}
L.C. Piccinini,
\newblock {\em {Inclusioni tra spazi di
  Morrey}}.
\newblock {\em Boll. Un. Mat. It. (4)}, 2 (1969), 95--99.

 
\bibitem{rosenthal}
M.~Rosenthal,
\newblock {Morrey-R\"aume aus der Sicht der harmonischen Analysis}.
\newblock Master's thesis, Friedrich-Schiller-Universit\"at Jena, Germany,
  2009.

\bibitem{MR-1}
M.~Rosenthal,
\newblock {Local means, wavelet bases, representations, and isomorphisms in Besov-Morrey and Triebel-Lizorkin-Morrey spaces.}
\newblock {\em Math. Nachr.} 286 (2013), no. 1, 59--87.

 
\bibitem{Saw2}
Y. Sawano, 
\newblock {Wavelet characterizations of Besov-Morrey and
  Triebel-Lizorkin-Morrey spaces},  
{\em Funct. Approx. Comment. Math.} 38 (2008), 93--107.

 
\bibitem{Saw18}
Y. Sawano, 
\newblock A thought on generalized Morrey spaces,
\newblock {\em   J. Indones. Math. Soc.} 25 (2019), no. 3, 210--281.

\bibitem{s011}
W.~Sickel, 
\newblock Smoothness spaces related to Morrey spaces -- a survey. I, 
\newblock {\em Eurasian Math. J.} 3 (2012), 110--149.

\bibitem{s011a}
W.~Sickel, 
\newblock Smoothness spaces related to Morrey spaces -- a survey. II, 
\newblock {\em Eurasian Math.} J. 4 (2013), 82--124.


\bibitem{Tri13} 
H.~Triebel,
\newblock{\em Local Function Spaces, Heat and Navier-Stokes Equations}, EMS Tracts in Mathematics 20, 
\newblock European Mathematical Society, 2013.
%

\bibitem{Tri11}
H.~Triebel, 
\newblock Morrey-Campanato spaces and their smooth relatives,
\newblock  unpublished notes, 2011.

\bibitem{WNTZ}
N.~Wei, P.C.~Niu, S.~Tang, M.~Zhu, 
\newblock Estimates in generalized Morrey spaces
for nondivergence degenerate elliptic operators with discontinuous coefficients,
\newblock {\em Rev. R. Acad. Cienc. Exactas Fis. Nat. Ser. A Math. RACSAM} 106 (2012),  1-33.

\bibitem {Woj} 
{P.~Wojtaszczyk,}
\newblock {\em  A Mathematical Introduction to Wavelets,}
 \newblock{Cambridge Univ. Press, Cambridge, 1997}.


\bibitem{YFS}
M.~Yang, Z.~Fu, J.~Sun,
\newblock Existence and large time behaviour to coupled chemotaxis-fluid equations in Besov-Morrey spaces,
\newblock {\em J. Differential Equations} 266 (2019), no. 9,  5867-5894.


%

\bibitem{YSY10}
W.~Yuan, W.~Sickel, D.~Yang,
\newblock {\em Morrey and Campanato meet Besov, Lizorkin and Triebel},
\newblock Lecture Notes in Mathematics 2005, Springer-Verlag, Berlin, 2010.


\bibitem{ZJSZ}
L.~Zhang, Y.~Jiang, Y.~Sheng, J.~Zhou, 
\newblock Parabolic equations with VMO coefficients
in generalized Morrey spaces, 
\newblock {\em Acta Math. Sin.} 26 (2010), no. 1, 117-130.
\end{thebibliography}
\def\cprime{$'$}

\bigskip

\noindent Dorothee D. Haroske\\
\noindent  Institute of Mathematics,
Friedrich Schiller University Jena, 07737 Jena, Germany\\
\noindent {\it E-mail}:  \texttt{dorothee.haroske@uni-jena.de}

\bigskip

\noindent Susana D. Moura\\
\noindent  University of Coimbra, CMUC, Department of Mathematics,  EC Santa Cruz, 3001-501 Coimbra, Portugal\\
\noindent {\it E-mail}:  \texttt{smpsd@mat.uc.pt}

\bigskip

\noindent Leszek Skrzypczak\\
\noindent  Faculty of Mathematics and Computer Science,
Adam Mickiewicz University, Ul. Uniwersytetu Pozna\'nskiego 4, 61-614 Pozna\'n,
Poland\\
\noindent {\it E-mail}:  \texttt{lskrzyp@amu.edu.pl}

\end{document}